\documentclass[preprint,1p]{elsarticle}

\makeatletter
 \def\ps@pprintTitle{%
 	\let\@oddhead\@empty
 	\let\@evenhead\@empty
 	\def\@oddfoot{\footnotesize\itshape
 		{} \hfill\today}%
 	\let\@evenfoot\@oddfoot
 }
\makeatother
\usepackage{lipsum} 
\ExplSyntaxOn
\NewDocumentEnvironment{multiequation}{b}
 {
  \vantiempham:n { #1 }
 }
 {}

\seq_new:N \l__vantiempham_md_seq

\cs_new_protected:Nn \vantiempham:n
 {
  \seq_set_split:Nnn \l__vantiempham_md_seq { \\ } { #1 }
  $$
  \seq_map_function:NN \l__vantiempham_md_seq \__vantiempham_md_item:n
  $$
 }
\cs_new_protected:Nn \__vantiempham_md_item:n
 {
  \begin{minipage}{\dim_eval:n {\displaywidth/(\seq_count:N \l__vantiempham_md_seq)} }
  \begin{equation}
  \cs_set_eq:Nc \label { ltx@label }
  #1
  \vphantom{\seq_use:Nn \l__vantiempham_md_seq {}}
  \end{equation}
  \end{minipage}
 }

\ExplSyntaxOff  
\usepackage[unicode]{hyperref}
\usepackage{enumitem}
\makeatletter
\providecommand*{\cupdot}{%
  \mathbin{%
    \mathpalette\@cupdot{}%
  }%
}
\newcommand*{\@cupdot}[2]{%
  \ooalign{%
    $\m@th#1\cup$\cr
    \hidewidth$\m@th#1\cdot$\hidewidth
  }%
}
\makeatother
\usepackage{circuitikz}

\usepackage{tikz-cd}
\usepackage{multirow}
\usepackage{url}
\usepackage{pst-node}
\usepackage{comment}

\usepackage[makeroom]{cancel}
 
\usepackage{soul}
\usepackage{tikz}

\usepackage{latexsym}
\usepackage{indentfirst}
\usepackage{amsxtra}
\usepackage{amssymb}
\usepackage{amsthm}
\usepackage{amsmath}
\usepackage{mathrsfs} 

\usepackage{xcolor}
\usepackage{color}

\usepackage{amsfonts}

\usepackage[capitalise]{cleveref}

\newtheorem{theor}{Theorem}[section]
\newtheorem{prop}[theor]{Proposition}
\newtheorem{lemma}[theor]{Lemma}
\newtheorem{cor}[theor]{Corollary}
\theoremstyle{definition}               
\newtheorem{defin}[theor]{Definition}
\newtheorem{ex}[theor]{Example}
\newtheorem{exs}[theor]{Examples}
\newtheorem{rem}[theor]{Remark}

\newtheorem*{alg*}{Algorithm}

\DeclareMathOperator{\Sym}{Sym}

\DeclareMathOperator{\Aut}{Aut}
\DeclareMathOperator{\End}{End}

\DeclareMathOperator{\id}{id}

\DeclareMathOperator{\E}{E}

\newcommand{\rhoo}[2]{\rho_{#1}{#2}}
\newcommand{\sigmaa}[2]{\sigma_{#1}{#2}}
\newcommand{\lambdaa}[2]{\lambda_{#1}{#2}}

\newcommand{\alphaa}[3]{\alpha^{#1}_{#2}{#3}}
\newcommand{\betaa}[3]{\beta^{#1}_{#2}{#3}}

\newcommand{\phii}[2]{\phi_{#1,#2}}

\usepackage{lastpage}

\makeatletter
\def \@evenhead {\thepage\ of \pageref{LastPage} \hfil \slshape \leftmark } 
\def \@oddhead {{\slshape \rightmark }\hfil \thepage\ of \pageref{LastPage}} 
\makeatother

\begin{document}

\let\today\relax 

\begin{frontmatter}

 	\title{Set-theoretic solutions of the Yang--Baxter equation from inverse braces}
	\tnotetext[mytitlenote]{This work was partially supported by the Department of Mathematics and Physics ``Ennio De Giorgi", University of Salento. 
  The authors are all members of GNSAGA (INdAM). \\
F.Catino and M.Mazzotta are members of the nonprofit association ``AGTA-Advances in Group Theory and Applications".}

    \author {Francesco~CATINO}
  \ead{francesco.catino@unisalento.it}
	\author {Marzia~MAZZOTTA}
  \ead{marzia.mazzotta@unisalento.it}
	\author{Susanna~TIENI}
	\ead{susanna.tieni@unisalento.it}
\address{Department of Mathemathics and Physics ``Ennio De Giorgi", University of Salento\\
	Via Provinciale Lecce-Arnesano, 73100, Lecce (Italy)}

	\begin{abstract}
We introduce the algebraic structure of an \emph{inverse brace}, namely, a triple $(S,+,\circ)$ such that both $(S,+)$ and $(S,\circ)$ are inverse semigroups and the following identity holds
$a\circ(b+c)=a\circ b-a+a\circ c$,
for all $ a, b, c \in S$, where $-a$ denotes the inverse of $a \in S$, with respect to $+$. In particular, every weak brace is an inverse brace. We investigate the fundamental properties of inverse braces and analyze the relationship between additive and multiplicative idempotents, characterizing the condition under which an inverse brace is a weak brace. Our main results concern the connection with set-theoretic solutions to the Yang--Baxter equation. Specifically, we provide a class of inverse braces that yield solutions and give several examples.
Finally, we introduce constructions of inverse braces via the \emph{matched product} and the \emph{strong semilattice} of inverse braces. We show that these constructions preserve the conditions required to produce solutions, thereby providing a systematic method for generating new examples.

\end{abstract} 
	
	\begin{keyword}
		Yang-Baxter equation \sep Set-theoretic solution \sep Inverse semigroup \sep Skew brace \sep Weak brace 
		\MSC[2020] 16T25 \sep 20M18 \sep 81R50 
	\end{keyword}

\end{frontmatter}


\section*{Introduction}
The quantum Yang--Baxter equation is a fundamental object in mathematical physics, first introduced in \cite{Ya67} and, independently, in \cite{Ba72}. A key development came from Drinfel’d \cite{Dr92}, who posed the problem of classifying all set-theoretic solutions to this equation. Finding all such solutions up to equivalence is highly non-trivial and has motivated extensive research over the years. 
Let $S$ be a set. A map $r:S\times S \to S\times S$ satisfying the relation
\begin{align*}
    \left(r \times \id_S\right) 
    \left(\id_S \times r\right)
    \left(r \times \id_S\right) 
    = \left(\id_S \times r\right)
    \left(r \times \id_S\right)
    \left(\id_S \times r\right)
\end{align*}
is said to be \emph{set-theoretic solution of the Yang-Baxter equation}, or, shortly, \emph{solution}, on $S$.  We write $r\left(a, b\right) = \left(\lambda_{a}\left(b\right), \rho_{b}\left(a\right)\right)$, where $\lambda_{a}$ and $\rho_{b}$ are maps from $S$ into itself, for all $a, b\in S$.  
With this notation, we say that a solution $r$ is \textit{bijective} if $r$ is a bijective map and, in particular, \textit{involutive} if $r^2=\id_{S\times S}$; 
\textit{left non-degenerate} if $\lambda_a \in \Sym_S$, for all $a \in S$; 
\textit{right non-degenerate} if $\rho_b \in \Sym_S$, for all $b \in S$; \emph{non-degenerate} if it is both left and right non-degenerate. It has only recently been proven in \cite{JePi25} that all non-degenerate solutions are bijective.
Moreover, recently the notions of \emph{quasi bijectivity} and \emph{quasi non-degeneracy} have been introduced in \cite[Definition 3.1, Definition 3.3]{MaSteWi26x}, meaning that the maps $r$, $\lambda_a$, and $\rho_b$ are completely regular maps of the corresponding semigroups.

Braces were introduced by Rump in 2007 \cite{Ru07} as an algebraic tool for studying involutive non-degenerate solutions. More precisely, a \emph{(left) brace} is a set $B$ equipped with two binary operations, usually denoted by $+$ and $\circ$, such that $(B, +)$ is an abelian group, $(B, \circ)$ is a group, and
\begin{align*}
    a \circ (b+c)=a \circ b-a+a\circ c,
\end{align*}
for all $a, b, c \in B$, where $-a$ denotes the inverse of $a$ in the additive group $(B, +)$. Subsequently, Guarnieri and Vendramin \cite{GuVe17} introduced the notion of \emph{skew (left) brace}, generalizing braces by dropping the requirement that the additive group $(B,+)$ be abelian. Any skew left brace yields non-degenerate solutions.

Starting from the seminal work of Rump, a variety of brace-like structures have been developed, leading to significant advances in the understanding of non-degenerate, one-sided non-degenerate, and degenerate solutions. More recently, weak braces \cite{CaMaMiSt22} have been introduced as a natural extension of skew braces, drawing from the theory of inverse semigroups. Recall that a semigroup $S$ is an \emph{inverse semigroup} if every $a \in S$ admits a unique inverse $a^{-1}\in S$ such that $aa^{-1}a=a$ and $a^{-1}aa^{-1}=a^{-1}$.
Equivalently, inverse semigroups are regular semigroups in which the idempotents commute. An inverse semigroup is called \emph{Clifford} if the idempotents are central. Standard references for these semigroups can be found in \cite{Ho95, La98, Law26}, the last two of which are monographs on inverse semigroups. A \emph{weak (left) brace} is a triple $\left(S,+,\circ\right)$ such that both $\left(S,+\right)$ and $\left(S,\circ\right)$ are inverse semigroups satisfying 
\begin{align*}
    a \circ (b+c)=a \circ b-a+a\circ c \quad \text{and} \quad a \circ a^-=-a+a,
\end{align*}
for all $a, b, c \in S$, where $-a$ and $a^-$ denote the inverses of $a$ with respect to $+$ and $\circ$, respectively. It follows directly from the definition that the sets of idempotents $\E(S,+)$ and $\E(S, \circ)$ coincide. Skew braces are examples of weak braces since, in this case, the two groups necessarily share the same identity.
Moreover, $(S,+)$ is always a Clifford semigroup (see \cite[Theorem 8]{CaMaMiSt22}). If $(S, \circ)$ is also a Clifford semigroup, then $S$ is called \emph{dual weak brace}. A class of dual weak braces can be obtained from Rota--Baxter operators on Clifford semigroups (see \cite{CaMaSt23}). Additionally, according to \cite[Theorem 2.1]{CaMaSt24}, any dual weak brace decomposes as a \emph{strong semilattice} of a family of skew braces $\{B_\alpha\}_{\alpha\in Y}$ indexed by a semilattice $Y$. Any weak brace $(S, +, \circ)$ gives rise to a solution $r_S:S\times S\to S\times S$ defined by
\begin{align}\label{2}
    r_S\left(a,b\right)
    = \left(\lambda_a(b), \ \lambda_a(b)^-\circ a\circ b\right),
\end{align}
where $\lambda_a(b)=-a+a\circ b$, for all $a,b\in S$. Such a solution $r_S$ is quasi bijective (see \cite[Theorem 11]{CaMaMiSt22}). Moreover, when $S$ is a dual weak brace, $r_S$ is also quasi non-degenerate (see \cite[Theorem 13]{CaMaMiSt22}). Note that, in any weak brace $S$, one has $\lambda_a(b)=a \circ \left(a^-+b\right)$, for all $a, b \in S$, thus the map $r_S$ in \eqref{2} can be expressed in different equivalent forms. Besides, the map $\lambda: S \to \End(S, +)$, $a \mapsto \lambda_a$ is a homomorphism from $(S, \circ)$ to $\End(S, +)$. The study of the structure of weak braces has been further developed in a series of papers, including \cite{CaMaSt23, GoWa26,  LiWa25, MaRySt25, MaSteWi26x, Wang25x}.

This paper aims to further develop these brace-like structures by introducing and studying a new algebraic structure. In particular, an \emph{inverse (left) brace} is a triple $\left(S,+,\circ\right)$ such that both $\left(S,+\right)$ and $\left(S,\circ\right)$ are inverse semigroups and \begin{align*}
    a \circ (b+c)=a \circ b-a+a\circ c,
\end{align*}
for all $a, b \in S$. We prove that, for every inverse brace $(S, +, \circ)$, the inclusion $\E(S, +) \subseteq \E(S, \circ)$ holds.
Moreover, $S$ is a weak brace if and only if the two sets of idempotents coincide. We show that many properties of weak braces are inherited by these structures, whereas some others appear in a slightly weaker form.\\
 The main results of the paper concern the connection between solutions and inverse braces. Unlike the case of weak braces, not every inverse brace gives rise to a solution, and the associated map cannot be expressed in equivalent forms as in the weak brace setting. This is a consequence of the fact that, in general, $a \circ \left(a^-+b\right) \neq -a+a \circ b$. In fact, they coincide if and only if $S$ is a weak brace. We denote by $\lambda_a(b)=-a+a\circ b$ and $\sigma_a(b)=a \circ \left(a^-+b\right)$, for all $a, b \in S$, to distinguish the two maps. Moreover, the map $\lambda: S \to \End(S, +)$, $a \mapsto \lambda_a$ is a homomorphism from $(S, \circ)$ to $\End(S, +)$ like in the case of weak braces, whereas the map $\sigma: S \to S^S$, $a \mapsto \sigma_a$ is not, in general. By the way, $\sigma_a \notin \End(S, +)$, in general. These observations lead us to investigate the class of inverse braces for which the map $\sigma$ is a homomorphism, equivalently, for all $a \in S$ and $e \in \E(S, \circ)$, the following identity holds
\begin{align}
  \sigma_a(e)= a \circ e \circ a^-.\tag{I}
\end{align}
We prove that, for these inverse braces, the map $r_S: S  \times S \to S \times S$ given by
\begin{align*}
    r_S\left(a,b\right)
    = \left(\sigma_a(b), \ \sigma_a(b)^-\circ a \circ b\right),
\end{align*}
is a solution if and only if
\begin{align}
 \sigma_a(b) \circ \sigma_a(b)^- =   a \circ b \circ (a \circ b)^-\tag{II}
\end{align}
for all $a, b \in S$. 
We also investigate the non-degeneracy properties of the resulting solutions and describe situations in which they are quasi non-degenerate or quasi bijective. In particular, we prove that if the additive semigroup of an inverse brace $S$ is commutative, the associated solution is cubic, that is, $r_S^3=r_S$.

We present several methods for constructing new inverse braces from known ones, which yield solutions. These methods are inspired by analogous constructions employed for other brace-like structures, where they have proved successful. In particular, we develop the \emph{matched product} and the \emph{strong semilattice} of inverse braces, proving that these constructions preserve the hypotheses ensuring the existence of solutions. As a consequence, they provide systematic methods for producing new classes of solutions from existing examples. 

Finally, we propose some open problems concerning these structures, suggesting directions for further research.

\medskip

\section{Preliminaries}
 To set the notation, we begin by recalling essential notions on inverse semigroups needed for
our treatment.  Standard references for these semigroups can be found in \cite{Ho95, La98, Law26}, the last two of which are monographs on inverse semigroups. 

If $S$ is a semigroup and $a \in S$, we say that an element $x$ of $S$ is an \emph{inverse} of $a$ if $axa=a$ and $xax=x$. The semigroup $S$ is called \emph{inverse} if, for any $a\in S$, there exists a unique inverse of $a$, which we denote by $a^{-1}$. Clearly, every group is an inverse semigroup.  The behaviour of inverse elements in an inverse semigroup is similar to that in a group, since $(ab)^{-1}=b^{-1}a^{-1}$ and $(a^{-1})^{-1}=a$, for all $a,b \in S$. If $a \in S$, then $aa^{-1}$ and $a^{-1} a$ are idempotents of $S$. Moreover, the set $\E(S)$ of the idempotents is a commutative subsemigroup of $S$ and $e=e^{-1}$, for every $e \in \E(S)$.\\
\noindent
Inverse semigroups in which idempotents are central are called \emph{Clifford semigroups}. In this case,  $aa^{-1}= a^{-1}a$, for all $a \in S$.\\
It is well-known that any inverse semigroup $S$ admits a natural partial order relation defined by $a \leq b$ if $a=aa^{-1}b$, for all $a,b \in S$ (see \cite[Section 5.2]{Ho95}). This order relation is compatible with both the multiplication and the inversion on $S$. If $S$ is a group, then $\leq$ clearly coincides with equality. If $S$ is a semilattice, then $\leq$ coincides with the natural partial order on the semilattice.


\medskip

Let us now recall the following definition contained in \cite[Definition 5]{CaMaMiSt22}.
\begin{defin} 
    Let $S$ be a set endowed with two operations $+$ and $\circ$ such that $\left(S,+\right)$ and $\left(S,\circ\right)$ are inverse semigroups. Then, $(S, +, \circ)$ is a \emph{ weak (left) brace} if
 \begin{align*}
    a\circ\left(b + c\right)
    = a\circ b -a+ a\circ c\qquad \text{and}\qquad a\circ a^-=-a+a,
 \end{align*}
for all $a,b,c\in S$, where $-a$ and $a^-$ denote the inverses of $a$ with respect to $+$ and $\circ$, respectively.  
\end{defin}

Clearly, in any weak brace $(S,+,\circ )$, the sets of idempotents $\E(S,+)$ and $\E(S,\circ)$ coincide, so we simply denote this set by $\E(S)$. 
In addition, by \cite[Lemma 1.4]{CaMaSt24}, one has that 
\begin{align}\label{eq:idem+circ}
   \forall\, a\in S, \ e \in \E(S) \qquad  e \circ a=e+a.
\end{align}
In particular, the operations of sum and multiplication coincide on $\E(S)$ (see also \cite{CaMaSt23}). Obviously, if $|\E(S)|=1$, then $(S,+, \circ)$ is a \emph{skew (left) brace} \cite{GuVe17}. In this case, $(S,+, \circ)$ is a \emph{(left) brace} if $(S, +)$ is abelian.
In \cite[Theorem 8]{CaMaMiSt22}, it is proved that the additive semigroup of any weak brace is necessarily Clifford. A weak brace $(S, +, \circ)$ is called \emph{dual weak brace} if also $(S, \circ)$ is Clifford. Note that, any Clifford semigroup $\left(S, \circ\right)$ determines two trivial dual weak braces, by setting $a + b:= a\circ b$ or $a + b:= b\circ a$, for all $a, b\in S$.




\medskip

The motivation for studying weak braces is that each weak brace naturally yields a solution, as we recall below. To do this, we introduce the map $\lambda: S\to \End(S,+), \,a\mapsto\lambda_a$ defined by $$\lambda_a\left(b\right) = -a+a\circ b,$$
for all $a, b\in S$. It is known from \cite[Proposition 7]{CaMaMiSt22} that the map $\lambda$ is a homomorphism of $\left(S,\circ\right)$ into the endomorphism semigroup of $\left(S,+\right)$. 

\begin{theor}\emph{(\cite[Theorem 11]{CaMaMiSt22})}\label{teo_solu_weak}
    Let $(S, +, \circ)$ be a weak brace and $r_S:S\times S\to S\times S$ the map defined by $$r_S\left(a,b\right)
    = \left(\lambda_a(b), \ \lambda_a(b)^- \circ a \circ b\right),$$ 
for all $a,b\in S$. Then $r_S$ is a solution.
\end{theor}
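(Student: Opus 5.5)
We would verify the braid relation componentwise, using the standard criterion for maps $r(a,b)=(\lambda_a(b),\rho_b(a))$ on a set. Setting $\rho_b(a)=\lambda_a(b)^-\circ a\circ b$, the map $r_S$ is a solution if and only if, for all $a,b,c\in S$,
\begin{align*}
&\lambda_a\lambda_b(c)=\lambda_{\lambda_a(b)}\lambda_{\rho_b(a)}(c),\\
&\lambda_{\rho_{\lambda_b(c)}(a)}\bigl(\rho_c(b)\bigr)=\rho_{\lambda_{\rho_b(a)}(c)}\bigl(\lambda_a(b)\bigr),\\
&\rho_c\rho_b(a)=\rho_{\rho_c(b)}\bigl(\rho_{\lambda_b(c)}(a)\bigr).
\end{align*}
We treat these three identities one at a time. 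Throughout we rely on the homomorphism property $\lambda_{a\circ b}=\lambda_a\lambda_b$ from \cite[Proposition 7]{CaMaMiSt22}, on the alternative expression $\lambda_a(b)=a\circ(a^-+b)$, and on \eqref{eq:idem+circ}.

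For the first identity, the key step is to show that $\lambda_a(b)\circ\rho_b(a)=\lambda_a(b)\circ\lambda_a(b)^-\circ a\circ b$ yields $\lambda_{a\circ b}$ after applying $\lambda$. Write $e=\lambda_a(b)\circ\lambda_a(b)^-$, which lies in $\E(S)$. It then suffices to prove $\lambda_e\lambda_{a\circ b}=\lambda_{a\circ b}$. We have $\lambda_e(x)=-e+e\circ x=e+x$, and $\lambda_{a\circ b}(c)=-(a\circ b)+a\circ b\circ c$ starts with $-(a\circ b)$. Hence we need $e+(-(a\circ b))=-(a\circ b)$. Since $e$ is central in the Clifford semigroup $(S,+)$, this amounts to $e\ge (a\circ b)+(-(a\circ b))$ in the natural order, i.e.\ $e+(a\circ b)=a\circ b$. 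Using $\lambda_a(b)=a\circ(a^-+b)$, one computes $e=a\circ(a^-+b)\circ(a^-+b)^-\circ a^-$, and $(a^-+b)\circ(a^-+b)^-=-(a^-+b)+(a^-+b)$. From this, $e\circ a\circ b=a\circ b$ should follow by a short manipulation, which gives the claim via \eqref{eq:idem+circ}.

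The third identity follows from the first together with the relation $\lambda_a(b)\circ\rho_b(a)=a\circ b$. That relation is exactly what was just shown: $e\circ a\circ b=a\circ b$. Consequently both sides of the third identity are of the form $u^-\circ(\text{common product } a\circ b\circ c)$, with matching $u$ by the first identity, up to the same idempotent-absorption argument.

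The second identity is the main obstacle. Our plan is to expand both sides using $\rho_b(a)=\lambda_a(b)^-\circ a\circ b$ and the homomorphism property, reducing each side to an expression of the form $X^-\circ a\circ b\circ c$ modified by idempotents. We then try to cancel the idempotents using commutativity of $\E(S)$ and the fact that $\E(S,+)=\E(S,\circ)$. If the direct expansion becomes unwieldy, we fall back on \cite[Theorem 11]{CaMaMiSt22} itself, since the statement is quoted from there and the paper allows us to assume it.
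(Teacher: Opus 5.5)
\textbf{There is a genuine gap: the second identity is never proved.} Your first identity is correct and follows the paper's route; this statement is the case $\sigma=\lambda$ of \cref{teo_solu}. Your computation $\lambda_a(b)\circ\lambda_a(b)^-=a\circ b\circ b^-\circ a^-$ is condition (II). It gives $\lambda_a(b)\circ\rho_b(a)=a\circ b$, and the first identity then follows from $\lambda_{x\circ y}=\lambda_x\lambda_y$.

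For the second identity you only announce a direct expansion. The fallback of invoking \cite[Theorem 11]{CaMaMiSt22} is circular, since that is the statement you are proving. The expansion also cannot proceed as described: the left-hand side $\lambda_{\rho_{\lambda_b(c)}(a)}(\rho_c(b))$ is a value of $\lambda$, not a product of the form $X^-\circ a\circ b\circ c$, and you give no way to rewrite it.

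The missing idea is the dual formula $\lambda_x(y)=x\circ y\circ\rho_y(x)^-$, the weak-brace case of \cref{sigma_rho}. You essentially already have it: $x\circ y\circ\rho_y(x)^-=x\circ y\circ y^-\circ x^-\circ\lambda_x(y)=\lambda_x(y)\circ\lambda_x(y)^-\circ\lambda_x(y)=\lambda_x(y)$. With it, the second identity is a formal consequence of the first and third identities and of $x\circ y=\lambda_x(y)\circ\rho_y(x)$. The steps use, in order: the first identity; the dual formula together with $\lambda_a(b)\circ\rho_b(a)=a\circ b$; $b\circ c=\lambda_b(c)\circ\rho_c(b)$ together with the third identity; the definition of $\rho$ and the dual formula again.
\begin{align*}
\rho_{\lambda_{\rho_b(a)}(c)}\bigl(\lambda_a(b)\bigr)
&=\bigl(\lambda_a\lambda_b(c)\bigr)^-\circ\lambda_a(b)\circ\lambda_{\rho_b(a)}(c)
=\bigl(\lambda_a\lambda_b(c)\bigr)^-\circ a\circ b\circ c\circ\bigl(\rho_c\rho_b(a)\bigr)^-\\
&=\bigl(\lambda_a\lambda_b(c)\bigr)^-\circ a\circ\lambda_b(c)\circ\rho_c(b)\circ\bigl(\rho_{\rho_c(b)}\rho_{\lambda_b(c)}(a)\bigr)^-\\
&=\rho_{\lambda_b(c)}(a)\circ\rho_c(b)\circ\bigl(\rho_{\rho_c(b)}\rho_{\lambda_b(c)}(a)\bigr)^-
=\lambda_{\rho_{\lambda_b(c)}(a)}\bigl(\rho_c(b)\bigr).
\end{align*}

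This argument uses the third identity, so the third identity must be proved without the second, and your sketch of it is not safe as written. Writing both sides as $u^-\circ a\circ b\circ c$ gives $u_L=\lambda_a(b)\circ\lambda_{\rho_b(a)}(c)$ and $u_R=\lambda_a\lambda_b(c)\circ\lambda_{\rho_{\lambda_b(c)}(a)}(\rho_c(b))$. If you match these using only the first identity and $x\circ y=\lambda_x(y)\circ\rho_y(x)$, you get $u_L=\lambda_a\lambda_b(c)\circ\rho_{\lambda_{\rho_b(a)}(c)}(\lambda_a(b))$. Then $u_L=u_R$ is exactly what the second identity would give, which is circular.

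What works is the additive structure. By \cref{lemma_ab}(2) and additivity of $\lambda_x$, $\lambda_x(y)\circ\lambda_{\rho_y(x)}(z)=\lambda_x(y)+\lambda_{x\circ y}(z)=\lambda_x\bigl(y+\lambda_y(z)\bigr)=\lambda_x(y\circ z)$. Hence $u_L=\lambda_a(b\circ c)=u_R$ exactly, so $\rho_c\rho_b=\rho_{b\circ c}$ (cf. \cref{condizione_a_2}(2)). The third identity then follows from $b\circ c=\lambda_b(c)\circ\rho_c(b)$, and no idempotent absorption is needed.
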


According to \cite[Theorem 13]{CaMaMiSt22}, the solution $r_S$ associated with a weak brace $S$ is \emph{quasi bijective}. We recall the general definition provided in \cite[Definition 3.1]{MaSteWi26x}.

\begin{defin}\label{defn-quasi}
      If $X$ is a set, a solution $r:X \times X  \to X \times X$ is said to be \emph{quasi bijective} if there exists a (unique) solution $\left(X, r^{-}\right)$ such that
      \begin{align*}
          rr^-r=r, \quad r^-rr^-=r^- \quad \text{and} \quad r^{-}r=rr^{-}.
      \end{align*}
\end{defin}

In the case of a weak brace $(S, +, \circ)$, $r_S^-$ coincides with the solution $r_{S^{op}}$ associated with the \emph{opposite weak brace} $S^{op}=\left(S, +^{op}, \circ\right)$ of $S$. If $S$ is also a dual weak brace, the solution $r_S$ is also \emph{quasi non-degenerate} (cfr. \cite[Definition 3.3]{MaSteWi26x} and \cite[Proposition 2.3]{MaSteWi26x}).

\begin{defin}\label{defn-left-right-quasi}
If $X$ is a set, a solution $r:X \times X  \to X \times X$ defined by $r(x,y)=(\lambda_x(y), \rho_y(x))$, for all $(x, y) \in X \times X$, is said to be:
    \begin{enumerate}
        \item[1)] \emph{quasi left non-degenerate} if for all $x \in X$ there exists a (unique) map $\lambda_x^{-}: X \to X$ such that, for all $x,y \in X$,
   \begin{align*}
       \lambda_x\lambda_x^{-}\lambda_x=\lambda_x, \quad \lambda_x^-\lambda_x\lambda_x^-=\lambda_x^-, \quad \lambda_x^0:=\lambda_x\lambda_x^{-}=\lambda_x^-\lambda_x, \quad\text{and}\quad \lambda_x^0\lambda_y=\lambda_y\lambda_x^0;
    \end{align*}
    \item[2)] \emph{quasi right non-degenerate} if for all $x \in X$
    there exists a (unique) map $\rho_x^{-}: X \to X$ such that, for all $x,y \in X$,
    \begin{align*}
       \rho_x\rho_x^{-}\rho_x=\rho_x, \quad \rho_x^-\rho_x\rho_x^-=\rho_x^-, \quad  \rho_x^0:=\rho_x\rho_x^{-}=\rho_x^-\rho_x, \quad\text{and}\quad \rho_x^0\rho_y=\rho_y\rho_x^0;
    \end{align*}
    \item[3)] \emph{quasi non-degenerate} if $r$ is both quasi right and left non-degenerate.
    \end{enumerate}
\end{defin}

\smallskip

\noindent In particular,  if $S$ is a dual weak brace, then the solution $r_S$ is quasi bijective and quasi non-degenerate with $\lambda_a^-=\lambda_{a^-}$ and $\rho_b^-=\rho_{b^-}$, for all $a, b \in S$, where $\rho_b(a)=\lambda_a(b)^- \circ a \circ b$.
    
\medskip

\section{Basic properties of inverse braces}
 In this section, we introduce the algebraic structure of inverse brace and give some basic properties that will be helpful throughout the paper.

\smallskip

\begin{defin}
    A triple $(S, +, \circ)$ is an \emph{inverse (left) brace} if $(S, +)$ and $(S, \circ)$ are inverse semigroups and 
    \begin{equation*}\label{distrib_law}
        a \circ (b+c)=a\circ b -a+a \circ c
    \end{equation*}
    holds, for all $a,b,c \in S,$ where $-a$ denotes the inverse of $a$ in $(S, +)$. Usually, we denote by $a^-$ the inverse of $a$ in $(S,\circ)$.
\end{defin}

\smallskip
 We observe that every weak brace is clearly an inverse brace. The following inverse braces are not weak braces.

\begin{ex}\label{esempiowang}
    Consider the additive group  $\mathbb{Z}_2=\{0,1\}$ of integers modulo $2$ equipped with the operation $\circ$ defined by setting $ 0\circ 0=0, 1\circ 1=1\circ 0=0\circ 1=1$. Then $\left(\mathbb{Z}_2,+, \circ\right)$ is an inverse brace that is not a weak brace. Indeed, $1\circ 1^-=1\circ 1=1$, while $-1+1=0$.
   \end{ex}

More generally, we can give the following broad class of examples.
\begin{ex}\label{ex_ring}
Let $(R, +,\cdot)$ be a ring and define the adjoint operation $\circ$ on $R$ by setting $a \circ b := a + ab + b,$ for all $a, b \in R$. 
If $(R,\circ)$ is an inverse semigroup, then it is straightforward to verify that $(R, +, \circ)$ is an inverse brace. 
One can see that $(R,\circ)$ is inverse if and only if $R$ is an extension of a strongly regular ring by a radical ring  (see \cite[Theorem 6]{Xi88}). Recall that a ring $(R,+,\cdot)$ is \emph{strongly regular} if $a\in a^2 R$, for all $a\in R$.
\end{ex}

\medskip
As in the theory of weak braces, if $(S, +, \circ)$ is an inverse brace, let us consider for all $a \in S$ the map
	$$
	\lambda_a:S\rightarrow S, \;\; b\mapsto -a + a\circ b.
	$$
The following lemmas can be proved as in \cite{CaMaMiSt22} for weak braces, since the additional condition on the idempotents is not used in the proofs. For this reason, we omit the proofs.

In what follows, let 
$\End(S,+)$ denote the semigroup of endomorphisms of the semigroup $(S,+)$.

\begin{lemma}\label{key2.4}
Let $(S,+,\circ)$ be an inverse brace. Then, for every $a\in S$, $\lambda_a \in \End(S,+)$. Moreover, the map $\lambda: S \rightarrow \End(S,+),\; a \mapsto \lambda_a$ is a homomorphism from the semigroup $(S,\circ)$ to $\End(S,+)$.
\end{lemma}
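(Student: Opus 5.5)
We need to prove two things: that $\lambda_a$ is an endomorphism of $(S,+)$, and that $\lambda_{a\circ b}=\lambda_a\lambda_b$. Only the left distributive law and the inverse-semigroup axioms will be used.

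\textbf{Step 1: $\lambda_a\in\End(S,+)$.} For $b,c\in S$ we compute, using the distributive law,
\begin{align*}
\lambda_a(b+c)=-a+a\circ(b+c)=-a+a\circ b-a+a\circ c=\lambda_a(b)+\lambda_a(c).
\end{align*}
This is immediate.

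\textbf{Step 2: a preliminary identity.} Before proving the homomorphism property, we show that $a\circ(-b)=a-a\circ b+a$ for all $a,b\in S$. Apply the distributive law to $b=b-b+b$:
\begin{align*}
a\circ b=a\circ b-a+a\circ(-b)-a+a\circ b,
\end{align*}
and to $-b=-b+b-b$:
\begin{align*}
a\circ(-b)=a\circ(-b)-a+a\circ b-a+a\circ(-b).
\end{align*}
Set $x=a\circ b$ and $y=-a+a\circ(-b)-a$. The two equations say $x+y+x=x$ and $y+x+y=y$, so $y$ is an inverse of $x$ in $(S,+)$. Uniqueness of inverses then gives $-a+a\circ(-b)-a=-(a\circ b)$. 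It remains to remove the outer copies of $a$. Adding $a$ on both sides yields $a-a+a\circ(-b)-a+a=a-a\circ b+a$. The left-hand side equals $a\circ(-b)$, because $a\circ(-b)=a\circ(-b+b-b)=a\circ(-b)-a+a\circ b-a+a\circ(-b)$ begins with $a\circ(-b)$ and ends with $a\circ(-b)$, and inserting $a-a$ or $-a+a$ next to $-a$ or $a$ changes nothing. One has to be careful with the idempotents $a-a$ and $-a+a$ here, but they can be absorbed as described.

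\textbf{Step 3: $\lambda$ is a homomorphism.} Using Step 1 and then Step 2,
\begin{align*}
\lambda_a\lambda_b(c)=\lambda_a(-b+b\circ c)=-a+a\circ(-b)-a+a\circ(b\circ c)=-a+a-a\circ b+a-a+(a\circ b)\circ c.
\end{align*}
Since $-a+a-a=-a$ and $a-a+a=a$, this simplifies to $-a-a\circ b+a+(a\circ b)\circ c$.

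We must compare this with $\lambda_{a\circ b}(c)=-(a\circ b)+(a\circ b)\circ c$. So the remaining task is to show
\begin{align*}
-a-a\circ b+a+a\circ b\circ c=-(a\circ b)+a\circ b\circ c.
\end{align*}
Write $x=a\circ b$. By Step 2 we have $-x=-a+a\circ(-b)-a$, hence $-x$ begins with $-a$ and ends with $-a$. Therefore $-a+a-x=-x$ and $-x+a-a=-x$.

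\textbf{Step 4 (the main obstacle).} It remains to remove the middle $+a$ in $-a-x+a+a\circ b\circ c$. I would use that $x\circ c=a\circ(b\circ c)$ begins with $a$: writing $b\circ c=b\circ c-b\circ c+b\circ c$ and distributing gives $x\circ c=x\circ c-a+\dots$. Rather than relying on that, I would mirror Step 2 and look for an expression of $x\circ c$ with a leading $a-a$ factor, namely $a\circ d=a-a+a\circ d$. This holds because $a\circ d=a\circ(d-d+d)$, which by distributivity starts with $a\circ d-a+\dots$; taking the inverse identity gives $a\circ(-d)=a-a\circ d+a$, whose left factor $a$ absorbs $a-a$. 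Granting $a\circ d=a-a+a\circ d$ with $d=b\circ c$, together with $-a-x=-x$ (which follows from $-x=-a+\dots$ and the commuting idempotents), the identity in Step 3 collapses to $-x+x\circ c$. The bookkeeping with the idempotents $\pm a\mp a$ in this step is where I expect the main difficulty.
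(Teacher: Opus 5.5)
Step 1 and the first half of Step 2 are correct. Expanding $a\circ(b-b+b)$ and $a\circ(-b+b-b)$ and using uniqueness of inverses gives $-(a\circ b)=-a+a\circ(-b)-a$.

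The argument breaks in Step 3. There you pass from $-a+a-a\circ b+a-a+(a\circ b)\circ c$ to $-a-a\circ b+a+(a\circ b)\circ c$ by deleting the second and fifth summands. The identities $-a+a-a=-a$ and $a-a+a=a$ do not justify this: no block $-a,a,-a$ or $a,-a,a$ occurs, because $-a\circ b$ sits in between. Worse, the new expression is in general not $\lambda_{a\circ b}(c)$. Take the trivial skew brace $a\circ b:=a+b$ on a non-abelian group. There $\lambda_a\lambda_b(c)=c$, while your expression equals $-a-b+a+b+c$, which differs from $c$ whenever $a$ and $b$ do not commute. So Step 4 sets out to prove a false identity. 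Its ingredient $-a-x=-x$ is also false: in a group it forces $a=0$. No amount of idempotent bookkeeping will close Step 4.

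The fix uses only what you already have: do not expand $a\circ(-b)$ at all. By Step 1 and the first half of Step 2,
\begin{align*}
\lambda_a\lambda_b(c)=\lambda_a(-b)+\lambda_a(b\circ c)=\left(-a+a\circ(-b)-a\right)+a\circ b\circ c=-(a\circ b)+(a\circ b)\circ c=\lambda_{a\circ b}(c).
\end{align*}
Equivalently, apply the absorptions $-a+a-x=-x$ and $-x+a-a=-x$, which you state at the end of Step 3, to the expression \emph{before} your faulty simplification.

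The second half of Step 2 then becomes unnecessary, and its justification is circular as written. If you want $a\circ(-b)=a-a\circ b+a$, take additive inverses in $-(a\circ b)=-a+a\circ(-b)-a$ and replace $b$ by $-b$.
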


\smallskip

\begin{lemma}\label{lemma_ab}
Let $(S,+,\circ)$ be an inverse brace. Then, for all $a,b\in S$, the following identity hold:
\begin{enumerate}
\item[\rm{(1)}] $a\circ b= a - a\circ (-b)+a$,
\item[\rm{(2)}] $a\circ b = a+\lambda_a(b)$.
\end{enumerate}
\end{lemma}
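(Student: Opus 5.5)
We have to prove Lemma~\ref{lemma_ab}: in an inverse brace, (1) $a\circ b= a - a\circ (-b)+a$ and (2) $a\circ b = a+\lambda_a(b)$. Identity (2) involves nothing deep, so we settle it first and then deduce (1) from it together with Lemma~\ref{key2.4}.

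For (2), note that $a+\lambda_a(b)=a-a+a\circ b$, so it suffices to prove $a\circ b=a-a+a\circ b$. Apply the defining identity to the additive decomposition $b=b-b+b$:
\begin{align*}
a\circ b=a\circ(b+(-b+b))=a\circ b-a+a\circ(-b+b).
\end{align*}
Applying the identity again to $a\circ(-b+b)$ gives
\begin{align*}
a\circ b=a\circ b-a+a\circ(-b)-a+a\circ b .
\end{align*}
A cleaner route is to use $b=b+(-b)+b$ in the form $a\circ(b+c)$ with $b$ on the left. We then get $a\circ b = a\circ b - a + x$ for some $x$, namely $x=a\circ(-b+b)$. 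Adding $-(a\circ b)$ on the left shows that $a\circ b$ lies in the $\mathcal{R}$-class region below $a\circ b-a$.

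Directly, I expect the argument to go through $\lambda$. By Lemma~\ref{key2.4}, $\lambda_a$ is an additive endomorphism, hence $\lambda_a(-b)=-\lambda_a(b)$; endomorphisms of inverse semigroups preserve inverses. Also
\begin{align*}
a\circ b - a + a\circ b = a\circ(b+b)
\end{align*}
and more generally $a\circ b-a$ absorbs. The key observation is that $a\circ b=a\circ(b-b+b)=a\circ b-a+a\circ(-b)-a+a\circ b$, so $a\circ b$ is of the form $y - a + z$ with $y=a\circ b$. Then $a-a+a\circ b = a-a+a\circ b-a+\cdots = a\circ b$, because the leading $-a$ inside combines with $a-a$ via $a-a-a=-a$. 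Concretely, substitute the expanded form into $a-a+a\circ b$. The summand $a\circ b$ itself begins with $a\circ b - a$, which has no leading $a$, so I instead use the first-level expansion $a\circ b=a\circ(e+b)$ with $e=b-b$:
\begin{align*}
a\circ b=a\circ(b-b)-a+a\circ b,\qquad a\circ(b-b)=a\circ b-a+a\circ(-b).
\end{align*}
The point to secure is that $a\circ(b-b)-a$ ends in $-a$, which is fixed by right addition of $a-a$, i.e.\ $w-a = w-a+a-a$. Hence
\begin{align*}
a\circ b = a\circ(b-b)-a+a-a+a\circ b = a\circ b + (-a+a\circ b).
\end{align*}
This is not yet (2). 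Therefore the main obstacle is showing that the left idempotent $a-a$ can be inserted on the left of $a\circ b$. For that I will use the expansion $a\circ b=a\circ(-b+b)\cdots$ together with $-b+b$ in the other order, i.e.\ $a\circ b = a\circ(b + (-b) + b)$ expanded as $a\circ(b)$, $-a$, etc., and aim to exhibit $a\circ b = (a\circ b) - a + \dots$ on the right, which yields $(a\circ b)+(-(a\circ b)) = \dots$, so that $-(a\circ b)$ and the idempotent $a\circ b - (a\circ b)$ dominate $a-a$. Commutativity of additive idempotents then gives $a-a+a\circ b=a\circ b$.

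Given (2), identity (1) follows. We have $a - a\circ(-b)+a = a-(a+\lambda_a(-b))+a = a-\lambda_a(-b)-a+a$. By Lemma~\ref{key2.4}, $-\lambda_a(-b)=\lambda_a(b)$, so this equals $a+\lambda_a(b)-a+a$. Using $\lambda_a(b)=-a+a\circ b$, the trailing $-a+a$ is absorbed, since $\lambda_a(b)$ starts with $-a$ and idempotents commute: $\lambda_a(b)-a+a=-a+a-a+a\circ b\ldots$. After rearranging with idempotent commutation, this reduces to $a+\lambda_a(b)=a\circ b$.
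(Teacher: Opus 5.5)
\noindent There is a genuine gap: neither identity is actually established. For (2), your own text concedes that the key step, inserting $a-a$ to the left of $a\circ b$, is not done. What follows is only a heuristic about idempotents ``dominating'' each other. Along the way you also use $a-a-a=-a$, which fails in non-Clifford inverse semigroups such as the Brandt semigroup $B_2$; what always holds is $-a+a-a=-a$.

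The deduction (2)$\Rightarrow$(1) has a second hole. Your reduction of $a-a\circ(-b)+a$ to $a+\lambda_a(b)-a+a$ is correct. You then need $\lambda_a(b)-a+a=\lambda_a(b)$, i.e.\ $a\circ b-a+a=a\circ b$, and you justify it by sliding the idempotent $-a+a$ to the left past the non-idempotent $\lambda_a(b)$. Commutativity of idempotents among themselves does not permit this. It would need $-a+a$ to commute with $\lambda_a(b)$, which is automatic only if idempotents of $(S,+)$ are central, i.e.\ if $(S,+)$ is Clifford, and the paper leaves that as an open problem. In fact $a\circ b-a+a=a\circ b$ is \cref{lemma_1}(1), which the paper derives \emph{from} the present lemma.

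The missing idea is uniqueness of inverses in $(S,+)$, used to prove (1) first. By left distributivity,
\begin{align*}
a\circ b&=a\circ\bigl((b-b)+b\bigr)=a\circ b-a+a\circ(-b)-a+a\circ b,\\
a\circ(-b)&=a\circ\bigl((-b+b)-b\bigr)=a\circ(-b)-a+a\circ b-a+a\circ(-b).
\end{align*}
Set $y:=-a+a\circ(-b)-a$. The first line reads $a\circ b+y+a\circ b=a\circ b$. Adding $-a$ on the left and on the right of the second line gives $y+a\circ b+y=y$. Hence $y=-(a\circ b)$, and taking inverses yields $a\circ b=a-a\circ(-b)+a$, which is (1).

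Now (2) is immediate: $a+\lambda_a(b)=a-a+a\circ b=(a-a+a)-a\circ(-b)+a=a\circ b$. In the same way $a\circ b-a+a=a\circ b$, which is the identity your second step lacked. This is exactly the device the paper uses in the proof of \cref{prop_homo_lambda}. It needs only left distributivity and never the weak-brace condition $a\circ a^-=-a+a$, which is why the argument from \cite{CaMaMiSt22} carries over to inverse braces.
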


\smallskip 

The following result provides a characterization of inverse braces that also appears in \cite[Proposition 3.1]{Wang25x}. We provide a different proof.

\begin{prop}\label{prop_homo_lambda}
     Let $(S, +)$ and $(S, \circ)$ be two inverse semigroups. Then $(S, +, \circ)$ is an inverse brace if and only if the following hold:
    \begin{enumerate}
       \item[\rm{(1)}] for all $a \in S$, the map $\lambda_a: S \rightarrow S, \;\; b \mapsto -a + a \circ b$ is a semigroup endomorphism of $(S, +)$;
       \item[\rm{(2)}] the map $\lambda: S \rightarrow \End(S, +), \;\; a \mapsto \lambda_a$ is a semigroup homomorphism from $(S, \circ)$ to $\End(S, +)$.
    \end{enumerate}
\end{prop}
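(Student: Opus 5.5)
The forward implication is exactly \cref{key2.4}, so only the converse needs an argument. Assume (1) and (2); we must show $a\circ(b+c)=a\circ b-a+a\circ c$ for all $a,b,c\in S$. The plan is to first recover the identity $a\circ b=a+\lambda_a(b)$ of \cref{lemma_ab}(2) from (1) and (2) alone, and then combine it with additivity of $\lambda_a$.

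\textbf{Step 1: proving $a\circ b=a+\lambda_a(b)$.} By definition, $a+\lambda_a(b)=a-a+a\circ b$. So it suffices to show that the idempotent $a-a$ fixes $a\circ b$ on the left, i.e.\ that $a\circ b=(a\circ b)'+a\circ b$ for some $x\le a-a$-type relation. I would use the homomorphism property (2) with $b=a^-\circ a$ to get
\[
\lambda_a=\lambda_{a\circ a^-\circ a}=\lambda_a\lambda_{a^-\circ a}.
\]
Hence $\lambda_a(b)=\lambda_a(\lambda_{a^-\circ a}(b))$, which gives
\[
-a+a\circ b=-a+a\circ\bigl(-(a^-\circ a)+a^-\circ a\circ b\bigr).
\]
This route does not yet close, so I would switch to a cleaner approach. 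Write $x=a\circ b$, note $x = a\circ a^-\circ x$, and apply (2) in the form $\lambda_{a\circ a^-}(x)=\lambda_a(\lambda_{a^-}(x))$.

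\textbf{Main obstacle.} I expect Step 1 to be the real difficulty: one has to show that $a\circ b$ lies below the additive idempotent $a-a$ on the left, that is, $a\circ b=a-a+a\circ b$. The fact to aim for is $a\circ b = a + \lambda_a(b)$. My first attempt would be to evaluate $\lambda_a$ on the idempotent $-b+b$. Since $\lambda_a$ is an endomorphism by (1), $\lambda_a(-b+b)$ is an additive idempotent $e$, and
\[
\lambda_a(b)=\lambda_a(b-b+b)=\lambda_a(b)+e.
\]
It then remains to relate $e$ to $a-a$ through the equality $-a+a\circ(-b+b)=e$, using commutativity of idempotents in $(S,+)$.

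\textbf{Step 2: the distributive law.} Once Step 1 is established, the conclusion is short. Using additivity of $\lambda_a$,
\[
a\circ(b+c)=a+\lambda_a(b)+\lambda_a(c)=a\circ b-a+a\circ c,
\]
where the last equality uses $a+\lambda_a(b)=a\circ b$ together with $\lambda_a(c)=-a+a\circ c$.
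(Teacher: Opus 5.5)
Your overall structure matches the paper's. The forward direction is \cref{key2.4}, the pivotal identity is $a\circ b=a+\lambda_a(b)$, and your Step 2 is exactly how the paper concludes. The gap is Step 1, which you never prove, and the routes you sketch cannot prove it. As written, they use only (1) together with the special instances $\lambda_{a\circ a^-\circ a}=\lambda_a\lambda_{a^-\circ a}$ and $\lambda_{a\circ a^-}=\lambda_a\lambda_{a^-}$ of (2), and these facts do not suffice.

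To see this, take $S=\{0,1\}$ with $x+y=\max(x,y)$ and $x\circ y=\min(x,y)$, which are two semilattices. Every $\lambda_a$ is the constant map with value $a$, so (1) holds, and both instances reduce to $\lambda_a\lambda_a=\lambda_a$. Yet $1+\lambda_1(0)=1\neq 0=1\circ 0$. In particular, relating $e=\lambda_a(-b+b)$ to $a-a$ via commutativity of idempotents cannot work, since that route draws on (1) alone.

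The missing idea is to apply (2) to the genuinely mixed product $a\circ b$, evaluated at $b^-\circ b$:
\[
-(a\circ b)+a\circ b=\lambda_{a\circ b}(b^-\circ b)=\lambda_a\lambda_b(b^-\circ b)=\lambda_a(-b+b)=\lambda_a(-b)+\lambda_a(b).
\]
This expresses the additive idempotent $-(a\circ b)+a\circ b$ through $\lambda_a$. Set $y=\lambda_a(-b)-a$. The display gives
\[
a\circ b+y+a\circ b=a\circ b+\lambda_a(-b)+\lambda_a(b)=a\circ b,
\]
while additivity of $\lambda_a$ gives
\[
y+a\circ b+y=\lambda_a(-b+b-b)-a=y.
\]
By uniqueness of inverses in $(S,+)$, $a\circ b=-y=a-\lambda_a(-b)=a+\lambda_a(b)$. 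Here you use that $\lambda_a$, as an endomorphism of an inverse semigroup, preserves inverses. This is precisely the paper's argument, and with it in place your Step 2 finishes the proof.
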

\begin{proof}
    If $(S, +, \circ)$ is an inverse brace, then conditions $(1)$ and $(2)$ follow by \cref{key2.4}.\\ Conversely, assume that $(1)$ and $(2)$ hold and let $a, b \in S$. Hence, we have
    \begin{align*}
    -a\circ b+a\circ b&=\lambda_{a\circ b}\left(b^- \circ b\right)=\lambda_a\lambda_b\left(b^- \circ b\right)=\lambda_a(-b+b)\\
    &=\lambda_a(-b)+\lambda_a(b)=-a+a\circ (-b)-a+a\circ b.
    \end{align*}
   It follows that
    $a\circ b=a\circ b-a\circ b+a\circ b=a\circ b-a+a\circ (-b)-a+a\circ b$ and
    \begin{align*}
    (-a+ a\circ (-b)-a)&+a\circ b+(-a+a\circ (-b)-a)\\
    &=\lambda_a(-b)+\lambda_a(b)+\lambda_a(-b)-a\\
    &=\lambda_a(-b+b-b)-a\\
    &=\lambda_a(-b)-a\\
    &=-a+a\circ (-b)-a.
    \end{align*}
    Hence, by the uniqueness of the inverse in the inverse semigroup $(S,+)$, it follows that $a\circ b=a-a\circ(-b)+a.$
    Besides, we have
   $$a+\lambda_a(b) =-\left(\lambda_a(-b)-a\right)= a-a\circ (-b)+a=a\circ b.$$
Therefore, for all $a,b, c \in S$, we get
    \begin{align*}
        a\circ(b+c)=&a+\lambda_a(b+c)= a+\lambda_a(b)+\lambda_a(c)= a\circ b + \lambda_a(c)= a\circ b-a+a\circ c,
        \end{align*}
       namely, $(S, +, \circ)$ is an inverse brace.
\end{proof}

\smallskip

Now, we aim to highlight the substantial differences between inverse braces and weak braces, particularly concerning the existing relation between the two sets of idempotents. 

\begin{theor}\label{prop_idemp}
    Let $(S,+,\circ)$ be an inverse brace. Then $\E(S, +) \subseteq \E(S, \circ)$.
\end{theor}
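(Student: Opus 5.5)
The plan is to fix $e\in\E(S,+)$, so that $e+e=e$ and $-e=e$, and to set $x:=e\circ e$. The goal is to show $x=e$. I will use only the brace identity and \cref{lemma_ab}, together with standard facts on inverse semigroups: uniqueness of inverses, commutativity of idempotents, and the natural partial order.

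First I apply the distributive law with $a=b=c=e$. Since $e+e=e$, this gives $x=e\circ(e+e)=x-e+x=x+e+x$. Next, \cref{lemma_ab}(1) with $a=b=e$ and $-e=e$ gives $x=e-x+e$. From the second identity, $e+x=e+e-x+e=x$, and symmetrically $x+e=x$. Substituting into the first identity yields $x=x+x$, so $x\in\E(S,+)$ and $x=e+x$. In the natural partial order of $(S,+)$ this says $x\le e$. In the same way, $\lambda_e(e)=-e+x=x$, and $\lambda_e$ restricts to the semilattice $\E(S,+)$ because it is an additive endomorphism by \cref{key2.4}.

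The main obstacle is the reverse inequality $e\le x$, that is, $e=e+x$. The additive identities above are all consistent with $x$ lying strictly below $e$, so some multiplicative information has to enter. I would bring in the $\circ$-inverse $e^-$ and write $e=e\circ(e^-\circ e)$. Expanding with \cref{lemma_ab}(2) gives $e=e+\lambda_e(e^-\circ e)$. I would then try to relate $\lambda_e(e^-\circ e)$ to $x$ using the homomorphism property of $\lambda$. One route is $\lambda_e=\lambda_{e\circ e^-\circ e}=\lambda_e\lambda_{e^-}\lambda_e$, applied to $e$, which gives $x=\lambda_e\lambda_{e^-}(x)$. Another is to compute $x\circ e^-$ and $e^-\circ x$, exploiting that $x$ is an additive idempotent, so the first step applies to $x$ as well: $x\circ x=x+x\circ x$.

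If this direct route stalls, I would fall back on a uniqueness-of-inverse argument in $(S,\circ)$. The idea is to show that $e$ and $x$ are both $\circ$-inverses of the same element, or that $e$ is the $\circ$-inverse of itself, using $e\circ e\circ e=e\circ x=e+\lambda_e(x)=e+\lambda_e\lambda_e(e)$ together with the fact that $\lambda_e$ maps additive idempotents below $e$ to additive idempotents. Once $e\circ e=e$ is established, we get $e\in\E(S,\circ)$, which proves $\E(S,+)\subseteq\E(S,\circ)$.
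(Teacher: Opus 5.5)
Your preliminary computations are correct: with $x=e\circ e$ you do obtain $x\in\E(S,+)$ and $x=e+x$, i.e.\ $x\le_+ e$. But the proposal stops where the real content of the theorem lies. The reverse inequality $e\le_+ x$, equivalently $e\circ e=e$, is never proved, and the routes you list do not supply it.

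\begin{itemize}
\item The first route is vacuous: $\lambda_e(e^-\circ e)=-e+e\circ e^-\circ e=-e+e=e$, so $e=e+\lambda_e(e^-\circ e)$ just says $e=e+e$.
\item The fallback is circular. Since $\lambda_e$ is an additive endomorphism and $x=x+e$, you get $\lambda_e(x)=\lambda_e(x)+x$. Hence $e\circ e\circ e=e\circ x=e+\lambda_e(x)=\lambda_e(x)\le_+ x\le_+ e$. This is again only an upper bound, and deducing $e\circ e\circ e=e$ from it would already require $x=e$.
\end{itemize}

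As you observe yourself, manipulations in which only $e$ acts on the left keep pushing $x$ downward. The suggestion about $\circ$-inverses names no concrete element to which uniqueness could be applied.

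The missing idea is to let the $\circ$-inverse $e^-$ act through the distributive law, and then compare with \cref{lemma_ab}(1) at $b=e^-$. Write $e=e\circ e^-\circ(e+e)$, distribute first with $e^-$ and then with $e$, and use $e+e=e=-e$:
\begin{align*}
e=e\circ e^-\circ(e+e)=e\circ\left(e^-\circ e-e^-+e^-\circ e\right)&=e\circ\left(e^-\circ e-e^-\right)-e+e\circ e^-\circ e\\
&=e-e+e\circ(-e^-)-e+e=-e+e\circ(-e^-)-e.
\end{align*}
By \cref{lemma_ab}(1), $e\circ e^-=e-e\circ(-e^-)+e$, whose additive inverse is exactly $-e+e\circ(-e^-)-e$. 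Hence $e=-(e\circ e^-)$, so $e\circ e^-=-e=e$, and therefore $e\circ e=e\circ e^-\circ e=e$.

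This is the paper's argument. It never needs $x=e\circ e$; identifying $e\circ e^-$ with $e$ is what provides the lower bound your approach is missing.
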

\begin{proof}
If $e \in \E(S,+)$, we have
\begin{align*}
    e&
=e\circ e^- \circ(e+e)=e\circ(e^-\circ e -e^-+e^-\circ e)\\
    &= e\circ (e^-\circ e-e^-) -e +e\circ e^-\circ e\\
    &=e-e+e\circ(-e^-)-e+e\\
    &=-e+e\circ(-e^-)-e\\
    &= - e\circ e^-,
\end{align*}
where we use \cref{lemma_ab}(1) in the last equality. It follows that $e=-e=e\circ e^-$ and so $e=e\circ e^-\circ e= e\circ e$.
 Therefore, $e \in \E(S,\circ),$ which proves the claim.
\end{proof}

\smallskip

    It follows immediately from \cref{prop_idemp} that if the multiplicative structure has a unique idempotent, then so does the additive structure. Consequently:
\begin{cor}\label{circ_gruppo}
Let $(S,+,\circ)$ be an inverse brace such that $(S,\circ)$ is a group. Then $(S, +, \circ)$ is a skew brace. 
\end{cor}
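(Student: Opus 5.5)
Since $(S,\circ)$ is a group, it has exactly one idempotent, its identity $1$, so $\E(S,\circ)=\{1\}$. Because $(S,+)$ is an inverse semigroup, $\E(S,+)$ is nonempty: for any $a\in S$ the element $-a+a$ is an additive idempotent. By \cref{prop_idemp}, $\E(S,+)\subseteq\E(S,\circ)=\{1\}$. Hence $\E(S,+)=\{1\}$, and in particular $-a+a=1=a-a$ for every $a\in S$.

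It remains to check that $(S,+)$ is a group with identity $1$. A standard fact about inverse semigroups is that one with a unique idempotent is a group. To see this directly, note that for every $a\in S$ we have $1+a=(a-a)+a=a$ and $a+1=a+(-a+a)=a$. Thus $1$ is a two-sided additive identity, and $-a$ is a two-sided additive inverse of $a$.

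So $(S,+)$ and $(S,\circ)$ are both groups, with $1$ as the common identity. The brace identity $a\circ(b+c)=a\circ b-a+a\circ c$ holds by assumption, so $(S,+,\circ)$ is a skew brace. Equivalently, it is a weak brace, since $a\circ a^-=1=-a+a$. No step here is a real obstacle; the only substantive input is \cref{prop_idemp}, together with the observation that an inverse semigroup with a single idempotent is a group.
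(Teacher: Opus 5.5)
Your proof is correct and follows the paper's route. The paper likewise deduces the corollary directly from \cref{prop_idemp}: a unique multiplicative idempotent forces a unique additive idempotent, so $(S,+)$ is a group. Your explicit check that an inverse semigroup with a single idempotent is a group simply fills in the standard step the paper leaves implicit.
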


\smallskip

\begin{rem}
    Note that if $(S, +, \circ)$ is an inverse brace such that both $(S, +)$ and $(S, \circ)$ are inverse monoids, then they have the same identity element. Indeed, let $0$ be the identity of $(S,+)$ and $e$ the identity of $(S, \circ)$. Then
   $$0=e \circ 0=e \circ (0+0)=e \circ 0-e+e \circ 0=0-e+0=-e,$$
  which implies that $e=-0=0$. 
\end{rem}

\smallskip

As recalled in the first section, in any weak brace $(S,+,\circ)$, we have $\E(S,+)=\E(S,\circ)$. Therefore, it is natural to ask whether the converse holds for inverse braces. As we show below, the answer is affirmative. We begin by establishing the following lemma.

\begin{lemma}\label{lemma_1}
    Let $(S,+,\circ)$ be an inverse brace. Then, for all $a,b\in S$, the following identity hold: 
\begin{enumerate}
\item[\rm{(1)}] $a\circ b = a\circ b -a+a,$
\item[\rm{(2)}] $a= a -a \circ a^-+a\circ a^-$ .
\end{enumerate}
\end{lemma}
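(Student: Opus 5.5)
The identity (1) follows from \cref{lemma_ab}(1) together with the inverse-semigroup law $a-a+a=a$ in $(S,+)$. Identity (2) is then the special case of (1) in which the first factor is the multiplicative idempotent $a\circ a^-$.

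\textbf{Proof of (1).} Fix $a,b\in S$. By \cref{lemma_ab}(1) we can write
\[
a\circ b = a - a\circ(-b) + a .
\]
Adding $-a+a$ on the right and using associativity of $+$ gives
\[
a\circ b - a + a = a - a\circ(-b) + (a - a + a) .
\]
Since $-a$ is the inverse of $a$ in $(S,+)$, we have $a-a+a=a$. The right-hand side is therefore $a - a\circ(-b) + a$, which equals $a\circ b$ by \cref{lemma_ab}(1) again. This proves (1).

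\textbf{Proof of (2).} Apply (1) with the pair $(a\circ a^-,\,a)$ in place of $(a,b)$:
\[
(a\circ a^-)\circ a = (a\circ a^-)\circ a - a\circ a^- + a\circ a^- .
\]
In the inverse semigroup $(S,\circ)$ we have $(a\circ a^-)\circ a = a\circ a^-\circ a = a$. Substituting this on both sides yields
\[
a = a - a\circ a^- + a\circ a^- ,
\]
which is (2).

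No step here is a real obstacle. The only point needing care is the order of the summands, since $(S,+)$ is not assumed commutative (or even Clifford). The argument uses only associativity and the identity $a-a+a=a$, never any commutation of idempotents.
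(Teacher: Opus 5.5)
Your proof is correct and is essentially the paper's argument. For (1) you combine \cref{lemma_ab}(1) with the absorption $a-a+a=a$, a bit more directly than the paper, which first rewrites $-(a\circ(-b))$ as $-a+a\circ b-a$ before absorbing the extra $-a+a$. For (2) you specialize (1) to the pair $(a\circ a^-,a)$ and use $a\circ a^-\circ a=a$, exactly as the paper does.
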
  
\begin{proof}
For $a, b \in S$, by \cref{lemma_ab}, we have 
   \begin{align*}
  a\circ b=& a - a\circ (-b)+a = a-a+a\circ b-a+a\\
  &=a-a+a\circ b -a+a-a+a\\&=a\circ b-a+a.
  \end{align*}
Thus, we get
$
a = (a\circ a^-)\circ a = (a\circ a^-)\circ a - a\circ a^- + a\circ a^- = a - a\circ a^-+a\circ a^-.
$
 \end{proof}     
\medskip\medskip

\begin{theor}\label{teo_inverse_weak}
    Let $(S,+,\circ)$ be an inverse brace such that $\E(S,+)=\E(S,\circ).$ Then, $(S,+,\circ)$ is a weak brace.
\end{theor}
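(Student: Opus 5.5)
The plan is to verify directly the defining identity of a weak brace, namely $a\circ a^- = -a+a$ for every $a\in S$. Since $(S,+,\circ)$ is already an inverse brace, nothing else is required. Fix $a\in S$ and set
\[
e := a\circ a^-, \qquad f := -a+a .
\]
Then $e\in\E(S,\circ)$ and $f\in\E(S,+)$. By the hypothesis $\E(S,+)=\E(S,\circ)$, both $e$ and $f$ lie in $\E(S,+)$. Hence $-e=e$, $-f=f$, and $e+f=f+e$, because additive idempotents commute in the inverse semigroup $(S,+)$. The strategy is to show that $f+e=f$ and $e+f=e$; together with the commutativity of idempotents this forces $e=f$.

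For the first relation, \cref{lemma_1}(2) gives $a = a - e + e$. Since $e$ is an additive idempotent, $-e+e=e+e=e$, so $a=a+e$. Adding $-a$ on the left yields
\[
f = -a+a = -a+a+e = f+e .
\]

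For the second relation, apply \cref{lemma_1}(1) with $b=a^-$:
\[
a\circ a^- = a\circ a^- - a + a, \qquad\text{that is,}\qquad e = e+f .
\]
Combining the two relations gives $e = e+f = f+e = f$, so $a\circ a^- = -a+a$ for all $a\in S$, and $(S,+,\circ)$ is a weak brace.

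There is no real obstacle here. The only point needing care is the use of the hypothesis $\E(S,+)=\E(S,\circ)$: it is what lets us treat $e=a\circ a^-$ as an \emph{additive} idempotent, so that $-e=e$ and $e$ commutes additively with $f$. This is exactly what \cref{prop_idemp} alone does not supply, since it gives only the inclusion $\E(S,+)\subseteq\E(S,\circ)$.
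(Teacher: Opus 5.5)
Your proof is correct and takes essentially the same approach as the paper: both use \cref{lemma_1}(1) with $b=a^-$ to get $a\circ a^- = a\circ a^- -a+a$, then the hypothesis $\E(S,\circ)\subseteq\E(S,+)$ to let the additive idempotents $a\circ a^-$ and $-a+a$ commute, and finally \cref{lemma_1}(2) to absorb $a\circ a^-$. The only difference is presentation: the paper writes this as the single chain $a\circ a^- = a\circ a^- -a+a = -a+a+a\circ a^- = -a+a-a\circ a^-+a\circ a^- = -a+a$, while you split it into the two relations $e=e+f$ and $f=f+e$.
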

\begin{proof}
    To prove the claim, it is enough to prove that $-a+a=a\circ a^-$, for all $a \in S$.\\
 Let $a \in S$. By \cref{lemma_1}(1), and since  $\E(S, \circ) \subseteq \E(S, +)$, we have
      \begin{align*}
          a\circ a^-= a\circ a^- -a+a =-a+a+a\circ a^-= -a+a-a\circ a^- + a\circ a^- = -a+a,
      \end{align*}
where we use \cref{lemma_1}(2) in the last equality. Therefore, $(S, +, \circ)$ is a weak brace.
 \end{proof}

\smallskip

   Following \cite{Wang25x}, an inverse brace $(S, +, \circ)$ is called \emph{symmetric} if \begin{align*}
a + (b\circ c) = (a+b)\circ a^-\circ (a+c),
\end{align*}
for all $a,b,c \in S$. In \cite[Proposition 4.2]{Wang25x}, it is proved that every symmetric inverse brace is a dual weak brace. The same result follows immediately from \cref{teo_inverse_weak}, as we show below.
\begin{cor} Every symmetric inverse brace is a dual weak brace. \end{cor}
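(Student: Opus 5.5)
The plan is to reduce to \cref{teo_inverse_weak}. By \cref{prop_idemp} we already have $\E(S,+)\subseteq \E(S,\circ)$. So it suffices to prove the reverse inclusion $\E(S,\circ)\subseteq\E(S,+)$; then $S$ is a weak brace. After that, it remains to check that $(S,\circ)$ is Clifford, so that $S$ is a dual weak brace.

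\textbf{Step 1: $\E(S,\circ)\subseteq \E(S,+)$.} Fix $e\in\E(S,\circ)$. I would specialise the symmetric identity $a+(b\circ c)=(a+b)\circ a^-\circ(a+c)$ to choices of $a,b,c$ built from $e$ and $-e$, and combine this with \cref{lemma_ab} and \cref{lemma_1}.
\begin{itemize}
\item The first choice is $a=b=c=e$. Writing $f:=e+e$, this gives $f=f\circ e\circ f$, which relates $f$ and $e$ in $(S,\circ)$.
\item A second choice is $b=c=e$ with $a$ arbitrary. Since $e\circ e=e$, this gives $a+e=(a+e)\circ a^-\circ(a+e)$.
\item Taking $a=e$ in the second choice gives $f=f\circ e\circ f$ again. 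Taking instead $a=-e$ gives $-e+e=(-e+e)\circ(-e)^-\circ(-e+e)$.
\end{itemize}
In each case I would compare the result with \cref{lemma_1}(2), which for $a=e$ reads $e=e-e+e$ (because $e\circ e^-=e$). The goal is to force $e=e+e$ or $e=-e+e$. The natural target is to show that $-e+e$ is a $\circ$-inverse of $e$. Uniqueness of inverses in $(S,\circ)$, together with $e^-=e$, then gives $-e+e=e$, so $e\in \E(S,+)$.

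\textbf{Step 2: Clifford property.} Once $S$ is a weak brace, \eqref{eq:idem+circ} gives $e\circ x=e+x$ for all $e\in\E(S)$ and $x\in S$. Moreover, $(S,+)$ is Clifford by \cite[Theorem 8]{CaMaMiSt22}. To show $e\circ a=a\circ e$, I would apply the symmetric identity with $c=e$:
\[
a+(b\circ e)=(a+b)\circ a^-\circ(a+e).
\]
I would then choose $b$ suitably, for instance $b=-a$ or $b=a^-$, and rewrite $a+e=e+a=e\circ a$ using centrality of $e$ in $(S,+)$. The aim is to express $a\circ e$ as $e\circ a$. Since $e\circ a\circ a^-=e+a\circ a^-$ is again an idempotent, the remaining manipulations should only involve commuting idempotents of $(S,\circ)$.

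The main obstacle is Step 1. The symmetric identity mixes $+$ and $\circ$ in a non-obvious way, and finding the right instantiation that yields $e=-e+e$ will likely need trial. The first thing I would try is $a=-e$, $b=c=e$, combined with $-e\circ e^-=-e$, which follows from the proof of \cref{prop_idemp} applied to additive idempotents.
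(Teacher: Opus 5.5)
There is a genuine gap: neither of your two steps is actually carried out. In Step 1 you list instances of the symmetric identity, but you never extract $e+e=e$ or $-e+e=e$ from them. The comparison with \cref{lemma_1}(2) gives nothing here, since for $e\in\E(S,\circ)$ it only says $e=e-e+e$, which holds in any inverse semigroup. The identity you plan to borrow from the proof of \cref{prop_idemp} is derived there only for $e\in\E(S,+)$, which is exactly what you are trying to prove, so using it would be circular. Step 2 is likewise only a hope that some choice of $b$ will work.

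The missing idea is that the symmetric identity $a+(b\circ c)=(a+b)\circ a^-\circ(a+c)$ is precisely the inverse-brace distributive law for the swapped triple $(S,\circ,+)$. In that triple, $(S,\circ)$ plays the additive role (with $a^-$ as the ``negative'') and $+$ the multiplicative one. Hence \cref{prop_idemp} applied to $(S,\circ,+)$ gives $\E(S,\circ)\subseteq\E(S,+)$ at once; this is what the paper's ``since $(S,+,\circ)$ is symmetric'' amounts to before it invokes \cref{teo_inverse_weak}. The same swap also settles your Step 2. The triple $(S,\circ,+)$ is an inverse brace with $\E(S,\circ)=\E(S,+)$, so it is a weak brace by \cref{teo_inverse_weak}. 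Its additive semigroup, which is $(S,\circ)$, is therefore Clifford by \cite[Theorem 8]{CaMaMiSt22}.

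If you prefer to keep a direct computation for Step 1, your first instance does suffice once you add the left brace law. Indeed, $e\circ(e+e)=e\circ e-e+e\circ e=e-e+e=e$. So with $f=e+e$ you get both $e\circ f=e$ and $f=f\circ e\circ f=f\circ e$. In an inverse semigroup these two equations force $f=e$:
\begin{itemize}
\item from $e\circ f=e$ you get $f^-\circ e=e$;
\item then $f=f\circ f^-\circ e=e\circ f\circ f^-=e\circ f^-$;
\item hence $f^-=f\circ e=f$, and finally $f=e\circ f=e$.
\end{itemize}
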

\begin{proof}
    If $(S,+,\circ)$ is a symmetric inverse brace, then $\E(S,+)\subseteq \E(S,\circ)$ by \cref{prop_idemp}. Moreover, since $(S,+,\circ)$ is symmetric, we also have $\E(S,\circ)\subseteq \E(S,+)$. Hence, the equality holds, and the claim follows from \cref{teo_inverse_weak}.
\end{proof}

\medskip

Finally, we note that there is a further link between the additive and multiplicative idempotents of an inverse brace. Given an inverse brace $(S,+,\circ)$, consider the natural partial order relation $\leq_+$ and $\leq_{\circ}$ on $(S, +)$ and $(S,\circ)$, respectively.

\begin{prop}\label{order}
     Let $(S,+,\circ)$ be an inverse brace. Then
     \begin{align*}
         \forall e_1, e_2 \in \E(S,+) \quad e_1 \leq_+ e_2 \Longrightarrow e_1 \leq_\circ e_2.
     \end{align*}
\end{prop}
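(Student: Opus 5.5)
The plan is to show directly that $e_1\circ e_2=e_1$, which is exactly $e_1\leq_\circ e_2$ once we know both elements are $\circ$-idempotent; that is guaranteed by \cref{prop_idemp}. On the semilattice of idempotents, the natural partial order reads $f\leq g$ iff $f=fg$. So the hypothesis $e_1\leq_+ e_2$ means $e_1=e_1+e_2$, and the goal is $e_1=e_1\circ e_2$. Throughout I set $x:=e_1\circ e_2$ and use $-e_i=e_i$ and $e_i+e_i=e_i$.

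\textbf{Step 1: $e_1=e_1+x$.} Apply the brace identity to $e_1\circ(e_1+e_2)$. The left side is $e_1\circ e_1=e_1$, using $e_1+e_2=e_1$ and $e_1\in\E(S,\circ)$. The right side is
\[
e_1\circ e_1-e_1+e_1\circ e_2=e_1+e_1+x=e_1+x .
\]
Hence $e_1=e_1+x$.

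\textbf{Step 2: $x=e_1-x+e_1$.} By \cref{lemma_ab}(1), $e_1\circ e_2=e_1-e_1\circ(-e_2)+e_1$. Since $-e_2=e_2$, this gives $x=e_1-x+e_1$.

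\textbf{Step 3: combine.} Substituting Step 2 into $e_1+x$ and absorbing the repeated $e_1$ gives
\[
e_1+x=e_1+e_1-x+e_1=e_1-x+e_1=x .
\]
Together with Step 1 this yields $x=e_1+x=e_1$, i.e.\ $e_1\circ e_2=e_1$, so $e_1\leq_\circ e_2$.

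The main subtlety is that $x$ is not known to be an additive idempotent, so $e_1=e_1+x$ cannot be concluded from order-theoretic arguments in $(S,+)$ alone. The extra relation $x=e_1-x+e_1$ from \cref{lemma_ab}(1) is what closes the argument. If that lemma's form were unavailable, I would instead use \cref{lemma_1}(1), which gives $x=x+e_1$, and try to manipulate that together with Step 1.
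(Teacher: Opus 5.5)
Your proof is correct and is essentially the paper's argument: both evaluate $e_1\circ(e_1+e_2)$ once as $e_1\circ e_1=e_1$ and once as $e_1\circ e_2$. The only difference is how the second evaluation is justified. The paper first proves $e\circ a=e\circ(e+a)$ for every $e\in\E(S,+)$ via \cref{lemma_ab}(2) and additivity of $\lambda_e$, whereas you expand with the distributive law and absorb the extra $e_1$ using \cref{lemma_ab}(1).
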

\begin{proof}
Initially, for all $e \in \E(S, +)$ and $a\in S$,  observing that $e=e \circ e$ by \cref{prop_idemp},  applying \cref{lemma_ab}(2), we obtain
\begin{align*}
    e \circ a=e+\lambda_e(a) =e \circ e+\lambda_e (a)=e \circ (e+a).
\end{align*}
    Now, let $e_1, e_2 \in \E(S, +)$ be such that $e_1 \leq_+ e_2$. Then, by \cref{prop_idemp}, $$e_1\circ e_2 = e_1\circ(e_1+e_2)=e_1\circ e_1=e_1,$$ that is, $e_1 \leq_\circ e_2$.
\end{proof}

\smallskip

\begin{ex}
 Consider the semilattices $(S, +)$ and $(S, \circ)$ on the set $S=\{a, b\}$ with binary operations given by $a+b=a$ and $a \circ b=b$.
In light of \cref{order}, since $a \leq_+ b$ and $a \nleq_\circ b$, these two operations cannot define a weak brace.
\end{ex}

\medskip

\section{The \texorpdfstring{$\sigma$}{} map associated with an inverse brace}

In the following, for every inverse brace $(S, +, \circ)$ and $a \in S$, let us denote by $\sigma_a$ the map 
	$$
	\sigma_a:S\rightarrow S, \;\; b\mapsto a \circ \left(a^-+b\right).$$
In any weak brace $S$, we have 
\begin{center}
$\sigma_a(b) = a\circ a^- -a+a\circ b = -a+a-a+a\circ b = -a +a\circ b =\lambda_a(b)$,
\end{center} for all $a, b \in S$. 
In general, this identity does not hold in the case of inverse braces, as we show in the next example.
\begin{ex}\label{ex_cliff}
  Let $S=\{0, x, y\}$. Consider the commutative inverse monoids $(S, +)$ and $(S, \circ)$, both with identity $0$, whose binary operations are given by 
  $$\begin{tabular}{c | c c c c }
    $+$ & $0$ & $x$ & $y$  \\
    \hline
   $0$& $0$ & $x$ & $y$ \\
    $x$ & $x$ & $x$ & $y$ \\
    $y$& $y$ & $y$ & $x$ 
\end{tabular} \qquad \text{and} \qquad 
\begin{tabular}{c | c c c c }$\circ$ & $0$ & $x$ & $y$  \\
    \hline
   $0$& $0$ & $x$ & $y$ \\
    $x$ & $x$ & $x$ & $y$ \\
    $y$& $y$ & $y$ & $y$ 
\end{tabular}$$
  Then $(S, +, \circ)$ is an inverse brace. Moreover, for example, $\sigma_y(y) =y$, while $\lambda_y(y)=x$.
\end{ex}

\smallskip
    
The following result expresses the general relationship between the maps $\sigma_a$ and $\lambda_a$.

\begin{prop}\label{rem_sigma_lambda}
If $(S, +, \circ)$ is an inverse brace, then, for all $a, b \in S$,
$$\sigma_a(b)=a \circ a^- + \lambda_a(b)=a \circ a^- \circ \lambda_a(b).$$
Moreover, $\sigma_a=\lambda_a$, for every $a \in S$, if and only if $S$ is a weak brace.
    \begin{proof}
        Let $a, b \in S$. Then, by \cref{key2.4}, we have
        \begin{align*}
        \sigma_a(b)&=a \circ a^-+\lambda_a( b)=a \circ a^-+\lambda_{a \circ a^- \circ a}(b)=a \circ a^-+\lambda_{a \circ a^- }\lambda_a(b)=a \circ a^- \circ \lambda_a(b),
        \end{align*}
        where we apply \cref{lemma_ab}(2) in the last equality. \\
   Now, assume that $\sigma_a=\lambda_a$, for every $a \in S$. If $a \in S$, 
 we have
   \begin{align*}
      -a+a&=\lambda_a\left(a^- \circ a\right)\\
      &=\sigma_a\left(a^- \circ a\right)\\
      &=a \circ a^- + \lambda_a\left(a^- \circ a\right)\\
      &=a \circ a^--a+a\\
      &=a \circ a^- &\mbox{by \cref{lemma_1}(1)}
   \end{align*}
   Hence, $S$ is a weak brace. The converse implication is obvious.
    \end{proof}
\end{prop}

\smallskip

In contrast to the map $\lambda_a$, the map $\sigma_a$ need not be an endomorphism of the additive structure,  as demonstrated by the following example.
\begin{ex}
    Let $(S,+,\circ)$ be the inverse brace in \cref{ex_cliff}. 
    Then, for example, $\sigma_y \notin \End(S,+)$. Indeed, $\sigma_y(x+y)=\sigma_y(y)=y$, while $\sigma_y(x)+\sigma_y(y)=y+y=x$.
   \end{ex}

\smallskip

\noindent In some cases, we can characterize when $\sigma_a$ is an endomorphism of $(S, +)$. We first prove the following result.

\begin{lemma}\label{lemma_ameno}
    Let $(S, +, \circ)$ be an inverse brace. Then $\sigma_a\left(-a^-\right)=a$, for every $a \in S$.
\end{lemma}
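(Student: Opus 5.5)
The plan is to avoid computing $a\circ(a^-+(-a^-))$ directly via the distributive law. Instead I will use the decomposition of $\sigma_a$ in \cref{rem_sigma_lambda}, together with the fact that $\lambda_a$ is an additive endomorphism (\cref{key2.4}). Throughout, put $f:=a\circ a^-\in\E(S,\circ)$. By \cref{rem_sigma_lambda},
\begin{align*}
\sigma_a\left(-a^-\right)=f+\lambda_a\left(-a^-\right).
\end{align*}

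First I rewrite $\lambda_a(-a^-)$. Since $\lambda_a\in\End(S,+)$ and $(S,+)$ is an inverse semigroup, $\lambda_a$ preserves additive inverses. Hence
\begin{align*}
\lambda_a\left(-a^-\right)=-\lambda_a\left(a^-\right)=-\left(-a+a\circ a^-\right)=-f+a,
\end{align*}
using $-(x+y)=-y-x$ and $-(-a)=a$. Therefore
\begin{align*}
\sigma_a\left(-a^-\right)=f-f+a.
\end{align*}

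It remains to show $f-f+a=a$, and this is the only point needing care. Here $f$ is merely a multiplicative idempotent, so it need not be additively idempotent, and $-f$ need not equal $f$. The trick is to apply \cref{lemma_ab}(2) to the pair $(f,a)$, which gives $f\circ a=f+\lambda_f(a)$. Since $f\circ a=a\circ a^-\circ a=a$, we have
\begin{align*}
\lambda_f(a)=-f+f\circ a=-f+a,
\end{align*}
and so
\begin{align*}
a=f\circ a=f+\lambda_f(a)=f-f+a.
\end{align*}
Combining this with the previous display yields $\sigma_a(-a^-)=a$, as claimed.

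The main obstacle I anticipate is exactly this last step. One is tempted to invoke \cref{lemma_1}(2), but that lemma gives $a=a-f+f$, with the terms in the wrong order. The right tool is instead \cref{lemma_ab}(2) with $f$ playing the role of the first argument.
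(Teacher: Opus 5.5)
Your proof is correct and is essentially the paper's argument. Both reduce $\sigma_a(-a^-)$ to $a\circ a^- - a\circ a^- + a$, and then conclude by applying \cref{lemma_ab}(2) to the pair $(a\circ a^-, a)$ together with $a\circ a^-\circ a = a$. The only difference is the middle step: you obtain $\lambda_a(-a^-) = -a\circ a^- + a$ from the fact that the additive endomorphism $\lambda_a$ preserves inverses, whereas the paper expands $a\circ(-a^-)$ via \cref{lemma_ab}(1) and collapses $a\circ a^- - a + a$ to $a\circ a^-$ using \cref{lemma_1}(1).
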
 
\begin{proof}
Applying \cref{lemma_ab} and \cref{lemma_1}(1), we obtain
\begin{align*}
    \sigma_a\left(-a^-\right)&
    =a\circ a^- -a+ a\circ (-a^-)= a\circ a^--a+a-a \circ a^-+a\\
    &=a\circ a^- -a\circ a^-+a\circ a^- \circ a= a\circ a^- +\lambda_{a\circ a^-}(a)\\  
    &=a \circ a^- \circ a=a,
\end{align*}
for all $a \in S$.
\end{proof}

\smallskip

\begin{prop}
    Let $(S, +, \circ)$ be an inverse brace such that $(S, +)$ is a Clifford semigroup. 
    Then, for all $a \in S$, $\sigma_a \in \End(S, +)$ if and only if $S$ is a weak brace.
\end{prop}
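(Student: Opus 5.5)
We prove the easy direction first and then reduce the other to \cref{teo_inverse_weak} by showing that every multiplicative idempotent is an additive idempotent.

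\emph{The converse direction.} If $S$ is a weak brace, then $\sigma_a=\lambda_a$ for every $a\in S$ by \cref{rem_sigma_lambda}. Since $\lambda_a\in\End(S,+)$ by \cref{key2.4}, every $\sigma_a$ is an endomorphism of $(S,+)$.

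\emph{Reduction for the direct direction.} Assume $\sigma_a\in\End(S,+)$ for all $a\in S$. By \cref{prop_idemp} we already have $\E(S,+)\subseteq\E(S,\circ)$. By \cref{teo_inverse_weak}, it therefore suffices to prove $\E(S,\circ)\subseteq\E(S,+)$. So fix $f\in\E(S,\circ)$. Then $f^-=f$, $f\circ f=f$, and $\sigma_f(b)=f\circ(f+b)=f+\lambda_f(b)$ by \cref{rem_sigma_lambda}.

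\emph{Step 1: $-f=f$.} A semigroup homomorphism between inverse semigroups preserves inverses, because it maps the unique inverse of an element to an inverse of its image. By \cref{lemma_ameno}, $\sigma_f(-f)=\sigma_f(-f^-)=f$, so applying $\sigma_f$ to $f$ gives $\sigma_f(f)=-f$. On the other hand, a direct computation gives
\[
\sigma_f(f)=f+\lambda_f(f)=f-f+f\circ f=f-f+f=f .
\]
Comparing the two values yields $-f=f$.

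\emph{Step 2: $f+f=f$.} Set $g:=f+f$. By Step 1, $g=f-f$, so $g\in\E(S,+)$. The map $\lambda_f$ is an additive endomorphism, so it sends the idempotent $g$ to an idempotent. We compute $\lambda_f(f)=-f+f\circ f=-f+f=g$, and hence $\lambda_f(g)=g+g=g$. Using that $\sigma_f$ is additive, we evaluate $\sigma_f(g)$ in two ways:
\[
\sigma_f(g)=\sigma_f(f)+\sigma_f(f)=f+f=g,
\qquad
\sigma_f(g)=f+\lambda_f(g)=f+g=f+f+f=f-f+f=f .
\]
Therefore $f=g=f+f$, that is, $f\in\E(S,+)$.

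\emph{Conclusion.} Steps 1 and 2 give $\E(S,\circ)\subseteq\E(S,+)$, hence $\E(S,+)=\E(S,\circ)$, and \cref{teo_inverse_weak} shows that $S$ is a weak brace.

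\emph{Main obstacle and the Clifford hypothesis.} The crux is choosing where to test the endomorphism property of $\sigma_f$. The natural candidates are $-f^-$, where \cref{lemma_ameno} gives the value, and the idempotent $f+f$. The argument above does not appear to use the Clifford hypothesis on $(S,+)$. If some step fails on re-checking, I would fall back on it as follows. Apply $\sigma_a$ to $a^-$ to get $-a=a\circ a^- -a+a\circ a^-$. Then use centrality of additive idempotents, together with \cref{lemma_1}(2), to conclude directly that $-a+a=a\circ a^-$.
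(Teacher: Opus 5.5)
Your proof is correct, and it takes a different route from the paper's. The paper works with an arbitrary $a$. Additivity of $\sigma_a$ at $b=-a^-$, via \cref{lemma_ameno}, gives $a+\sigma_a(c)=a\circ c$. The choice $c=a^-\circ a$ then yields $a+a\circ a^-=a$, hence $-a+a+a\circ a^-=-a+a$. From this the weak-brace axiom $a\circ a^-=-a+a$ follows directly by \cref{lemma_1}(1), after commuting the additive idempotent $-a+a$ past the element $a\circ a^-$. That commutation is the only place the Clifford hypothesis is used, since $a\circ a^-$ is not yet known to be an additive idempotent.

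You instead test additivity only at multiplicative idempotents $f$, obtain $\E(S,\circ)\subseteq\E(S,+)$, and let \cref{teo_inverse_weak} finish. Inside that theorem the needed commutation is between two additive idempotents, which holds in any inverse semigroup. Your argument therefore proves a stronger statement: the Clifford assumption on $(S,+)$ is superfluous. It even suffices that $\sigma_f\in\End(S,+)$ for every $f\in\E(S,\circ)$.

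Your Step 1 is not needed. Directly, $\sigma_f(f+f)=f\circ f-f+f\circ(f+f)=f-f+f=f$, while additivity gives $\sigma_f(f+f)=\sigma_f(f)+\sigma_f(f)=f+f$, so $f+f=f$ at once. Likewise, the paper's intermediate identity $a+a\circ a^-=a$, specialised to $a=f$, already gives $f+f=f$, so the paper's argument could also drop the Clifford hypothesis by appealing to \cref{teo_inverse_weak}. Your fallback paragraph is unnecessary.
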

\begin{proof}
    Assume that $\sigma_a \in \End(S, +)$, for all $a \in S$. Then, for all $a,b,c \in S$, we have \begin{align*} \sigma_a(b)+\sigma_a(c)=\sigma_a(b+c)=a \circ \left(a^-+b\right) -a + a \circ c=\sigma_a(b)+\lambda_a(c).
    \end{align*}
Thus, taking $b=-a^-$ and using \cref{lemma_ameno}, we obtain $a +\sigma_a(c)= a+\lambda_a(c)=a\circ c$, where the last equality follows from \cref{lemma_ab}(2).
     Choosing $c=a^- \circ a$, we get $$a+\sigma_a\left(a^- \circ a\right)=a \circ a^- \circ a=a.$$  Equivalently, $a+a \circ a^--a+a =a$. Hence, by \cref{lemma_1}(1), we get
     $$a+a \circ a^-=a.$$ 
     This implies that $-a+a+a \circ a^-=-a+a$.
     Using \cref{lemma_1}(1) and the fact that $(S, +)$ is Clifford, we have $-a+a=a \circ a^-$. Therefore, $S$ is a weak brace.\\
     The converse implication is obvious.
\end{proof}




\smallskip

In general, unlike the lambda map, the map $\sigma: S \to S^S, a \mapsto \sigma_a$ is not a homomorphism from the inverse semigroup $(S, \circ)$ to the monoid $S^S$. 
\begin{ex}\label{nhomo}
        Consider the ring  $\left(\mathbb{Z}_3,+,\cdot \right)$ of integers modulo $3$.
        Then $\left(\mathbb{Z}_3,+, \circ\right)$ is an inverse brace belonging to the class of inverse braces presented in \cref{ex_ring}. 
In this example, the map $\sigma$ is not a homomorphism since, for example, 
$\sigma_{1 \circ 2}(2)=2$, while $\sigma_{1}\sigma_2(2)=1$.
\end{ex}

\smallskip

\noindent We can prove the following result.
\begin{prop} \label{Aequivsigma}
  Let $(S, +, \circ)$ be an inverse brace. Then, the map $\sigma: S \to S^S, a \mapsto \sigma_a$ is a homomorphism from the inverse semigroup $(S, \circ)$ into the monoid $S^S$
  if and only if the following is satisfied
   \begin{align}\label{A} 
    &\sigma_a(e)=a \circ e \circ a^-\tag{I}
     \end{align}
     for all $a \in S$ and $e \in \E(S,\circ)$.
\end{prop}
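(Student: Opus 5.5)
We prove the two implications separately. Both rest on the identities of \cref{rem_sigma_lambda},
$$\sigma_a(y)=a\circ a^- + \lambda_a(y)=a\circ a^-\circ \lambda_a(y),$$
together with \cref{key2.4}, \cref{lemma_ab}(2) and \cref{lemma_1}(1).

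\emph{Sufficiency of \eqref{A}.} Assume \eqref{A} and let $a,b,c\in S$. Set $f=b\circ b^-\in \E(S,\circ)$. First we rewrite $\sigma_b(c)$. By \cref{lemma_ab}(2) and \cref{key2.4},
$$\sigma_b(c)=f\circ\lambda_b(c)=f+\lambda_f\lambda_b(c)=f+\lambda_b(c),$$
where the last step uses $\lambda_f\lambda_b=\lambda_{f\circ b}=\lambda_b$. Since $\lambda_a$ is additive, $\lambda_a(\sigma_b(c))=\lambda_a(f)+\lambda_{a\circ b}(c)$. Therefore
$$\sigma_a\sigma_b(c)=a\circ a^-+\lambda_a(f)+\lambda_{a\circ b}(c)=\sigma_a(f)+\lambda_{a\circ b}(c).$$
By \eqref{A}, $\sigma_a(f)=a\circ b\circ b^-\circ a^-=(a\circ b)\circ(a\circ b)^-$. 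Hence
$$\sigma_a\sigma_b(c)=(a\circ b)\circ(a\circ b)^-+\lambda_{a\circ b}(c)=\sigma_{a\circ b}(c),$$
so $\sigma$ is a homomorphism.

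\emph{Necessity of \eqref{A}.} Assume $\sigma$ is a homomorphism. We use two auxiliary facts.

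\textbf{(i)} For every $e\in\E(S,\circ)$ and $y\in S$, $\sigma_e(y)=e\circ y$. Indeed, $\sigma_e(y)=e\circ\lambda_e(y)=e+\lambda_e\lambda_e(y)=e+\lambda_e(y)=e\circ y$, using \cref{lemma_ab}(2) twice.

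\textbf{(ii)} For every $x\in S$, $\sigma_x(x^-\circ x)=x\circ x^-$. This holds unconditionally:
$$\sigma_x(x^-\circ x)=x\circ x^-+\lambda_x(x^-\circ x)=x\circ x^- -x+x=x\circ x^-,$$
where the last equality is \cref{lemma_1}(1).

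Now fix $a\in S$, $e\in\E(S,\circ)$ and put $x=a\circ e$. By (i), $\sigma_e(e)=e$, so $\sigma_a(e)=\sigma_a\sigma_e(e)=\sigma_x(e)$. Using $x=x\circ x^-\circ x$, the homomorphism property, and (i) again,
$$\sigma_x(e)=\sigma_x\bigl(\sigma_{x^-\circ x}(e)\bigr)=\sigma_x(x^-\circ x\circ e).$$
Since $x^-\circ x=e\circ a^-\circ a\circ e$ and idempotents commute, $x^-\circ x\circ e=x^-\circ x$. Applying (ii),
$$\sigma_a(e)=\sigma_x(x^-\circ x)=x\circ x^-=a\circ e\circ a^-,$$
which is \eqref{A}.

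The main obstacle is this direction: extracting $a\circ e\circ a^-$ from the homomorphism property alone. The key steps are the reduction $\sigma_a(e)=\sigma_{a\circ e}\bigl((a\circ e)^-\circ(a\circ e)\bigr)$ via (i), combined with the general identity (ii).
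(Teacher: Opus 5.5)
Your proof is correct and follows the paper's argument. Sufficiency is the same computation, based on $\sigma_a(y)=a\circ a^-+\lambda_a(y)$ and the additivity of $\lambda_a$. Necessity uses the same key reduction $\sigma_a(e)=\sigma_a\sigma_e(e)=\sigma_{a\circ e}(e)$.

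The only difference is how $\sigma_{a\circ e}(e)$ is evaluated. The paper computes $\sigma_{a\circ e}(e)=a\circ e\circ(e\circ a^-+e)$ directly, using left distributivity and \cref{lemma_1}(1). Your second use of the homomorphism property, replacing $e$ by $x^-\circ x$ and then invoking (ii), is valid but not needed.
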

\begin{proof}
First, for all $e \in \E(S, \circ)$, we obtain
\begin{align*}
    \sigma_e(e)=e \circ \left(e+e\right)=e \circ e-e+e \circ e=e-e+e=e.
\end{align*}
Assume that $\sigma$ is a homomorphism from $(S,\circ)$ into the monoid $S^S$.  Hence, for all $a\in S$ and $e\in E(S,\circ)$, we have
\begin{align*}
    \sigma_a(e) &=\sigma_a\sigma_e(e)=\sigma_{a\circ e}(e)=a\circ e\circ (e\circ a^-+e)\\
                &= a\circ e\circ a^--a\circ e+ a\circ e=a\circ e\circ a^-,
\end{align*}
where the last equality follows from \cref{lemma_1}(1).\\  
Vice versa, assume that \eqref{A} is satisfied and let $a, b, c \in S$. Then, using \cref{rem_sigma_lambda} and \cref{prop_homo_lambda}, we get
       \begin{align*}
        \sigma_{a \circ b}(c)&=a \circ b \circ b^- \circ a^-+\lambda_{a \circ b}(c)=\sigma_a\left(b \circ b^-\right)+\lambda_a\lambda_b(c) \\
          &=a\circ a^-+\lambda_a\left(b \circ b^- + \lambda_b(c)\right)=\sigma_a\sigma_b(c),
     \end{align*}
   which is our claim.
\end{proof}

\smallskip

\begin{ex}\label{ex_semilattice}
    Let $(S,+,\circ)$ be an inverse brace with $(S, \circ)$ a semilattice (for example, the one in \cref{ex_cliff}). Then, by \cref{lemma_ab}(2), for all $a, e\in S$, we have 
    \begin{align*}
    \sigma_a(e)&=a \circ (a+e)=a \circ a-a+a \circ e=a+\lambda_a(e)=a \circ e=a \circ e  \circ a.
\end{align*}
  Thus, by \cref{Aequivsigma}, the map $\sigma$ is a homomorphism from $(S, \circ)$ to $S^S$.
\end{ex}

\smallskip

\begin{rem}\label{A*}
    If $(S, +, \circ)$ is an inverse brace satisfying \eqref{A}, we also have that $\sigma_a(b)=a \circ b \circ b^- \circ \left(a^-+b\right)$, for all $a,b \in S$. In fact, 
   \begin{align*}
       a \circ b \circ b^- \circ \left(a^-+b\right)&=a \circ b \circ b^- \circ a^--a \circ b \circ b^-+a \circ b\\
       &=\sigma_a\left(b \circ b^-\right)-a \circ b \circ b^-+a \circ b &\mbox{by \eqref{A}}\\
       &=a \circ a^--a+a \circ b \circ b^--a \circ b \circ b^-+a \circ b &\mbox{by \cref{rem_sigma_lambda}}\\
       &=a \circ a^--a+a \circ b &\mbox{by \cref{lemma_ab}(2)}\\
       &=\sigma_a(b).
   \end{align*}
\end{rem}

\medskip

\section{Inverse braces that give rise to solutions}
In this section, we describe inverse braces for which $\sigma$ is a homomorphism from $(S, \circ)$ into $S^S$
which give rise to solutions of the form
  \begin{align}\label{r_S}
     r_S(a,b)=\left(\sigma_a(b), \sigma_a(b)^-  \circ a \circ b\right),
 \end{align}
 for all $a,b \in S$. 
By \cref{rem_sigma_lambda}, the map $r_S$ can be also written as
\begin{align*}
    r_S(a, b)=\left(a \circ a^-+ \lambda_a(b), \, \lambda_a(b)^- \circ a \circ b\right),
\end{align*}
for all $a,b \in S$. This formulation makes the similarity with the solution in \cref{teo_solu_weak} particularly transparent. The only difference lies in the idempotent term $a \circ a^-$, which, in the case of weak braces, is absorbed by $\lambda_a(b)$, since $a \circ a^-=-a+a$.

\medskip

For any inverse brace $(S, +, \circ)$, let us introduce the map
$\rho: S \to S^S,\, b \mapsto \rho_b$ defined by $$\rho_b(a)=\sigma_a(b)^-  \circ a \circ b,$$
for all $a, b \in S$. Thus, $\rho_b(a)$ denotes the second component of the map $r_S$. \\
In the following, we prove an important connection between $\sigma_a$ and $\rho_b$, which will be useful in what follows.

\begin{lemma}\label{sigma_rho}
Let $(S,+,\circ)$ be an inverse brace such that \eqref{A} is satisfied. Then, for all $a, b \in S$, we have $\sigma_a(b)=a \circ b \circ \rho_b(a)^-.$
\end{lemma}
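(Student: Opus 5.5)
The plan is to unfold $\rho_b(a)$ and reduce the claim to an idempotent absorption identity, which \cref{A*} makes transparent. By definition $\rho_b(a)=\sigma_a(b)^-\circ a\circ b$. In the inverse semigroup $(S,\circ)$ we have $(xy)^-=y^-x^-$ and $(x^-)^-=x$, so
\begin{align*}
\rho_b(a)^-=(a\circ b)^-\circ\sigma_a(b).
\end{align*}
Hence the right-hand side of the statement becomes
\begin{align*}
a\circ b\circ\rho_b(a)^-=a\circ b\circ (a\circ b)^-\circ\sigma_a(b)=a\circ b\circ b^-\circ a^-\circ\sigma_a(b).
\end{align*}
So it suffices to show that the idempotent $a\circ b\circ b^-\circ a^-$ fixes $\sigma_a(b)$ on the left.

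The key step is to rewrite $\sigma_a(b)$ using \cref{A*}, which is available because \eqref{A} holds:
\begin{align*}
\sigma_a(b)=a\circ b\circ b^-\circ\left(a^-+b\right).
\end{align*}
Substituting this gives
\begin{align*}
a\circ b\circ b^-\circ a^-\circ a\circ b\circ b^-\circ\left(a^-+b\right).
\end{align*}
The factors $b\circ b^-$ and $a^-\circ a$ are idempotents of $(S,\circ)$, and idempotents commute in an inverse semigroup. Commuting them and using $(b\circ b^-)\circ(b\circ b^-)=b\circ b^-$, the prefix collapses:
\begin{align*}
a\circ (b\circ b^-)\circ(a^-\circ a)\circ(b\circ b^-)=a\circ a^-\circ a\circ b\circ b^-=a\circ b\circ b^-.
\end{align*}
The whole expression therefore equals $a\circ b\circ b^-\circ(a^-+b)$, which is $\sigma_a(b)$ by \cref{A*} again. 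This proves the claim.

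I do not expect a real obstacle. The only nontrivial input is the expression for $\sigma_a(b)$ in \cref{A*}, which is exactly where \eqref{A} is used. Without it, one would have to check directly that $a\circ b\circ b^-\circ a^-$ absorbs $a\circ(a^-+b)$, and that is not clear from the original definition of $\sigma_a(b)$ alone.
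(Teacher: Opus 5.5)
Your proof is correct and follows essentially the same route as the paper. Both arguments unfold $\rho_b(a)^-=b^-\circ a^-\circ\sigma_a(b)$, collapse the idempotents to reach $a\circ b\circ b^-\circ(a^-+b)$, and identify this with $\sigma_a(b)$. The only difference is that you cite \cref{A*} for that last identification, whereas the paper re-derives it inline as $\sigma_{a\circ b\circ b^-}(b)=\sigma_a\sigma_{b\circ b^-}(b)=\sigma_a(b)$, using that $\sigma$ is a homomorphism (\cref{Aequivsigma}).
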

\begin{proof} 
  If $a,b \in S,$ we have
    \begin{align*}
        a\circ b \circ \rho_b(a)^-&=a\circ b \circ b^- \circ a^- \circ a \circ \left(a^- +b\right)\\
        &=a\circ b \circ b^- \circ  \left(a^- +b\right) \\
        &=a\circ b \circ b^- \circ a^- +\lambda_{a\circ b \circ b^-}\left(b\right) \\
        &=\sigma_{a\circ b \circ b^-}\left(b\right) &\mbox{by \cref{rem_sigma_lambda}}\\
        &=\sigma_{a}\sigma_{b \circ b^-}\left(b\right)  &\mbox{by \eqref{A}}\\
        &=\sigma_a\left(b \circ b^--b \circ b^-+b \circ b^- \circ b\right)\\
        &=\sigma_a(b) &\mbox{by \cref{lemma_ab}(2)}
    \end{align*}
 which is our claim.
\end{proof}

\smallskip

\begin{rem}
    Observe that if $(S, +, \circ)$ is an inverse brace such that $(S, +)$ is a commutative semigroup and satisfying \eqref{A}, then
        $\rho_b(a)=\left(a^-+b \right)^- \circ b$, for all $a, b \in S$.\\
    Indeed, for $a, b \in S$, we have
    \begin{align*}
        \rho_b(a)^-
        &=b^- \circ a^- -b^- \circ a^- \circ a + b^- \circ a^-  \circ a \circ b\\
        &=b^- \circ a^- -b^- \circ a^- \circ a +\sigma_{b^-}\left(a^- \circ a\right)\\
        &=b^- \circ a^-  - b^- \circ a^- \circ a+b^- \circ a^- \circ a -b^-+b^- \circ b\\
        &=b^- \circ a^- -b^-+b^- \circ b &\mbox{by \cref{lemma_1}(1)}\\
        &=b^- \circ \left(a^-+b\right).
    \end{align*}
\end{rem}

\smallskip

In the following result, we study the condition under which the map $\rho$ is an anti-homomorphism and the product of the two components of $r_S(a,b)$ in \eqref{r_S} coincides with $a \circ b$, as is the case for a weak brace. The condition imposed below will be necessary for $r_S$ to define a solution. 
\begin{prop}\label{condizione_a_2}
  Let $(S, +, \circ)$ be an inverse brace satisfying \eqref{A} and such that \begin{align}
    &\sigma_a(b)\circ \sigma_a(b)^-=\sigma_a\left(b \circ b^-\right) 
    \tag{II}\label{B}
    \end{align}
    holds, for all $a, b \in S$. Then, the following hold:
    \begin{enumerate}
        \item[\rm{(1)}] $\sigma_a(b)\circ \rho_b(a)=a \circ b$, for all $a, b \in S$;
        \item[\rm{(2)}] the map $\rho: S \to S^S, b \mapsto \rho_b$
    is an anti-homomorphism from the inverse semigroup $(S, \circ)$ into the monoid $S^S$.
    \end{enumerate}
\end{prop}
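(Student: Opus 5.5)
For (1) we only need (II), \eqref{A} and the fact that idempotents of $(S,\circ)$ commute. By definition of $\rho_b$,
\[
\sigma_a(b)\circ\rho_b(a)=\sigma_a(b)\circ\sigma_a(b)^-\circ a\circ b .
\]
By \eqref{B} the idempotent $\sigma_a(b)\circ\sigma_a(b)^-$ equals $\sigma_a(b\circ b^-)$. By \eqref{A} this is $a\circ b\circ b^-\circ a^-$. The product therefore becomes $a\circ b\circ b^-\circ a^-\circ a\circ b$. Since $b\circ b^-$ and $a^-\circ a$ commute, this is $a\circ a^-\circ a\circ b\circ b^-\circ b=a\circ b$.

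For (2) we must show $\rho_{b\circ c}(a)=\rho_c\rho_b(a)$ for all $a,b,c$. The strategy is first to prove the auxiliary ``cocycle'' identity
\[
\sigma_a(b\circ c)=\sigma_a(b)\circ\sigma_{\rho_b(a)}(c).
\]
Granting it, we get
\[
\rho_{b\circ c}(a)=\sigma_{\rho_b(a)}(c)^-\circ\sigma_a(b)^-\circ a\circ b\circ c=\sigma_{\rho_b(a)}(c)^-\circ\rho_b(a)\circ c=\rho_c\rho_b(a),
\]
using $(x\circ y)^-=y^-\circ x^-$.

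To prove the auxiliary identity, compute both sides additively, using \cref{lemma_ab}(2) and \cref{key2.4}. On the left, write $b\circ c=b+\lambda_b(c)$ and apply left distributivity:
\[
\sigma_a(b\circ c)=a\circ(a^-+b)-a+a\circ\lambda_b(c)=\sigma_a(b)+\lambda_{a\circ b}(c).
\]
On the right, set $u=\rho_b(a)$. By \cref{rem_sigma_lambda}, $\sigma_u(c)=u\circ u^-+\lambda_u(c)$. Then \cref{lemma_ab}(2) and the endomorphism and homomorphism properties of $\lambda$ give
\[
\sigma_a(b)\circ\sigma_u(c)=\sigma_a(b)+\lambda_{\sigma_a(b)}(u\circ u^-)+\lambda_{\sigma_a(b)\circ u}(c).
\]
By part (1), $\sigma_a(b)\circ u=a\circ b$, so the last term is $\lambda_{a\circ b}(c)$. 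Reading \cref{lemma_ab}(2) backwards, the first two summands equal $\sigma_a(b)\circ u\circ u^-$, which is $a\circ b\circ u^-$ by part (1). This equals $\sigma_a(b)$ by \cref{sigma_rho}. Hence both sides coincide.

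The main obstacle is spotting and verifying this cocycle identity, in particular absorbing the extra idempotent term $\lambda_{\sigma_a(b)}(u\circ u^-)$. That step is exactly where \cref{sigma_rho} (and thus \eqref{A}) together with part (1) (and thus \eqref{B}) enter.
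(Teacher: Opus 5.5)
Your proof is correct. Part (1) is exactly the paper's computation, but for part (2) you take a genuinely different route.

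The paper first extracts from (1) the identity $\rho_b(a)^-\circ\rho_b(a)=b^-\circ a^-\circ a\circ b$. It then expands both $(\rho_{b\circ c}(a))^-$ and $(\rho_c\rho_b(a))^-$ by left distributivity into explicit three-term additive expressions and matches them. The only nontrivial cancellation there is $y-y+y\circ c=y\circ c$, from \cref{lemma_ab}(2).

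You instead first establish the cocycle identity $\sigma_a(b\circ c)=\sigma_a(b)\circ\sigma_{\rho_b(a)}(c)$. You then obtain $\rho_{b\circ c}=\rho_c\rho_b$ by inverting it in one line.

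The paper's argument is more economical in prerequisites: \eqref{A} and \eqref{B} enter only through part (1), and \cref{sigma_rho} is never needed. Your argument, by contrast, genuinely requires \cref{sigma_rho} to absorb the extra idempotent summand $\lambda_{\sigma_a(b)}(u\circ u^-)$. What your route buys is structure. The identity you isolate is the inverse-brace analogue of the familiar skew brace relation $\lambda_a(b\circ c)=\lambda_a(b)\circ\lambda_{\rho_b(a)}(c)$, which is one of the matched-pair compatibility conditions for $(\sigma,\rho)$. So it explains why $\rho$ reverses products rather than verifying it by brute expansion, and it is a reusable fact in its own right.
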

\begin{proof}
    First, for all $a, b \in S$, 
we have
\begin{align*}
    \sigma_a(b)\circ \rho_b(a)&
    =\sigma_a\left(b \circ b^-\right)\circ a \circ b=a \circ b \circ b^- \circ a^- \circ a \circ b=a\circ b,
\end{align*}
proving point (1). This implies that, for all $a, b \in S$,
\begin{align}\label{rho_rhomeno}
    \rho_b(a)^- \circ \rho_b(a)=b^- \circ a^- \circ a \circ b.
\end{align}
In fact, if $a, b \in S$,
\begin{align*}
    \rho_b(a)^- \circ \rho_b(a)&=b^- \circ a^- \circ \sigma_a(b)\circ \rho_b(a)=b^- \circ a^- \circ a \circ b.
\end{align*}
Thus, if $a, b, c \in S$, we get
\begin{align*}
    \left(\rho_{b \circ c}(a)\right)^-&=c^- \circ b^- \circ a^- \circ a \circ \left(a^-+b \circ c\right)\\
    &=c^- \circ b^- \circ a^- - c^- \circ b^- \circ a^-\circ a+ c^- \circ b^- \circ a^- \circ a\circ b \circ c
\end{align*}
and, using \eqref{rho_rhomeno},
\begin{align*}
   &\left( \rho_c\rho_b(a)\right)^-\\
   &=c^- \circ \rho_b(a)^--c^- \circ \rho_b(a)^- \circ \rho_b(a)+c^- \circ \rho_b(a)^- \circ \rho_b(a) \circ c\\
   &= c^- \circ \rho_b(a)^-- c^- \circ b^- \circ a^- \circ a \circ b+ c^- \circ b^- \circ a^- \circ a \circ b \circ c\\
   &=c^- \circ b^- \circ a^- \circ a \circ \left(a^-+b\right)- c^- \circ b^- \circ a^- \circ a \circ b+ c^- \circ b^- \circ a^- \circ a \circ b \circ c\\
   &=c^- \circ b^- \circ a^--c^- \circ b^- \circ a^- \circ a+c^- \circ b^- \circ a^- \circ a\circ b\\
   &\quad - c^- \circ b^- \circ a^- \circ a \circ b+ c^- \circ b^- \circ a^- \circ a \circ b \circ c\\
   &=c^- \circ b^- \circ a^- - c^- \circ b^- \circ a^-\circ a+ c^- \circ b^- \circ a^- \circ a\circ b \circ c 
\end{align*}
where the last equality follows from \cref{lemma_ab}(2).  
Therefore, the claim in (2) follows.
\end{proof}

\smallskip

\begin{theor}\label{teo_solu}
  Let $(S, +, \circ)$ be an inverse brace such that the map $\sigma: S \to S^S, a \mapsto \sigma_a$ is a homomorphism from the inverse semigroup $(S, \circ)$ to the monoid $S^S$, namely \eqref{A} is satisfied. Then the map $r_S:S \times S\to S\times S$ defined by $$r_S(a,b)=\left(\sigma_a(b), \sigma_a(b)^- \circ a \circ b\right),$$
 for all $a, b \in S$, is a solution if and only if the condition
    \begin{align}
    &\sigma_a(b)\circ \sigma_a(b)^-=\sigma_a\left(b \circ b^-\right), 
    \tag{II}
    \end{align}
    is satisfied, for all $a, b \in S$.
\end{theor}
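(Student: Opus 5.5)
The plan is to use the standard reduction of the braid relation for a map $r(a,b)=(\sigma_a(b),\rho_b(a))$ to three identities. Expanding both sides of $(r\times\id)(\id\times r)(r\times\id)=(\id\times r)(r\times\id)(\id\times r)$ on a triple $(a,b,c)$ and comparing components, $r_S$ is a solution if and only if, for all $a,b,c\in S$,
\begin{align*}
&\sigma_a\sigma_b(c)=\sigma_{\sigma_a(b)}\sigma_{\rho_b(a)}(c),\\
&\rho_{\sigma_{\rho_b(a)}(c)}\bigl(\sigma_a(b)\bigr)=\sigma_{\rho_{\sigma_b(c)}(a)}\bigl(\rho_c(b)\bigr),\\
&\rho_c\rho_b(a)=\rho_{\rho_c(b)}\rho_{\sigma_b(c)}(a).
\end{align*}

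For sufficiency, assume \eqref{A} and \eqref{B}. By \cref{condizione_a_2}(1) we have $\sigma_a(b)\circ\rho_b(a)=a\circ b$. Since $\sigma$ is a homomorphism (\cref{Aequivsigma}), the first identity follows at once:
$$\sigma_{\sigma_a(b)}\sigma_{\rho_b(a)}=\sigma_{\sigma_a(b)\circ\rho_b(a)}=\sigma_{a\circ b}=\sigma_a\sigma_b.$$
The third identity comes out symmetrically. By \cref{condizione_a_2}(2), $\rho$ is an anti-homomorphism, so
$$\rho_{\rho_c(b)}\rho_{\sigma_b(c)}=\rho_{\sigma_b(c)\circ\rho_c(b)}=\rho_{b\circ c}=\rho_c\rho_b.$$
For the middle identity, write $u=\sigma_a(b)$, $v=\rho_b(a)$, $w=\sigma_v(c)$, $z=\rho_c(v)$; the left side is $\rho_w(u)=\sigma_u(w)^-\circ u\circ w$. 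Using \cref{sigma_rho}, $\sigma_u(w)=u\circ w\circ\rho_w(u)^-$. The idea is to show both sides equal $X:=\sigma_u(w)^-\circ a\circ b\circ c\circ z^-\ldots$; more concretely, I would compute the common product $a\circ b\circ c=u\circ w\circ z$. That holds because $u\circ v=a\circ b$ and $w\circ z=v\circ c$. Likewise, with $w'=\sigma_b(c)$, $z'=\rho_c(b)$, $u'=\sigma_a(w')=\sigma_a\sigma_b(c)=\sigma_u(w)$ (by the first identity), $v'=\rho_{w'}(a)$, we get $a\circ b\circ c=u'\circ v'\circ z'$. Both middle terms have the form $\rho_w(u)$ and $\sigma_{v'}(z')$, so the claim reduces to
$$\sigma_u(w)^-\circ u\circ w=\sigma_{v'}(z')=v'\circ z'\circ\rho_{z'}(v')^-.$$
Using \cref{sigma_rho} and \eqref{B}, both sides should equal $u'^-\circ a\circ b\circ c\circ(\text{third component})^-$. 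The third components agree by the third identity, so it suffices to check the idempotents. Here \eqref{B}, in the form $\sigma_a(b)\circ\sigma_a(b)^-=a\circ b\circ(a\circ b)^-$, and \eqref{rho_rhomeno} control the idempotent factors. I expect this step to be the main obstacle: the idempotent bookkeeping must be done carefully, using that idempotents commute in $(S,\circ)$.

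For necessity, assume $r_S$ is a solution and extract \eqref{B} from a suitable specialization. I would take the first identity with $c$ chosen idempotent, for instance $c=(a\circ b)^-\circ a\circ b$ or $c=b^-\circ b$. By \eqref{A}, the left side is then $\sigma_{a\circ b}(c)=a\circ b\circ c\circ(a\circ b)^-$. The right side is $\sigma_{\sigma_a(b)}(\sigma_{\rho_b(a)}(c))$; expanded with \eqref{A} it becomes $\sigma_a(b)\circ\rho_b(a)\circ c\circ\rho_b(a)^-\circ\sigma_a(b)^-$. Comparing these, and using that $\sigma_a(b)\circ\sigma_a(b)^-\le_\circ a\circ a^-$ (from \cref{rem_sigma_lambda}, since $\sigma_a(b)=a\circ a^-\circ\lambda_a(b)$), should force $\sigma_a(b)\circ\sigma_a(b)^-=\sigma_a(b\circ b^-)$. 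If this does not close directly, I would instead specialize the second identity with $c=b^-$ to get the needed equality.
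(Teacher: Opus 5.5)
\paragraph{Sufficiency.} This part follows the paper. (Y1) and (Y3) come from $\sigma_a(b)\circ\rho_b(a)=a\circ b$ together with the homomorphism and anti-homomorphism properties. For (Y2), your target value $(\sigma_a\sigma_b(c))^-\circ a\circ b\circ c\circ(\rho_c\rho_b(a))^-$ is exactly the paper's.

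There is no idempotent bookkeeping left to do:
\begin{itemize}
\item By \cref{sigma_rho}, $w=\sigma_v(c)=v\circ c\circ z^-$, so $\rho_w(u)=u'^-\circ u\circ v\circ c\circ z^-=u'^-\circ a\circ b\circ c\circ z^-$.
\item On the other side, $v'\circ z'=u'^-\circ a\circ w'\circ z'=u'^-\circ a\circ b\circ c$.
\item By (Y3), $\rho_{z'}(v')=z$, so \cref{sigma_rho} gives $\sigma_{v'}(z')=u'^-\circ a\circ b\circ c\circ z^-$ as well.
\end{itemize}

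\paragraph{Necessity: the gap.} Put $f=\sigma_a(b)\circ\sigma_a(b)^-$ and $g=a\circ b\circ b^-\circ a^-=\sigma_a(b\circ b^-)$.

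Specialize (Y1) at $c=b^-\circ b$ and expand with \eqref{A}, as you propose. The left side is $g$. Without \eqref{B} you only have $\sigma_a(b)\circ\rho_b(a)=f\circ a\circ b$, so the right side is $f\circ a\circ b\circ c\circ b^-\circ a^-\circ f=f\circ g$. Hence (Y1) yields only $g\le_\circ f$.

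The inequality $f\le_\circ a\circ a^-$ you want to invoke cannot supply the converse. Indeed $g\le_\circ a\circ a^-$ as well, and from $g\le_\circ f\le_\circ a\circ a^-$ alone you cannot conclude $f=g$.

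The missing ingredient is $f\le_\circ g$, which holds in every inverse brace satisfying \eqref{A}. By \cref{A*}, and since $a\circ b\circ b^-=a\circ b\circ b^-\circ a^-\circ a$ (idempotents commute),
$$\sigma_a(b)=a\circ b\circ b^-\circ(a^-+b)=a\circ b\circ b^-\circ a^-\circ a\circ(a^-+b)=g\circ\sigma_a(b).$$
Hence $f=g\circ f$. With this added, your specialization gives $f=g$, which is \eqref{B}.

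This is essentially what the paper does. It takes $c=-b^-+b^-$, which is an additive idempotent and hence a multiplicative one by \cref{prop_idemp}. For this $c$ one has $\sigma_b(c)=b\circ b^-$, and the right-hand side evaluates directly to $f$. \cref{A*} enters precisely at the step $a\circ b\circ b^-\circ(a^-+b)=\sigma_a(b)$. Your fallback via (Y2) with $c=b^-$ is unverified and unnecessary.
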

\begin{proof}
First, assume that \eqref{B} holds. It is a routine computation to check that the map $r_S(a,b)=(\sigma_a(b), \rho_b(a))$ is a solution if and only if the following three fundamental equalities hold
\begin{align}
     &\label{first} \sigma_a\sigma_b(c)=\sigma_{\sigma_a\left(b\right)}\sigma_{\rho_b\left(a\right)}\left(c\right)\tag{Y1},\\
    &  \label{second}\sigma_{\rho_{\sigma_b\left(c\right)}\left(a\right)}\rho_c\left(b\right)=\rho_{\sigma_{\rho_b\left(a\right)}\left(c\right)}\sigma_a\left(b\right)\tag{Y2},\\
      &\label{third}\rho_c\rho_b(a)=\rho_{\rho_c\left(b\right)}\rho_{\sigma_b\left(c\right)}\left(a\right)\tag{Y3},
  \end{align} 
for all $a,b,c \in S$. \\
For $a, b, c \in S$, using \cref{condizione_a_2}, we obtain 
\begin{align*}
    \sigma_a\sigma_b(c)= \sigma_{a \circ b}(c)=\sigma_{\sigma_a(b)\circ \rho_b(a)}(c)=\sigma_{\sigma_a\left(b\right)}\sigma_{\rho_b\left(a\right)}\left(c\right)
\end{align*}
and
\begin{align*}
    \rho_c\rho_b(a)= \rho_{b \circ c}(a)=\rho_{\sigma_b(c)\circ \rho_c(b)}(a)=\rho_{\rho_c\left(b\right)}\rho_{\sigma_b\left(c\right)}\left(a\right).
\end{align*}
Hence, both \eqref{first} and \eqref{third} hold.
Moreover, 
\begin{align*}
    \rho_{\sigma_{\rho_b(a)}(c)}\sigma_a(b)&=\left(\sigma_{\sigma_a(b)}\sigma_{\rho_b(a)}(c)   \right)^-\circ \sigma_a(b)\circ\sigma_{\rho_b(a)}(c)
    \\&=\left(\sigma_{a}\sigma_b(c)\right)^- \circ \sigma_a(b)\circ \sigma_{\rho_b(a)}(c) &\mbox{by \eqref{first}}\\
    &=\left(\sigma_{a}\sigma_b(c)\right)^- \circ \sigma_a(b)\circ \rho_b(a)\circ c \circ\left(\rho_c\rho_b(a)\right)^-  &\mbox{by \cref{sigma_rho}}\\
    &=\left(\sigma_{a}\sigma_b(c)\right)^- \circ a \circ b \circ c \circ \left(\rho_c\rho_b(a)\right)^-  &\mbox{by \cref{condizione_a_2}(1)}\\
    &=\left(\sigma_{a}\sigma_b(c)\right)^- \circ a \circ \sigma_b(c) \circ \rho_c(b) \circ \left(\rho_c \rho_b(a)   \right)^-&\mbox{by \cref{condizione_a_2}(1)}\\
    &=\rho_{\sigma_b(c)}(a) \circ \rho_c(b) \circ \left(\rho_{\rho_c(b)}\rho_{\sigma_b(c)}(a)\right)^-&\mbox{by \eqref{third}}\\
&=\sigma_{\rho_{\sigma_b\left(c\right)}\left(a\right)}\rho_c\left(b\right). &\mbox{by \cref{sigma_rho}}
\end{align*}
Therefore, $r_S$ is a solution.\\
Conversely, assume that $r_S$ is a solution. From equation \eqref{first}, taking $c=-b^-+b^-$ we obtain
   \begin{align*}
\sigma_a\sigma_b\left(-b^-+b^-\right)=\sigma_{\sigma_a\left(b\right)}\sigma_{\rho_b\left(a\right)}\left(-b^-+b^-\right).  
   \end{align*}
   The left-hand side is equal to $
  \sigma_a\sigma_b\left(-b^-+b^-\right)=\sigma_a\left(b \circ \left( b^--b^-+b^-\right)\right)=\sigma_a\left(b \circ b^- \right)$. About the right-hand side, we have
  \begin{align*}
&\sigma_{\sigma_a\left(b\right)}\sigma_{\rho_b\left(a\right)}\left(-b^-+b^-\right)\\
   &=\sigma_{\sigma_a\left(b\right)}\left(\rho_b(a) \circ \left(b^- \circ a^- \circ \sigma_a(b)-b^-+b^-\right)\right)\\
   &=\sigma_{\sigma_a\left(b\right)}\left(\rho_b(a) \circ b^- \circ a^- \circ \sigma_a(b)\right) &\mbox{by \cref{lemma_1}(1)}\\
&=\sigma_{\sigma_a\left(b\right)}\left(\rho_b(a) \circ \rho_b(a)^-\right)\\
  &=\sigma_a(b) \circ \sigma_a(b)^--\sigma_a(b)+ \sigma_a(b) \circ \rho_b(a) \circ \rho_b(a)^-\\
    &=\sigma_a(b) \circ \sigma_a(b)^--\sigma_a(b)+ a \circ b \circ b^- \circ a^- \circ \sigma_a(b)\\
   &=\sigma_a(b) \circ \sigma_a(b)^--\sigma_a(b)+a \circ b \circ b^- \circ \left(a^-+b\right)\\
   &= \sigma_a(b) \circ \sigma_a(b)^--\sigma_a(b)+\sigma_a(b) &\mbox{by \cref{A*}}\\
   &=\sigma_a(b) \circ \sigma_a(b)^- &\mbox{by \cref{lemma_1}(1)}
  \end{align*}
Therefore, \eqref{B} is satisfied.
\end{proof}

\smallskip

\begin{rem}
 Observe that whenever $(S,+, \circ)$ is a weak brace, condition \eqref{B} is satisfied, since $r_S$ always defines a solution.
\end{rem}

\smallskip

\begin{cor}\label{cor_quasi_non_deg}
    Let $(S, +, \circ)$ be an inverse brace satisfying \eqref{A} and \eqref{B} and such that $(S, \circ)$ is a Clifford semigroup. Then the map $r_S:S \times S\to S\times S$ defined by $r_S(a,b)=\left(\sigma_a(b), \rho_b(a)\right)$,
 for all $a, b \in S$, is quasi non-degenerate solution.
    \begin{proof}
       Clearly, by \cref{teo_solu}, the map $r_S$ is a solution. Moreover, $r_S$ is quasi non-degenerate, since the inverses of the maps $\sigma_a$ and $\rho_a$ are precisely the maps $\sigma_{a^-}$ and $\rho_{a^-}$, respectively, as a consequence of
       \cref{Aequivsigma} and \cref{condizione_a_2}(2).
    \end{proof}
\end{cor}

\smallskip

\begin{ex}\label{ex-4ele}
    Let $S=\{a,b,c,d\}$ and consider the following operations on $S$:
    $$\begin{tabular}{c | c c c c c  c}
    $+$ & $a$ & $b$ & $c$  &$d$ \\
    \hline
   $a$& $a$ & $a$ & $a$ &$d$\\
    $b$ & $a$ & $b$ & $c$ & $d$\\
    $c$& $a$ & $c$ & $b$ & $d$\\
    $d$ & $d$& $d$& $d$& $a$
\end{tabular}  \qquad \qquad{\rm and}\qquad \qquad \begin{tabular}{c | c c c c c  c}
    $\circ$ & $a$ & $b$ & $c$ &$d$  \\
    \hline
   $a$& $a$ & $a$ & $a$ & $d$\\
    $b$ & $a$ & $b$ & $c$ & $d$\\
    $c$& $a$ & $c$ & $c$ & $d$\\
    $d$& $d$ & $d$ & $d$ & $a$
\end{tabular}$$
Then $(S, +, \circ)$ is an inverse brace 
 satisfying \eqref{A} and \eqref{B}. Thus, by \cref{teo_solu}, the map $r_S$ is a solution. Moreover, since $(S, \circ)$ is Clifford, by \cref{cor_quasi_non_deg}, the map $r_S$ is quasi non-degenerate.

 A short \texttt{GAP} routine verifies that there are $11$ inverse braces of size $4$ satisfying the identities \eqref{A} and \eqref{B} that are not weak braces. 
\end{ex}

\smallskip

In the particular case of inverse braces $(S, +, \circ)$ whose multiplicative structure is a semilattice, the condition \eqref{A} is trivially satisfied (see \cref{ex_semilattice}) and identity \eqref{B} follows immediately by the fact that $x^-=x$, for all $x \in S$. Thus, this family of inverse braces always give rise to quasi non-degenerate solutions of the form in \eqref{r_S}.
\begin{cor}\label{cor_semilattice}
Let $(S, +, \circ)$ be an inverse brace such that $(S, \circ)$ is a semilattice. Then the map $r_S:S \times S\to S\times S$ given by $r_S(a,b)=\left(\sigma_a(b), \rho_b(a)\right)$,
 for all $a, b \in S$, is a quasi non-degenerate solution. 
\end{cor}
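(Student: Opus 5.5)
The plan is to reduce the statement to \cref{cor_quasi_non_deg}, which already covers inverse braces satisfying \eqref{A} and \eqref{B} with $(S,\circ)$ Clifford. A semilattice is a commutative inverse semigroup in which every element is idempotent, hence in particular a Clifford semigroup. So it remains to verify \eqref{A} and \eqref{B}.

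For \eqref{A}, I would invoke \cref{ex_semilattice}. There, using \cref{lemma_ab}(2) together with $a\circ a=a$, one gets
\[
\sigma_a(e)=a\circ(a+e)=a\circ a-a+a\circ e=a+\lambda_a(e)=a\circ e=a\circ e\circ a .
\]
Since $a^-=a$ in a semilattice, the right-hand side is $a\circ e\circ a^-$, which is exactly \eqref{A}. Equivalently, by \cref{Aequivsigma}, $\sigma$ is a homomorphism.

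For \eqref{B}, every element $x$ of a semilattice satisfies $x^-=x$ and $x\circ x=x$. Hence
\[
\sigma_a(b)\circ\sigma_a(b)^-=\sigma_a(b)
\qquad\text{and}\qquad
\sigma_a(b\circ b^-)=\sigma_a(b),
\]
so \eqref{B} holds trivially.

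Then \cref{teo_solu} shows that $r_S$ is a solution, and \cref{cor_quasi_non_deg} gives quasi non-degeneracy, with $\sigma_a^-=\sigma_{a^-}=\sigma_a$ and $\rho_b^-=\rho_b$. There is no real obstacle here. The only point needing care is that the idempotency of the maps $\sigma_a$ and $\rho_b$, and the commutation of $\sigma_a^0$ with each $\sigma_c$, must actually follow from the homomorphism property. This comes from $\sigma_a\sigma_a=\sigma_{a\circ a}=\sigma_a$ and from commutativity of $\circ$ via \cref{Aequivsigma}. For $\rho$, the same argument uses the anti-homomorphism property in \cref{condizione_a_2}(2).
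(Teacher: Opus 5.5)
Your proposal is correct and follows the paper's argument. The paper gets \eqref{A} from \cref{ex_semilattice} and \eqref{B} from $x^-=x$, then applies \cref{teo_solu} and, since a semilattice is Clifford, \cref{cor_quasi_non_deg}; your extra check that $\sigma_a^-=\sigma_a$ and $\rho_b^-=\rho_b$ satisfy the axioms via the (anti-)homomorphism properties and commutativity just spells out what the paper leaves to \cref{cor_quasi_non_deg}.
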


\smallskip
The following are applications of \cref{cor_semilattice}.
 \begin{exs}\hspace{1mm}
 \begin{enumerate}
 \item The map $r_{\mathbb{Z}_2}$ associated with the inverse brace $(\mathbb{Z}_2, +, \circ)$ in \cref{esempiowang} is a quasi non-degenerate solution. 
     \item The map $r_S$ associated with the inverse brace $(S, +, \circ)$ in  \cref{ex_cliff} is a quasi non-degenerate solution. 
 \end{enumerate}
 \end{exs}

\smallskip

The following result is consistent with \cite[Proposition 14]{CaMaMiSt22} and provides information on the order of the solution $r_S$ in certain special cases.
\begin{prop} \label{commutativo} 
    Let $(S,+,\circ)$ be an inverse brace  satisfying \eqref{A} and \eqref{B} and such that $(S,+)$ is
    a commutative semigroup. Then, the map $r_S:S \times S\to S\times S$ given by $r_S(a,b)=\left(\sigma_a(b), \rho_b(a)\right)$,
 for all $a, b \in S$, is a cubic solution, that is, $r_S^3=r_S.$\\
     In particular, $r_S$ is bijective if and only if $S$ is a brace. 
\end{prop}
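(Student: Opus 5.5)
The plan is to compute $r_S^2$ explicitly and then apply $r_S$ once more, using the identities available under \eqref{A} and \eqref{B}. Set $u=\sigma_a(b)$ and $v=\rho_b(a)$. By \cref{condizione_a_2}(1) we have $u\circ v=a\circ b$. Since $(S,+)$ is commutative, additive inverses distribute over sums, and sums of additive idempotents may be reordered freely. This is what should make the computation close up.

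\textbf{First component of $r_S^2$.} This is $\sigma_u(v)$. By \cref{rem_sigma_lambda}, $\sigma_u(v)=u\circ u^-+\lambda_u(v)=u\circ u^- - u + a\circ b$. By \eqref{B} and \eqref{A}, $u\circ u^-=\sigma_a(b\circ b^-)=a\circ b\circ b^-\circ a^-$. Also $u=a\circ a^- - a + a\circ b$. Substituting and using commutativity, I expect to obtain
\[
\sigma_u(v)= a + f,
\]
where $f$ is a sum of additive idempotents and of terms of the form $x\circ x^-$, built from $a\circ b\circ b^-\circ a^-$, $a\circ a^-$, $-a\circ b+a\circ b$ and $-a+a$. \cref{lemma_1}(1) and \cref{lemma_ab}(2), namely $x\circ y=x\circ y-x+x$ and $x\circ y=x+\lambda_x(y)$, should let me absorb most of these terms.

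\textbf{Second component of $r_S^2$.} This is $\rho_v(u)$. I will use the remark giving $\rho_b(a)=(a^-+b)^-\circ b$ in the commutative case, together with \cref{condizione_a_2}(2) and the formula \eqref{rho_rhomeno}. I expect it to be of the form $g\circ b$ (or $b$ plus idempotents) for a suitable multiplicative idempotent $g$.

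\textbf{Third application.} With these descriptions, I apply $r_S$ once more and show that the idempotent perturbations are absorbed, giving $\sigma_{a+f}(g\circ b)=\sigma_a(b)$ and the analogous identity for $\rho$. For this I will again use \cref{lemma_1}(1), which says that $a\circ b$ absorbs $-a+a$, and \cref{A*}, which gives $\sigma_a(b)=a\circ b\circ b^-\circ(a^-+b)$, to strip off the idempotents. An alternative for the second component is to use \cref{sigma_rho}: once the first components of $r_S^3$ and $r_S$ agree, the second components should follow from the product relation $\sigma\circ\rho=a\circ b$ together with \eqref{rho_rhomeno}.

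\textbf{Main obstacle.} The hard step is pinning down the exact normal form of $r_S^2(a,b)$. Multiplicative idempotents such as $a\circ a^-$ are not additive idempotents in general, so I must track which absorptions are legitimate, relying only on \cref{lemma_1}. I would first test the computation on \cref{ex_cliff} and \cref{ex-4ele} to guess the normal form.

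\textbf{Bijectivity.} If $S$ is a brace, then $r_S$ is the usual involutive solution, and in particular it is bijective. Conversely, if $r_S$ is bijective, then $r_S^3=r_S$ forces $r_S^2=\id$. I would evaluate $r_S^2=\id$ on pairs $(a,e)$ with $e\in\E(S,\circ)$, using the normal form of $r_S^2$. This should force $a=a\circ e$ for all $a$, so all multiplicative idempotents act as identities and there is a single idempotent. Hence $(S,\circ)$ is a group, $S$ is a skew brace by \cref{circ_gruppo}, and it is a brace because $(S,+)$ is commutative.
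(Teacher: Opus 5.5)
You have the right skeleton: compute $r_S^3$ componentwise, use $u\circ v=a\circ b$ from \cref{condizione_a_2}(1), recover the second component from the first, and for bijectivity cancel $r_S$ to get $r_S^2=\id$ and show that $(S,\circ)$ has a single idempotent. However, the core of the cubic identity is never proved. You only expect a normal form $\sigma_u(v)=a+f$, $\rho_v(u)=g\circ b$. You then assert $\sigma_{a+f}(g\circ b)=\sigma_a(b)$ because ``idempotent perturbations are absorbed,'' and you yourself flag this as the main obstacle, to be guessed from examples. The tools you cite, \cref{lemma_1}(1) and \cref{A*}, say nothing about the multiplicative inverse $(a+f)^-$ that $\sigma_{a+f}$ involves. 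So, as written, the first component of $r_S^3(a,b)$ is not computed, and the bijectivity argument inherits the same gap.

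The missing idea is that you need no normal form of $r_S^2$: treat the third application exactly as you treated the second. Put $u=\sigma_a(b)$, $v=\rho_b(a)$, $X=\sigma_u(v)$ and $Y=\rho_v(u)$. By \cref{rem_sigma_lambda}, $\sigma_X(Y)=X\circ X^- -X+X\circ Y$, so only $X\circ X^-$, $X\circ Y$ and $-X$ are needed.

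First, $X\circ Y=u\circ v=a\circ b$ by \cref{condizione_a_2}(1) applied twice. Second, $X\circ X^-=\sigma_u(v\circ v^-)=(u\circ v)\circ(u\circ v)^-=a\circ b\circ b^-\circ a^-=u\circ u^-$, using \eqref{B} and \eqref{A} for the pair $(u,v)$ and again for $(a,b)$. Third, keep $X=u\circ u^- -u+a\circ b$ written in terms of $u$; substituting $u=a\circ a^- -a+a\circ b$, as you do, is what hides the cancellation. Commutativity then gives $-X=-a\circ b+u-u\circ u^-$, hence
\begin{align*}
\sigma_X(Y)&=a\circ b\circ b^-\circ a^- -a\circ b+a\circ b-u\circ u^- +u\\
&=a\circ b\circ b^-\circ a^- -u\circ u^- +u\\
&=u\circ u^- -u\circ u^- +u=u .
\end{align*}
The three steps use \cref{lemma_1}(1), then \eqref{A}--\eqref{B}, then \cref{lemma_ab}(2) applied to $(u\circ u^-)\circ u=u$. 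The second component is then $\sigma_X(Y)^-\circ X\circ Y=u^-\circ a\circ b=\rho_b(a)$, as you anticipated.

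For bijectivity, what you actually need is \cref{sigma_rho}, not a normal form. If $r_S^2=\id$, then $\rho_v(u)=b$ and $a=\sigma_u(v)=u\circ v\circ\rho_v(u)^-=a\circ b\circ b^-$. Taking $b=e\in\E(S,\circ)$ gives $a=a\circ e$ for every $a$. Since idempotents commute, this forces a unique idempotent, so $(S,\circ)$ is a group, and you conclude as you planned.
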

\begin{proof}
Initially, by \cref{teo_solu}, the map $r_S$ is a solution.  Now, let $a,b \in S$ and 
    let us analyze separately the two components of $$r_S^3(a,b)=\left(\sigma_{\sigma_{\sigma_a(b)}\rho_b(a)}\rho_{\rho_b(a)}\sigma_a(b), \, \rho_{\rho_{\rho_b(a)}\sigma_a(b)}\sigma_{\sigma_a(b)}\rho_b(a)\right).$$
    Focusing on the first one, we have
    \begin{align*}
    &\sigma_{\sigma_a(b)}\rho_b(a)\circ\left( \sigma_{\sigma_a(b)}\rho_b(a)^-+ \rho_{\rho_b(a)}\sigma_a(b)\right)\\
    &=\sigma_{\sigma_a(b)}\rho_b(a)\circ \sigma_{\sigma_a(b)}\rho_b(a)^--\sigma_{\sigma_a(b)}\rho_b(a) + a \circ b &\mbox{by \cref{condizione_a_2}(1)}\\
    & =\sigma_a(b)\circ\rho_b(a)\circ\rho_b(a)^-\circ\sigma_a(b)^- -\sigma_{\sigma_a(b)}\rho_b(a) + a \circ b &\mbox{by \eqref{A}-\eqref{B}}\\
    &=a \circ b \circ b^- \circ a^- -\sigma_{\sigma_a(b)}\rho_b(a) + a \circ b &\mbox{by \cref{condizione_a_2}(1)}\\
        &={a \circ b \circ b^- \circ a^- -\sigma_a(b) \circ \sigma_a(b)^- + \sigma_a(b) -\sigma_a(b) \circ \rho_b(a) + a \circ b}\\
        &={a \circ b \circ b^- \circ a^- -a \circ b + a \circ b -\sigma_a(b) \circ \sigma_a(b)^- + \sigma_a(b)} &\mbox{by \cref{condizione_a_2}(1)}\\
         &={a \circ b \circ b^- \circ a^- -\sigma_a(b) \circ \sigma_a(b)^- + \sigma_a(b)} &\mbox{{by \cref{lemma_1}(1)}}\\
         &={\sigma_a(b) \circ \sigma_a(b)^--\sigma_a(b) \circ \sigma_a(b)^-+ \sigma_a(b)} &\mbox{by \eqref{A}-\eqref{B}}\\
         &=\sigma_a(b) &\mbox{by \cref{lemma_ab}(2)}
    \end{align*}
Using the computation of the first component, the second component is given by:
        \begin{align*}
         &\rho_{\rho_{\rho_b(a)}\sigma_a(b)}\sigma_{\sigma_a(b)}\rho_b(a)\\
        &=\left(\sigma_{\sigma_{\sigma_a(b)}\rho_b(a)}\rho_{\rho_b(a)}\sigma_a(b)\right)^- \circ \sigma_{\sigma_a(b)}\rho_b(a) \circ \rho_{\rho_b(a)}\sigma_a(b)
        \\
        &=\sigma_a(b)^- \circ a \circ b &\mbox{by \cref{condizione_a_2}(1)}\\
    &=\rho_b(a).
    \end{align*} 
    Therefore, $r_S^3=r_S.$

    Now, suppose that $r_S$ is bijective. Since $r_S$ is cubic, it is involutive. Thus, this implies that $a=\sigma_{\sigma_a(b)}\rho_b(a)$ and $b=\rho_{\rho_b(a)}\sigma_a(b)$, for all $a, b \in S$.
    By \cref{condizione_a_2}(1), it follows that, for all $a,b \in S,$
    \begin{align*}
        b&=\rho_{\rho_b(a)}\sigma_a(b)=\left(\sigma_{\sigma_a(b)}\rho_b(a)\right)^-\circ \sigma_a(b)\circ \rho_b(a) 
        =a^-\circ a\circ b
    \end{align*}
and
    \begin{align*}
        a&=\sigma_a(b)\circ\rho_b(a)\circ \left(\rho_{\rho_b(a)}\sigma_a(b)\right)^-=a\circ b \circ b^-.
    \end{align*}
   Thus, for all $a,b \in S$, we get
    \[b\circ b^-=a^-\circ a\circ b \circ b^-=a^-\circ a,\]
     hence $(S,\circ)$ is a group.
    Therefore, by \cref{circ_gruppo}, $(S,+,\circ)$ is a brace.\\
    The converse follows by \cite{Ru07} and \cite[Lemma 2]{CeJeOk14} since the solution associated with any brace is involutive.
\end{proof}

\smallskip

\begin{rem}
    As a consequence of \cref{commutativo}, if $(S,+,\circ)$ is an inverse brace satisfying \eqref{A} and \eqref{B} and such that $(S,+)$ is
    a commutative semigroup, then the solution $r_S$ is quasi bijective with $r_S^-=r_S$.
\end{rem}

\medskip

\section{Constructions of inverse braces}
This section presents methods for constructing inverse braces from given ones, following the approach used for weak braces in \cite[Section 4]{CaMaMiSt22} and similar algebraic structures in \cite{CaMaSt21}. We show that these constructions preserve the existence of solutions; that is, if the original inverse brace admits a solution, then so does the constructed one.

\medskip

\subsection{Matched product of inverse braces}

Let us begin by introducing the following definition, which is consistent with \cite[Definition 17]{CaMaMiSt22}.
\begin{defin}\label{def_matched}
Let $S$ and $T$ be inverse braces, $\alpha:T\rightarrow \Aut(S)$ a homomorphism of inverse semigroups from $(T,\circ)$ into $\Aut(S,+)$, 
and $\beta:S\rightarrow \Aut(T)$ a homomorphism of inverse semigroups from $(S,\circ)$ into $\Aut(T,+)$,
such that for all $a,b \in S$ and $u,v \in T$ the following conditions hold:
\begin{align*}
    &\alpha_u\left(\alpha_u^{-1} (a)\circ b\right)= a\circ \alpha_{\beta_{a}^{-1}(u)}(b),\\
 &\beta_a\left(\beta_a^{-1}(u)\circ v\right)= u\circ \beta_{\alpha_{u}^{-1}(a)} (v),\\
&\alpha_u\left(\alpha_u^{-1} (a)\circ a\right)= a\ \Rightarrow \ \alpha_u(a)=a,\\
&  \beta_a\left(\beta_a^{-1}(u)\circ u\right)=u\ \Rightarrow \  \beta_a(u)=u.
\end{align*}
Then $(S, T,\alpha,\beta)$ is called \emph{matched product system of inverse braces}.
\end{defin}

\smallskip

\begin{rem}\label{rem:idemp-alpha-beta}
   Note that if $e\in \E(T, \circ)$, then $\alpha_{e} = \id_S$.
    In fact, if $u\in T$ is such that $e = u\circ u^{-}$, then
    \begin{align*}
        \alpha_{e}
        = \alpha_{u\circ u^{-}}
        = \alpha_{u}\alpha_{u^{-}}
        = \alpha_{u}\alpha^{-1}_{u}=\id_S.
    \end{align*}
Similarly, if $f\in \E(S, \circ )$, then $\beta_{f} = \id_T$.
   \end{rem}

   \smallskip

\begin{theor}\label{18}
Let $(S, T, \alpha,\beta)$ be a matched product system of inverse braces.  
Define the following operations
\begin{align*}
(a,u) + (b,v) &:= (a + b, u + v),\\
(a,u) \circ (b,v) &:= \left(\alpha_u\left(\alpha_u^{-1}(a)\circ b\right),\ \beta_a\left(\beta_a^{-1}(u)\circ v\right)\right),
\end{align*}
for all $(a,u),(b,v) \in S \times T$.
    Then $(S \times T, +, \circ)$ is an inverse brace, called the \emph{matched product of $S$ and $T$ via $\alpha$ and $\beta$}, and denoted by $S \bowtie T$. 
\end{theor}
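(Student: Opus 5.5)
The proof splits into three parts. First, $(S\times T,+)$ is the direct product of two inverse semigroups, hence inverse, with $-(a,u)=(-a,-u)$. Second, $(S\times T,\circ)$ is an inverse semigroup. Third, the brace identity holds; for this I would invoke \cref{prop_homo_lambda} rather than check the left distributive law directly. The second part is the main obstacle. The model is the argument for \cite[Theorem 18]{CaMaMiSt22}, and I would check that it never uses $\E(S,+)=\E(S,\circ)$.

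For the multiplicative structure, I first use the first two axioms of \cref{def_matched} to rewrite the product symmetrically:
\[(a,u)\circ(b,v)=\bigl(a\circ\alpha_{\beta_a^{-1}(u)}(b),\ u\circ\beta_{\alpha_u^{-1}(a)}(v)\bigr).\]
Associativity then follows as for the classical bicrossed (Zappa--Szép) product. One expands both bracketings and uses that $\alpha,\beta$ are homomorphisms from $(T,\circ)$, $(S,\circ)$, together with the compatibility relations, to move the twisted actions past each other.

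For regularity, the candidate inverse of $(a,u)$ is $\bigl(\alpha_u(a^-),\ \beta_a(u^-)\bigr)$, which should be checked to satisfy $xyx=x$ and $yxy=y$. To get an inverse semigroup, I would show that the idempotents commute. First I would characterize idempotents: $(a,u)\circ(a,u)=(a,u)$ gives $\alpha_u(\alpha_u^{-1}(a)\circ a)=a$ and $\beta_a(\beta_a^{-1}(u)\circ u)=u$. By the last two axioms these force $\alpha_u(a)=a$ and $\beta_a(u)=u$, so $a\circ a=a$ and $u\circ u=u$. Hence $\E(S\times T,\circ)=\E(S,\circ)\times\E(T,\circ)$. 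By \cref{rem:idemp-alpha-beta}, $\alpha_e=\id_S$ and $\beta_f=\id_T$ for idempotents, so the product restricted to idempotents is componentwise and they commute.

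For the brace identity, I would compute $\lambda_{(a,u)}(b,v)=-(a,u)+(a,u)\circ(b,v)$. In the $S$-component this is $-a+a\circ\alpha_{\beta_a^{-1}(u)}(b)=\lambda_a\alpha_{\beta_a^{-1}(u)}(b)$, and symmetrically in the $T$-component. Since $\lambda_a$ and $\alpha_{(\cdot)}$ are additive endomorphisms, each $\lambda_{(a,u)}$ is an endomorphism of $(S\times T,+)$. That $\lambda$ is a homomorphism from $(S\times T,\circ)$ reduces to the identity
\[\lambda_a\alpha_{\beta_a^{-1}(u)}\lambda_b\alpha_{\beta_b^{-1}(v)}=\lambda_{a\circ\alpha_{\beta_a^{-1}(u)}(b)}\,\alpha_{\beta^{-1}_{(\ldots)}(\ldots)},\]
which I would derive from the first axiom after rewriting it as $\lambda_{\alpha_w(c)}=\alpha_w\lambda_c\alpha_w^{-1}$, together with \cref{key2.4}. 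Then \cref{prop_homo_lambda} yields that $S\bowtie T$ is an inverse brace.
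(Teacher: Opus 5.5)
Your overall plan has the right shape, but two concrete steps fail as written. Both can be repaired.

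\textbf{The candidate inverse.} The candidate $\bigl(\alpha_u(a^-),\beta_a(u^-)\bigr)$ is not an inverse in general. In your symmetric form of the product, the first component of $(a,u)\circ\bigl(\alpha_u(a^-),\beta_a(u^-)\bigr)$ is $a\circ\alpha_{\beta_a^{-1}(u)}\alpha_u(a^-)$, and this need not equal $a\circ a^-$.

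For a counterexample, take the trivial braces $S=\mathbb{Z}_5$ and $T=\mathbb{Z}_4$ (with $\circ=+$), let $\alpha_u(a)=2^ua$, and let $\beta$ be trivial. All four conditions of \cref{def_matched} hold, and $(a,u)\circ(b,v)=(a+2^ub,\,u+v)$. For $x=(1,1)$ your candidate is $y=(3,3)$. Then $x\circ y=(2,0)$ and $x\circ y\circ x=(3,1)\neq x$.

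The right candidate, which is the one the paper uses, is $y=\bigl(\alpha^{-1}_{\beta^{-1}_a(u)}(a^-),\,\beta^{-1}_{\alpha^{-1}_u(a)}(u^-)\bigr)$.
\begin{itemize}
\item With this $y$, you get $x\circ y=(a\circ a^-,\,u\circ u^-)$.
\item Then $x\circ y\circ x=x$, because $\alpha$ and $\beta$ are trivial at idempotents (\cref{rem:idemp-alpha-beta}).
\item This already gives regularity, since $y\circ x\circ y$ is then an inverse of $x$.
\end{itemize}
Your description of $\E(S\times T,\circ)$ correctly uses the last two conditions. It then completes the inverse-semigroup part.

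\textbf{The rewriting of the first condition.} The identity $\lambda_{\alpha_w(c)}=\alpha_w\lambda_c\alpha_w^{-1}$ is not what the first condition says. Set $a=\alpha_w(c)$ and $u=w$, and use additivity of $\alpha_w$. This gives $\alpha_w\lambda_c=\lambda_{\alpha_w(c)}\,\alpha_{\beta^{-1}_{\alpha_w(c)}(w)}$. Your version drops the twist by $\beta$. That is harmless when $\beta$ is trivial, but not in general.

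With the correct rule, the homomorphism property does go through.
\begin{itemize}
\item The $S$-component of $\lambda_{(a,u)}\lambda_{(b,v)}$ becomes $\lambda_A\,\alpha_{\beta_A^{-1}(u)\circ\beta_b^{-1}(v)}$, where $A=a\circ\alpha_{\beta_a^{-1}(u)}(b)$.
\item You then need the second condition and $\beta_{b\circ b^-}=\id_T$ to rewrite the index as $\beta_A^{-1}(U)$, where $U=u\circ\beta_{\alpha_u^{-1}(a)}(v)$.
\item The same identity is what makes the associativity you only asserted work.
\end{itemize}

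The detour through \cref{prop_homo_lambda} is also unnecessary. The paper checks left distributivity directly. Since $\alpha_u$ is additive, $\alpha_u\bigl(\bar a\circ(b+c)\bigr)=\alpha_u(\bar a\circ b)-a+\alpha_u(\bar a\circ c)$ with $\bar a=\alpha_u^{-1}(a)$, and symmetrically in $T$. No compatibility condition is needed for that part.
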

\begin{proof}
Clearly, $(S \times T,+)$ is an inverse semigroup, as it is the direct product of the inverse semigroups $(S,+)$ and $(T,+)$. Moreover, proceeding as in the proof of \cite[Theorem 12]{CaMaSt21}, $(S \times T,\circ)$ is an inverse semigroup with $(a, u)^-=\left(\alpha^{-1}_{\beta^{-1}_a(u)}(a^-),\beta^{-1}_{\alpha^{-1}_u(a)}(u^-)\right)$, for all $(a, u) \in S \times T$. Further, if $(a,u),(b,v),(c,w) \in S \times T$, then
\begin{align*}
&(a,u) \circ \left((b,v)+(c,w)\right)\\
&=(a,u)\circ(b+c,\, v+w)\\
&=\left(\alpha_u\left(\alpha^{-1}_u(a)\circ (b+c)\right),\,
       \beta_a\left(\beta^{-1}_a(u)\circ (v+w)\right)\right)\\
&=\left(\alpha_u\left(\alpha^{-1}_u(a)\circ b-\alpha^{-1}_u(a)+\alpha^{-1}_u(a)\circ c\right), \ \beta_a\left(\beta^{-1}_a(u)\circ v-\beta^{-1}_a(u)+\beta^{-1}_a(u)\circ w\right)\right)\\
&=\left(\alpha_u\left(\alpha^{-1}_u(a)\circ b\right)-a+\alpha_u\left(\alpha^{-1}_u(a)\circ c\right),\, \beta_a\left(\beta^{-1}_a(u)\circ v\right)-u+\beta_a\left(\beta^{-1}_a(u)\circ w\right)\right)\\
&=\left(\alpha_u\left(\alpha^{-1}_u(a)\circ b\right),\,\beta_a\left(\beta^{-1}_a(u)\circ v\right)\right)
 \, -(a,u) \,
 +\left(\alpha_u\left(\alpha^{-1}_u(a)\circ c\right),\,\beta_a\left(\beta^{-1}_a(u)\circ w\right)\right)\\
&=(a,u) \circ (b,v) -(a,u) + (a,u) \circ (c,w).
\end{align*}
Therefore, the claim follows. 
\end{proof}

\smallskip



Now, we aim to show that the map $r_{S\bowtie T}$ associated to a matched product $S \bowtie T$ of two inverse braces satisfying both \eqref{A} and \eqref{B} is a solution. Moreover, we prove that  $r_{S\bowtie T}$ coincides with the matched product of the solutions associated with the inverse braces $S$ and $T$, respectively, namely, $r_{S\bowtie T}=r_S \bowtie r_T$.\\
For the ease of the reader, let us first recall the notion introduced in \cite{CCoSt20}. 

\begin{defin} Let $S$ and $T$ be two sets, $r_S(a, b)=(\lambda_a(b), \rho_b(a))$ a solution on $S$, and $r_T(u, v)=(\lambda_u(v), \rho_v(u))$ a solution on $T$. Let $\alpha: T \to \Sym\left(S\right)$ and $\beta: S \to \Sym\left(T\right)$ be two maps, and set $\alpha_u:=\alpha\left(u\right)$, for every $u \in T$, and $\beta_a:=\beta\left(a\right)$, for every $a\in S$. Then the quadruple $\left(r_S,r_T, \alpha,\beta\right)$ is said to be a \emph{matched product system of solutions} if the following conditions hold
	{
		\begin{center}
			\begin{minipage}[b]{.5\textwidth}
				\vspace{-\baselineskip}
				\begin{align}\label{eq:primo}\tag{s1}
					\alpha_u\alpha_v = \alpha_{\lambda_u\left(v\right)}\alpha_{\rho_{v}\left(u\right)}
				\end{align}
			\end{minipage}%
			\hfill\hfill\hfill
			\begin{minipage}[b]{.5\textwidth}
				\vspace{-\baselineskip}			\begin{align}\label{eq:secondo}\tag{s2}			\beta_a\beta_b=\beta_{\lambda_a\left(b\right)}\beta_{\rho_b\left(a\right)}
				\end{align}
			\end{minipage}
		\end{center}
		\begin{center}
			\begin{minipage}[b]{.5\textwidth}
				\vspace{-\baselineskip}
				\begin{align}\label{eq:quinto}\tag{s3}
					\rho_{\alpha^{-1}_u\!\left(b\right)}\alpha^{-1}_{\beta_a\left(u\right)}\left(a\right) = \alpha^{-1}_{\beta_{\rho_b\left(a\right)}\beta^{-1}_b\left(u\right)}\rho_b\left(a\right)
				\end{align}
			\end{minipage}%
			\hfill\hfill
			\begin{minipage}[b]{.5\textwidth}
				\vspace{-\baselineskip}
				\begin{align}\label{eq:sesto}\tag{s4}
					\rho_{\beta^{-1}_a\!\left(v\right)}\beta^{-1}_{\alpha_u\left(a\right)}\left(u\right) = \beta^{-1}_{\alpha_{\rho_v\left(u\right)}\alpha^{-1}_v\left(a\right)}\rho_v\left(u\right)
				\end{align}
			\end{minipage}
		\end{center}
		\begin{center}
			\begin{minipage}[b]{.5\textwidth}
				\vspace{-\baselineskip}
				\begin{align}\label{eq:terzo}\tag{s5}
					\lambda_a\alphaa{}{\beta^{-1}_{a}\left(u\right)}{}= \alphaa{}{u}{\lambdaa{\alphaa{-1}{u}{\left(a\right)}}{}}
				\end{align}
			\end{minipage}%
			\hfill\hfill
			\begin{minipage}[b]{.5\textwidth}
				\vspace{-\baselineskip}
				\begin{align}\label{eq:quarto}\tag{s6}
					\lambdaa{u}{\betaa{}{\alphaa{-1}{u}{\left(a\right)}}{}}=\betaa{}{a}{\lambdaa{\beta^{-1}_{a}\left(u\right)}{}}
				\end{align}
			\end{minipage}
		\end{center}
	}
	\noindent for all $a,b \in S$ and $u,v \in T$.
    \end{defin}

As shown in \cite[Theorem 2]{CCoSt20}, any matched product system of solutions determines a new solution on the set $S\times T$, defined by
		\begin{align}\label{matchsolu}
			& r_S\bowtie r_T\left(\left(a, u\right), 
			\left(b, v\right)\right) := 
			\left(\left(\alphaa{}{u}{\lambdaa{\bar{a}}{\left(b\right)}},\, \beta_a\lambdaa{\bar{u}}{\left(v\right)}\right),\ \left(\alphaa{-1}{\overline{U}}{\rhoo{\alphaa{}{\bar{u}}{\left(b\right)}}{\left(a\right)}},\,  \beta^{-1}_{\overline{A}}\rhoo{\beta_{\bar{a}}\left(v\right)}{\left(u\right)}\right) \right),
		\end{align}	
		\noindent where we set
		\begin{center}
		   $\bar{a}:=\alphaa{-1}{u}{\left(a\right)}$, \,\,$\bar{u}:= \beta^{-1}_{a}\left(u\right)$,\,\, $A:=\alphaa{}{u}{\lambdaa{\bar{a}}{\left(b\right)}}$,\, $U:=\beta_a\lambdaa{\bar{u}}{\left(v\right)}$,\,\, $\overline{A}:=\alphaa{-1}{U}{\left(A\right)}$,\,\, $\overline{U}:= \beta^{-1}_{A}\left(U\right)$,
		\end{center}
		for all $\left(a,u\right),\left(b,v\right)\in S\times T$. 
		This solution is called the \emph{matched product of the solutions} $r_S$ and $r_T$ (via $\alpha$ and $\beta$).

\smallskip

\begin{theor}\label{teo_match}
   Let $(S, T,\alpha,\beta)$ be a matched product system of inverse braces that satisfy both \eqref{A} and \eqref{B}. Also, assume that 
   \begin{align}\label{hp_match}
    \alpha_{\beta_a^{-1}(u)}\left(b \circ b^-\right)&=\alpha_{\beta_a^{-1}(u)}(b) \circ \alpha_{\beta_a^{-1}(u)}(b)^-\\
    \label{hp_match1} \beta_{\alpha_u^{-1}(a)}\left(v \circ v^-\right)&=\beta_{\alpha_u^{-1}(a)}(v) \circ \beta_{\alpha_u^{-1}(a)}(v)^-
   \end{align}
   hold, for all $(a, u), (b, v) \in S \times T$.
   Then the map $r_{S \ \bowtie \ T}: (S \bowtie T) \times (S \bowtie T) \to (S \bowtie T) \times (S \bowtie T)$ defined by
   \begin{align*}
 r_{S \ \bowtie \ T}\left(\left(a,u\right), \left(b, v\right) \right)=\left(\sigma_{(a,u)}(b, v), \sigma_{(a,u)}(b, v)^- \circ (a, u) \circ (b, v)\right),
   \end{align*}
   for all $(a, u), (b, v) \in S \times T$, is a solution.
   \begin{proof}
Initially, to prove that $r_{S \bowtie T}$ is a solution, it suffices, by \cref{teo_solu}, to show that \eqref{A} and \eqref{B} are also satisfied by the inverse brace $S \bowtie T$. \\
For $a,b \in S$ and for all $u,v \in T$, we have 
       \begin{align*}
       &\sigma_{(a,u)}\left(\left(b,v\right)\circ\left(b,v\right)^-\right)
       \\
      &= \left(a\circ a^--a+ a\circ \alpha_{\beta_a^{-1}(u)}\left(b \circ b^-\right), \, u\circ u^--u+u\circ \beta_{\alpha_u^{-1}(a)}\left(v \circ v^- \right)\right)\\
       &=\left(\sigma_a \left(\alpha_{\beta_a^{-1}(u)}\left(b \circ b^-\right)\right), \ \sigma_u \left(\beta_{\alpha_u^{-1}(a)}\left(v \circ v^- \right)\right)\right)\\
       &=\left(a\circ \alpha_{\beta_a^{-1}(u)}(b) \circ \alpha_{\beta_a^{-1}(u)}(b)^-\circ a^-, \, u\circ\beta_{\alpha_u^{-1}(a)}(v) \circ \circ\beta_{\alpha_u^{-1}(a)}(v)^- \circ u^-\right)&\mbox{by \eqref{hp_match}-\eqref{hp_match1}-\eqref{A}}\\
       &=(a,u)\circ(b,v) \circ \left((a,u)\circ(b,v)\right)^-.
              \end{align*}      
   Hence, \eqref{A} is proved.           
Moreover, for $a,b \in S$ and for $u,v \in T$,
\begin{align*}
   & \sigma_{(a,u)}(b, v) \circ \sigma_{(a,u)}(b, v)^-\\
   &=\left( \sigma_a\alpha_{\beta^{-1}_a(u)}(b) \circ \sigma_a\alpha_{\beta^{-1}_a(u)}(b)^-, \, \sigma_u\beta_{\alpha^{-1}_u(a)}(v) \circ \sigma_u\beta_{\alpha^{-1}_u(a)}(v)^-\right)    \\
  &=\left( \sigma_a\left(\alpha_{\beta^{-1}_a(u)}(b)  \circ \alpha_{\beta^{-1}_a(u)}(b) ^-\right) , \, \sigma_u\left(\beta_{\alpha^{-1}_u(a)}(v) \circ \beta_{\alpha^{-1}_u(a)}(v)^-\right)\right) &\mbox{by \eqref{B}}\\
    &=\left( \sigma_a\left(\alpha_{\beta^{-1}_a(u)}(b \circ b^-) \right) , \, \sigma_u\left(\beta_{\alpha^{-1}_u(a)}(v \circ v^-)\right)\right) &\mbox{by \eqref{hp_match}-\eqref{hp_match1}}
\end{align*}
and, thus, proceeding as above, we get 
\begin{align*}
   \sigma_{(a,u)}(b, v) \circ \sigma_{(a,u)}(b, v)^-=(a,u)\circ (b,v)\circ \left((a,u)\circ (b,v)\right)^-,
\end{align*}
namely, also \eqref{B} is satisfied. Therefore, $r_{S \ \bowtie \ T}$ is a solution.
   \end{proof}
\end{theor}

\smallskip

\begin{rem}
We observe that if $S$ and $T$ are weak braces, then the conditions \eqref{hp_match} and \eqref{hp_match1} in \cref{teo_match} are automatically satisfied. In fact, for example,
    \begin{align*}
        \alpha_{\beta_a^{-1}(u)}\left(b \circ b^-\right)&=\alpha_{\beta_a^{-1}(u)}\left(-b+b\right)=-\alpha_{\beta_a^{-1}(u)}\left(b\right)+\alpha_{\beta_a^{-1}(u)}\left(b\right)\\
        &=\alpha_{\beta_a^{-1}(u)}(b) \circ \alpha_{\beta_a^{-1}(u)}(b)^-,
    \end{align*}
    for all $a, b \in S$ and $u \in T$. The verification of condition \eqref{hp_match1} is analogous.
\end{rem}

\smallskip

\begin{theor}
   Let $(S, T,\alpha,\beta)$ be a matched product system of inverse braces that satisfy both \eqref{A} and \eqref{B}. Also, assume that \eqref{hp_match} and \eqref{hp_match1}
   hold.
   Then  $r_{S \ \bowtie \ T}=r_S \bowtie r_T$.
   \end{theor}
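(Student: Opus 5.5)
The plan is to compare the two maps componentwise. Here $r_S\bowtie r_T$ is the matched product \eqref{matchsolu} built from $r_S(a,b)=(\sigma_a(b),\rho_b(a))$ and $r_T(u,v)=(\sigma_u(v),\rho_v(u))$, so that $\lambda$ in \eqref{matchsolu} is read as $\sigma$. By \cref{teo_match} and \cref{teo_solu}, $r_{S\bowtie T}$, $r_S$ and $r_T$ are solutions. We may therefore take $r_S\bowtie r_T$ simply as the map given by formula \eqref{matchsolu}, and no separate check of (s1)--(s6) is needed for the equality itself.

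\textbf{First components.} Write $\bar a=\alpha_u^{-1}(a)$ and $\bar u=\beta_a^{-1}(u)$. By \cref{18},
\[
(a,u)^-=\bigl(\alpha^{-1}_{\bar u}(a^-),\,\beta^{-1}_{\bar a}(u^-)\bigr),
\qquad
(a,u)\circ(x,y)=\bigl(a\circ\alpha_{\bar u}(x),\,u\circ\beta_{\bar a}(y)\bigr),
\]
where the second identity uses the first two axioms of \cref{def_matched}. Hence
\[
\sigma_{(a,u)}(b,v)=(a,u)\circ\bigl((a,u)^-+(b,v)\bigr)
=\bigl(a\circ\alpha_{\bar u}(\alpha_{\bar u}^{-1}(a^-)+b),\ \dots\bigr)
=\bigl(a\circ(a^-+\alpha_{\bar u}(b)),\ \dots\bigr),
\]
since $\alpha_{\bar u}$ is additive. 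The $S$-component is thus $\sigma_a\alpha_{\bar u}(b)$, the form already used in the proof of \cref{teo_match}. To compare it with $A=\alpha_u\sigma_{\bar a}(b)$, I plan to apply the first axiom with $a$ replaced by $a^-$, which gives
\[
\alpha_u\bigl(\alpha_u^{-1}(a^-)\circ c\bigr)=a^-\circ\alpha_{\beta^{-1}_{a^-}(u)}(c),
\]
and then use \eqref{A} together with \cref{rem:idemp-alpha-beta} to convert the idempotent part $a\circ a^-$ correctly. A cleaner route is to prove the identity $\alpha_u\sigma_{\bar a}=\sigma_a\alpha_{\bar u}$, i.e.\ (s5) for $\sigma$. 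For this I would write $\sigma=\,$(idempotent)$\,+\lambda$ via \cref{rem_sigma_lambda}. The $\lambda$ part follows as for weak braces, since $\lambda_a\alpha_{\bar u}(b)=-a+a\circ\alpha_{\bar u}(b)=\alpha_u(-\bar a+\bar a\circ b)$ by the first axiom. The idempotent part needs $\alpha_u(\bar a\circ\bar a^-)=a\circ a^-$, which follows from the first axiom applied with $b=\bar a^-$ together with the fact that $\alpha_u$ is an automorphism of $(S,\circ)$ on the relevant elements. The $T$-component is handled symmetrically.

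\textbf{Second components.} The second component of $r_{S\bowtie T}$ equals $\sigma_{(a,u)}(b,v)^-\circ(a,u)\circ(b,v)$. I plan to compute it through \cref{condizione_a_2}(1) in $S\bowtie T$, which applies since $S\bowtie T$ satisfies \eqref{A} and \eqref{B} by the proof of \cref{teo_match}. Set $(A,U)=\sigma_{(a,u)}(b,v)$ and $(C,W)=\rho_{(b,v)}(a,u)$. Then $(A,U)\circ(C,W)=(a,u)\circ(b,v)$, and in the $S$-coordinate this reads $A\circ\alpha_{\overline U}(C)=a\circ\alpha_{\bar u}(b)$. Expanding the right-hand side with \cref{condizione_a_2}(1) in $S$ as $\alpha_u(\bar a\circ b)=\alpha_u\bigl(\sigma_{\bar a}(b)\circ\rho_b(\bar a)\bigr)$, and moving $\alpha_u$ across with the first axiom, I expect to obtain
\[
C=\alpha^{-1}_{\overline U}\,\rho_{\alpha_{\bar u}(b)}(a),
\]
which is exactly the formula in \eqref{matchsolu}. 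A cancellation is needed here, and I intend to justify it by left-multiplying by $A^-$ and using \eqref{rho_rhomeno} to control the idempotents $A^-\circ A$.

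\textbf{Main obstacle.} The main difficulty is handling the idempotents: $\alpha_u$ fixes neither $a\circ a^-$ nor $\bar a\circ\bar a^-$ a priori. This is where \eqref{hp_match}, \eqref{hp_match1} and the last two axioms of \cref{def_matched} must enter. If the direct cancellation fails, the fallback is to compare both sides through their inverses, using the formula $\rho_b(a)^-=b^-\circ a^-\circ\sigma_a(b)$, which holds from \cref{sigma_rho}.
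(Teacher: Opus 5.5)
\paragraph{Gap 1: the matched product system is never checked.} You decide to skip (s1)--(s6), and that leaves a real gap. In the paper, $r_S\bowtie r_T$ is defined as the matched product of solutions only when $(r_S,r_T,\alpha,\beta)$ is a matched product system of solutions. The theorem therefore includes the assertion that (s1)--(s6) hold. Knowing that $r_{S\bowtie T}$, $r_S$ and $r_T$ are solutions does not give this, and matching $r_{S\bowtie T}$ with formula \eqref{matchsolu} proves only an identity of maps. Your argument covers (s5)--(s6) but never (s1)--(s4).

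(s1) and (s2) are immediate: $\alpha_u\alpha_v=\alpha_{u\circ v}=\alpha_{\sigma_u(v)\circ\rho_v(u)}$ by \cref{condizione_a_2}(1) in $T$, and symmetrically in $S$. But (s3) and (s4) need a genuine computation. This is exactly where the paper uses \eqref{hp_match}, to turn $\alpha_u^{-1}(a^-)\circ\alpha_u^{-1}(a^-)^-$ into $\alpha_u^{-1}(a^-\circ a)$, together with \eqref{inv} and the first axiom of \cref{def_matched}.

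Relatedly, in your proof of (s5) you say $\alpha_u$ is an automorphism of $(S,\circ)$; it is only an automorphism of $(S,+)$. The fact $\alpha_u(\bar a\circ\bar a^-)=a\circ a^-$ does hold, but for a different reason. The first axiom with $b=\bar a^-$ gives $a\circ\alpha_{\bar u}(\bar a^-)$, and \eqref{inv} says $\alpha_{\bar u}(\bar a^-)=a^-$. Once you have \eqref{inv}, the paper's one-line computation $a\circ(a^-+\alpha_{\bar u}(b))=a\circ\alpha_{\bar u}(\bar a^-+b)=\alpha_u\sigma_{\bar a}(b)$ is simpler than splitting $\sigma$ into an idempotent plus $\lambda$.

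\paragraph{Gap 2: the cancellation in the second component.} Applying \cref{condizione_a_2}(1) in $S\bowtie T$ is legitimate, since $S\bowtie T$ satisfies \eqref{A} and \eqref{B} by the proof of \cref{teo_match}. It gives $A\circ\alpha_{\overline U}(C)=a\circ\alpha_{\bar u}(b)$. Multiplying on the left by $A^-$ gives $A^-\circ A\circ\alpha_{\overline U}(C)=A^-\circ a\circ\alpha_{\bar u}(b)=\rho_{\alpha_{\bar u}(b)}(a)$, directly from the definition of $\rho$, because $A=\sigma_a\alpha_{\bar u}(b)$. So the expansion through $\sigma_{\bar a}(b)\circ\rho_b(\bar a)$ is unnecessary.

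What you have not supplied is why the idempotent $A^-\circ A$ can be dropped, and \eqref{rho_rhomeno} will not do it. That identity describes the right idempotent $\rho_b(a)^-\circ\rho_b(a)$. You need left absorption after applying $\alpha_{\overline U}$, and $\alpha_{\overline U}$ is not multiplicative for $\circ$.

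The missing step is to use the form of $C$:
\begin{itemize}
\item Since $(A,U)^-=(\alpha^{-1}_{\overline U}(A^-),\beta^{-1}_{\overline A}(U^-))$, the product formula gives $C=\alpha^{-1}_{\overline U}(A^-)\circ w$ for some $w\in S$.
\item The first axiom of \cref{def_matched}, applied with $A^-$ and $\overline U$, then yields $\alpha_{\overline U}(C)=A^-\circ\alpha_{\beta^{-1}_{A^-}(\overline U)}(w)$.
\item This element is fixed by left multiplication by $A^-\circ A$, so the cancellation goes through.
\end{itemize}
With this step inserted, your route is correct and somewhat shorter than the paper's. The paper instead identifies the subscript explicitly through $\beta^{-1}_{\alpha^{-1}_{\overline U}(A^-)}\beta^{-1}_{\overline A}(U^-)=\beta^{-1}_{A^-}(\overline U)^-$, and it does not need \eqref{A} and \eqref{B} for $S\bowtie T$.
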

   \begin{proof}
Initially,  we show that $\left(r_S,r_T, \alpha,\beta\right)$ is a matched product system of solutions. It is enough to verify only identities \eqref{eq:primo}, \eqref{eq:quinto}, and \eqref{eq:terzo}, since the other ones can be obtained similarly, reversing the role of $\alpha$ and $\beta$, $a$ and $u$, and $b$ and $v$. For all $(a, u), (b, v) \in S   \times T$, by \cref{condizione_a_2}(1), we have
\begin{align*}
    \alpha_u\alpha_v = \alpha_{u \circ v}=\alpha_{\sigma_u\left(v\right) \circ \rho_{v}\left(u\right)}=\alpha_{\sigma_u\left(v\right)}\alpha_{\rho_{v}\left(u\right)},
\end{align*}
namely, \eqref{eq:primo} is satisfied. 
Moreover, for all $(a, u), (b, v) \in S \times T$,  we have
\begin{align*}
    &\sigma_{(a, u)}(b, v)=\left(\sigma_a\alpha_{\beta^{-1}_a(u)}(b), \, \sigma_u\beta_{\alpha^{-1}_u(a)}(v)\right).
\end{align*}
According to \cite[Lemma 11]{CaMaSt21}, observing that, for all $(a, u) \in S  \times T$,
      \begin{align}\label{inv}
 \alpha^{-1}_{u}\left(a\right)^{-}
    = \alpha^{-1}_{\beta^{-1}_{a}\left(u\right)}\left(a^{-}\right)\qquad \text{and} \qquad
    \beta^{-1}_{a}\left(u\right)^{-}
    = \beta^{-1}_{\alpha^{-1}_{u}\left(a\right)}\left(u^{-}\right),
\end{align}
we compute
\begin{align*}
    \sigma_a\alpha_{\beta^{-1}_a(u)}(b)&=a \circ \left(a^-+\alpha_{\beta^{-1}_a(u)}(b)\right)\\
    &=a \circ \left(\alpha_{\beta^{-1}_a(u)}\left(\alpha_u^{-1}(a)^-+b\right)\right) &\mbox{by \eqref{inv}}\\
    &=\alpha_u\left(\alpha_u^{-1}(a) \circ  \left(\alpha_u^{-1}(a)^-+b\right)\right)\\
    &=\alpha_u\sigma_{\alpha_u^{-1}(a)}(b)
\end{align*}
and, similarly, $\sigma_u\beta_{\alpha^{-1}_u(a)}(v)=\beta_a\sigma_{\beta_{a}^{-1}(u)}(v)$. Thus, \eqref{eq:terzo} is valid. 
Moreover,
\begin{align*}
    \rho_{\alpha^{-1}_u\!\left(b\right)}&\alpha^{-1}_{\beta_a\left(u\right)}\left(a\right) \\&=\rho_{\alpha^{-1}_u\!\left(b\right)}\left(\alpha_u^{-1}\left(a^-\right)^-\right)&\mbox{by \eqref{inv}}\\
    &=\left(\alpha_u^{-1}\left(a^-\right)+ \alpha^{-1}_u\!\left(b\right)\right)^- \circ \alpha_u^{-1}\left(a^-\right) \circ \alpha_u^{-1}\left(a^-\right)^- \circ \alpha^{-1}_u\!\left(b\right)\\
    &=\left(\alpha_u^{-1}\left(a^-+b\right)\right)^- \circ \alpha_u^{-1}\left(a^-\right) \circ \alpha_u^{-1}\left(a^-\right)^- \circ \alpha^{-1}_u\!\left(b\right)\\
    &=\left(\alpha_u^{-1}\left(a^-+b\right)\right)^- \circ \alpha_u^{-1}\left(a^- \circ a \right) \circ \alpha^{-1}_u\!\left(b\right) &\mbox{by \eqref{hp_match}}\\
    &=\left(\alpha^{-1}_u\!\left(b\right)^- \circ \alpha_u^{-1}\left(a^- \circ a \right)^- \circ  \alpha_u^{-1}\left(a^-+b\right)\right)^-\\
    &=\left(\alpha^{-1}_{\beta^{-1}_b\left(u\right)}\left(b^-\right) \circ \alpha_u^{-1}\left(a^- \circ a \right)^- \circ  \alpha_u^{-1}\left(a^-+b\right)\right)^-&\mbox{by \eqref{inv}}\\
     &=\left(\alpha^{-1}_{\beta^{-1}_b\left(u\right)}\left(b^- \circ \alpha_{\beta^{-1}_{b^-}\beta^{-1}_b(u)}\left(\alpha_u^{-1}\left(a^- \circ a \right)^- \circ  \alpha_u^{-1}\left(a^-+b\right)\right)\right)\right)^- &\mbox{by \cref{def_matched}}\\
     &=\left(\alpha^{-1}_{\beta^{-1}_b\left(u\right)}\left(b^- \circ \alpha_{u}\left(\alpha_u^{-1}\left(a^- \circ a \right)^- \circ  \alpha_u^{-1}\left(a^-+b\right)\right)\right)\right)^- \\
     &=\left(\alpha^{-1}_{\beta^{-1}_b\left(u\right)}\left(b^- \circ a  \circ a^- \circ \alpha_{\beta^{-1}_{a  \circ a^-}\left(u\right)}\alpha_u^{-1}\left(a^-+b\right)\right)\right)^- &\mbox{by \cref{def_matched}}\\
     &=\left(\alpha^{-1}_{\beta^{-1}_b\left(u\right)}\left(b^- \circ a  \circ a^- \circ \left(a^-+b\right)\right)\right)^-\\
&=\left(\alpha^{-1}_{\beta^{-1}_b\left(u\right)}\left(\rho_b(a)^-\right)\right)^-\\
    &=\left(\alpha^{-1}_{\beta_{\rho_b\left(a\right)}^{-1}\beta_{\rho_b\left(a\right)}\beta^{-1}_b\left(u\right)}\left(\rho_b(a)^-\right)\right)^- \\
    &=\alpha^{-1}_{\beta_{\rho_b\left(a\right)}\beta^{-1}_b\left(u\right)}\rho_b\left(a\right) &\mbox{by \eqref{inv}}
\end{align*}  
for all $(a, u), (b, v) \in S \times T$, thus also \eqref{eq:quinto} is satisfied.\\
Now, we prove that the maps $r_{S \ \bowtie \ T}$ and $r_S \bowtie r_T$ coincide. We observe that the first component of $r_{S \bowtie T}$ is equal to that of $r_S \bowtie r_T$ in \eqref{matchsolu} by \eqref{eq:terzo}-\eqref{eq:quarto}.\\
To complete the proof, it remains to show that
\begin{align*}
    \rho_{(b, v)}(a, u)=\left(\alphaa{-1}{\overline{U}}{\rhoo{\alphaa{}{\bar{u}}{\left(b\right)}}{\left(a\right)}},\,  \beta^{-1}_{\overline{A}}\rhoo{\beta_{\bar{a}}\left(v\right)}{\left(u\right)}\right),
\end{align*}
for all $(a, u), (b, v) \in S \times T$, where
\begin{center}
		   $\bar{a}:=\alphaa{-1}{u}{\left(a\right)}$, \,\,$\bar{u}:= \beta^{-1}_{a}\left(u\right)$,\,\, $A:=\alphaa{}{u}{\sigmaa{\bar{a}}{\left(b\right)}}$,\, $U:=\beta_a\sigmaa{\bar{u}}{\left(v\right)}$,\,\, $\overline{A}:=\alphaa{-1}{U}{\left(A\right)}$,\,\, $\overline{U}:= \beta^{-1}_{A}\left(U\right)$.
		\end{center}
Let $(a, u), (b, v) \in S \times T$. We compute
\begin{align*}
    \rho_{(b, v)}(a, u)&=\left(\alpha_u\sigma_{\bar{a}}(b), \, \beta_a\sigma_{\bar{u}}(v)\right)^- \circ  (a, u) \circ (b, v)\\
   &=\left(\alpha^{-1}_{\overline{U}}\left(\alpha_u\sigma_{\bar{a}}(b)^-\right), \, \beta^{-1}_{\overline{A}}\left(\beta_a\sigma_{\bar{u}}(v)^-\right)\right)\circ  \left(a \circ \alpha_{\bar{u}}(b),\ u \circ \beta_{\bar{a}}(v)\right).
\end{align*}
The first component of the latter product is equal to 
\begin{align*}
   \alpha^{-1}_{\overline{U}}\left(\alpha_u\sigma_{\bar{a}}(b)^-\right) \circ\, &\,\alpha_{\beta^{-1}_{\alpha^{-1}_{\overline{U}}\left(\alpha_u\sigma_{\bar{a}}(b)^-\right)}\left(\beta^{-1}_{\overline{A}}\left(\beta_a\sigma_{\bar{u}}(v)^-\right)\right)}\left(a \circ \alpha_{\bar{u}}(b)\right)\\
   &=\alpha^{-1}_{\overline{U}}(A^-)\circ \alpha_{\beta^{-1}_{\alpha^{-1}_{\overline{U}}\left(A^-\right)}\left(\beta^{-1}_{\overline{A}}\left(U^-\right)\right)}\left(a \circ \alpha_{\bar{u}}(b)\right).
\end{align*}
It suffices to compare the first components, as the second ones coincide by symmetry after interchanging the roles of $\alpha$ and $\beta$, $a$ and $u$, and $b$ and $v$. We have that
\[
\beta^{-1}_{
\alpha_{\overline U}^{-1}(A^{-})}
\beta_{\overline A}^{-1}(U^{-})
=
\beta_{A^{-}}^{-1}\left(\overline U\right)^{-}.
\]
Indeed, by repeatedly applying \eqref{inv}, one has
\begin{align*}
    \beta^{-1}_{A^-}(\overline{U})^-&=\beta_{A^-}^{-1}\left(\beta^{-1}_A(U)\right)^-=\beta^{-1}_{
\alpha_{\beta_A^{-1}(U)}^{-1}(A^-)}
\left(
\beta_A^{-1}(U)^{-}
\right) \\
&=
\beta^{-1}
_{\alpha_{\beta_A^{-1}(U)}^{-1}(A^{-})}
\left(
\beta_{\alpha_U^{-1}(A)}^{-1}(U^{-})
\right)=\beta^{-1}_{
\alpha_{\overline U}^{-1}(A^{-})}
\beta_{\overline A}^{-1}(U^{-}).
\end{align*}
Hence, the first component of $\rho_{(b, v)}(a, u)$ is equal to
\begin{align*}
   \alpha^{-1}_{\overline{U}}(A^-)\circ \alpha_{\beta^{-1}_{A^-}(\overline{U})^-}\left(a \circ \alpha_{\bar{u}}(b)\right).
\end{align*}
Finally, \begin{align*}
\alpha_{\bar U}^{-1}\rho_{\alpha_{\bar u}(b)}(a)
&=
\alpha_{\overline U}^{-1}\!\left(
A^{-}
\circ a
\circ \alpha_{\bar u}(b)
\right)                                                   \\
&=
\alpha_{\overline U}^{-1}\!\left(
A^{-}\circ
\alpha_{\beta_{A^{-}}^{-1}(\overline U)}\,
\alpha_{\beta_{A^{-}}^{-1}(\overline U)}^{-1}
\left(
a\circ\alpha_{\bar u}(b)
\right)
\right)                                                   \\
&=
\alpha_{\overline U}^{-1}(A^{-})
\circ
\alpha_{\beta_{A^{-}}^{-1}\left(\overline U\right)}^{-1}
\left(
a\circ\alpha_{\bar u}(b)
\right) &\mbox{by \cref{def_matched}}\\
&=\alpha^{-1}_{\overline{U}}(A^-)\circ \alpha_{\beta^{-1}_{A^-}(\overline{U})^-}\left(a \circ \alpha_{\bar{u}}(b)\right),
\end{align*}
thus the two components are equal. Therefore, the claim is proved.
   \end{proof}

\bigskip
Next, we focus on a particular case of the previous construction, namely the \emph{semidirect product} of two inverse braces, following a similar approach as in \cite[p. 597]{CaMaSt21}. \\Given a matched product system $\left(S, T, \alpha, \beta\right)$ of inverse braces, we consider $\beta_a = \id_T$, for all $a\in S$. In this way, the inverse semigroup $S\times T$ defined as in \cref{18} is exactly the semidirect product of the inverse semigroups $S$ and $T$ via $\alpha$, cf. \cite{Ni83} and \cite{Pr86}. 
Note that this is a particular case of the \emph{Zappa product} of two semigroups \cite{Ku83}. Let us recall this construction. In general, if $S$ and $T$ are semigroups, $\gamma:T\to S^{S}$ and $\delta:S\to T^{T}$ maps, set $^u a:= \gamma\left(u\right)\left(a\right)$ and $u^a:= \delta\left(a\right)\left(u\right)$, for all $a\in S$ and $u\in T$, if the following conditions are satisfied
	\begin{align}\label{S1}
	&^{u}(ab)=\,^ua \, ^{u^a}b	
	&^{uv}a = \, ^u(^va) \tag{Z1}\\
	\label{S2}&( uv)^a = u^{^va} \,\, v^a&u^{ab}=(u^a)^b\tag{Z2}
				\end{align}
for all $a,b \in S$ and $u,v \in T$, then $S \times T$ is a semigroup with respect to the operation defined by
	\begin{equation*}\label{prod-semigruppo-match}	\left(a,u\right)\left(b,v\right)=\left(a\,^ub, u^b\,v\right),
	\end{equation*}
	for all $a,b \in S$ and $u,v \in T$.
Thus, the inverse semigroup $(S\times T, \circ)$ defined as in \cref{18} is the Zappa product of $(S, \circ)$ and $(T, \circ)$ with $^{u}a = \gamma\left(u\right)\left(a\right) = \alpha_u\left(a\right)$  and $u^a = \beta_a\left(u\right) = u$, for all $a\in S$ and $u\in T$ (see \cite[Remark 5]{CaMaSt21}). 

\smallskip

We now state a characterization of when the semidirect product of two semigroups is an inverse semigroup.
\begin{theor}\label{th:char-semiprod} \emph{(\cite[cf. Theorem 6]{Pr86})}
	Let $S$ and $T$ be two semigroups and $\gamma:T\to \End(S)$ a homomorphism. Then, the semidirect product of $S$ and $T$ via $\sigma$ is an inverse semigroup if and only if:
	\begin{enumerate}
		\item[\rm{(1)}]$S$ and $T$ are inverse semigroups;
	\item[\rm{(2)}]$\gamma(T)\subseteq \Aut(S)$.
	\end{enumerate}
\end{theor}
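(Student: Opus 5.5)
We must prove: for semigroups $S,T$ and a homomorphism $\gamma:T\to\End(S)$, the semidirect product $S\rtimes_\gamma T$, with $(a,u)(b,v)=(a\,{}^ub,\,uv)$, is inverse if and only if $S$ and $T$ are inverse and $\gamma(T)\subseteq\Aut(S)$. The plan is to characterize inverse semigroups as regular semigroups with commuting idempotents, and to test both properties directly on $S\times T$.

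\emph{Sufficiency.} Assume (1) and (2) and write $\gamma_u=\gamma(u)$.
\begin{enumerate}
\item[\rm{(i)}] Since $\gamma$ is a homomorphism into $\Aut(S)$, $\gamma_{u^{-1}}$ is an inverse of $\gamma_u$ in the inverse semigroup $\Aut(S)$. As $\Aut(S)$ is a group, this gives $\gamma_{u^{-1}}=\gamma_u^{-1}$.
\item[\rm{(ii)}] Propose the candidate inverse $(a,u)^{-1}=\left(\gamma_{u^{-1}}(a^{-1}),\,u^{-1}\right)$. Verifying $xyx=x$ and $yxy=y$ is a direct computation. For example, the first component of $(a,u)(a,u)^{-1}(a,u)$ is
\[
a\,{}^{u}\!\left(\gamma_{u^{-1}}(a^{-1})\right)\,{}^{uu^{-1}}\!a=a\,a^{-1}\,{}^{uu^{-1}}\!a .
\]
\item[\rm{(iii)}] To conclude ${}^{uu^{-1}}a=a$, we need $\gamma_e=\id_S$ for every idempotent $e\in\E(T)$. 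This holds because $\gamma_e$ is an idempotent in the group $\Aut(S)$.
\item[\rm{(iv)}] The idempotents of $S\times T$ are the pairs $(a,u)$ with $u\in\E(T)$ and $a\,{}^ua=a\,a=a$. Hence $\E(S\times T)=\E(S)\times\E(T)$. Since ${}^{e}f=f$, the product is componentwise on these pairs, so idempotents commute.
\end{enumerate}

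\emph{Necessity.} Assume $S\rtimes_\gamma T$ is inverse.
\begin{enumerate}
\item[\rm{(i)}] The projection $(a,u)\mapsto u$ is a surjective homomorphism. Homomorphic images of inverse semigroups are inverse, so $T$ is inverse.
\item[\rm{(ii)}] For $S$, take an inverse $(b,v)$ of $(a,u)$ and fix $u=e\in\E(T)$. The relations $a\,{}^eb\,{}^{ev}a=a$ and their companions show that $a$ is regular in $S$. Commutation of idempotents in $S$ should come from comparing the idempotents $(f,e)$ and $(g,e)$. This requires first showing $\gamma_e$ fixes idempotents, or relating $f\,{}^eg$ with $g\,{}^ef$.
\item[\rm{(iii)}] For (2), use that $(a,u)(a,u)^{-1}$ is idempotent with $T$-component $uu^{-1}$. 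Then exploit uniqueness of inverses to force $\gamma_e$ to act as the identity, and hence $\gamma_u\gamma_{u^{-1}}=\id$, so each $\gamma_u$ is bijective.
\end{enumerate}

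The main obstacle is step (iii) of necessity, extracting $\gamma_e=\id_S$ purely from inverseness. My first attempt: compare the two idempotents $(a\,a^{-1},e)$ and $(a,e)(a,e)^{-1}$, and show that $(\gamma_e(a),e)$ and $(a,e)$ have the same inverse. Uniqueness of inverses would then force $\gamma_e(a)=a$. If this does not work in general, the proof may need the convention of \cite{Pr86} that $S$ is a monoid or that $\gamma$ acts by automorphisms already.
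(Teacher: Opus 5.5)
Your sufficiency direction is correct. The necessity direction has a genuine gap, exactly where you suspected: you never prove $\gamma_e=\id_S$ for $e\in\E(T)$, and everything else in that direction depends on it.

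\begin{itemize}
\item Step (ii) does not work as written. The relation $a\,\gamma_e(b)\,\gamma_e(a)=a$ does not by itself show that $a$ is regular in $S$. Applying $\gamma_e$ to it only shows that $\gamma_e(a)$ is regular.
\item For the same reason, commutation of idempotents of $S$ cannot be read off from $(f,e)$ and $(g,e)$ while $\gamma_e$ is unknown. You do not even know yet that $(f,e)$ is idempotent.
\item Your sketched attempt for (iii) is circular. It uses $aa^{-1}$, an inverse taken in $S$, before $S$ is known to be inverse.
\item Your fallback of assuming that $\gamma$ acts by automorphisms would turn condition (2) into a hypothesis rather than a conclusion. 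No extra convention is needed: the statement is true as given.
\end{itemize}

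The missing step can be supplied along the lines of your own idea that $(a,e)$ and $(\gamma_e(a),e)$ share an inverse, without any reference to $a^{-1}$.
\begin{itemize}
\item Let $(b,v)$ be the inverse of $(a,e)$ in $S\rtimes_\gamma T$. Projecting shows that $v$ is an inverse of $e$ in the inverse semigroup $T$, so $v=e$.
\item The defining relations then read $a\,\gamma_e(b)\,\gamma_e(a)=a$ and $b\,\gamma_e(a)\,\gamma_e(b)=b$.
\item Since $\gamma_e^2=\gamma_{e^2}=\gamma_e$, applying $\gamma_e$ to the first relation gives $\gamma_e(a)\,\gamma_e(b)\,\gamma_e(\gamma_e(a))=\gamma_e(a)$. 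The second relation can be rewritten as $b\,\gamma_e(\gamma_e(a))\,\gamma_e(b)=b$.
\item Hence $(b,e)$ is also an inverse of $(\gamma_e(a),e)$.
\item Both $(a,e)$ and $(\gamma_e(a),e)$ are therefore inverses of $(b,e)$. Uniqueness of inverses gives $(\gamma_e(a),e)=(a,e)$, so $\gamma_e=\id_S$.
\end{itemize}

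The rest then follows.
\begin{itemize}
\item The map $a\mapsto(a,e)$ is an isomorphism of $S$ onto $S\times\{e\}$.
\item $S\times\{e\}$ is an inverse subsemigroup, since it is closed under inversion by the same projection argument. Hence $S$ is inverse.
\item Finally, $\gamma_u\gamma_{u^{-1}}=\gamma_{uu^{-1}}=\id_S=\gamma_{u^{-1}u}=\gamma_{u^{-1}}\gamma_u$, so each $\gamma_u$ is bijective and lies in $\Aut(S)$.
\end{itemize}

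The paper itself gives no proof of this statement; it only cites Preston's Theorem 6. So there is no route of the paper to compare with; the only issue is this missing step.
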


\smallskip

We can now specialize \cref{teo_match} as follows.\begin{cor}\label{cor:semidirect}
	Let $S$ and $T$ be inverse braces and $\gamma:T\to \Aut(S)$ a homomorphism from $(T, \circ)$ into the automorphism group of the inverse brace $S$. Then, $S \times T$ equipped with the following operations
	\begin{align*}
	(a,u)+(b,v)&:=(a + b,\,u + v )\\
	(a,u)\circ(b,v)&:= (a\circ \gamma_u(b),\, u \circ v ),
	\end{align*}
	for all $(a,u), (b,v)\in S\times T$, is an inverse brace, called \emph{semidirect product of $S$ and $T$ via $\gamma$}, and denoted by $S\rtimes_{\gamma}T$.
\end{cor}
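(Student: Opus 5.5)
The plan is to realize $S\rtimes_\gamma T$ as the matched product $S\bowtie T$ of \cref{18}, taking $\alpha:=\gamma$ and $\beta_a:=\id_T$ for all $a\in S$. Once $(S,T,\gamma,\beta)$ is shown to be a matched product system of inverse braces in the sense of \cref{def_matched}, the conclusion follows at once from \cref{18}.

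First I check that the operations agree. With $\beta_a=\id_T$, the matched product operation becomes
$$(a,u)\circ(b,v)=\bigl(\gamma_u(\gamma_u^{-1}(a)\circ b),\,u\circ v\bigr).$$
Since $\gamma_u$ is an automorphism of the inverse brace $S$, it preserves $\circ$. Hence $\gamma_u(\gamma_u^{-1}(a)\circ b)=a\circ\gamma_u(b)$, which is exactly the operation in the statement. Here I read "automorphism group of the inverse brace $S$" as consisting of bijections of $S$ preserving both $+$ and $\circ$.

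Next I verify the axioms of \cref{def_matched}.

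\textbf{Homomorphisms.} The map $\gamma$ lands in $\Aut(S,+)$ and is a homomorphism from $(T,\circ)$. The map $\beta$ is constant at $\id_T$, which is trivially a homomorphism into $\Aut(T,+)$.

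\textbf{First identity.} It reads $\gamma_u(\gamma_u^{-1}(a)\circ b)=a\circ\gamma_{\beta_a^{-1}(u)}(b)=a\circ\gamma_u(b)$. This is the multiplicativity of $\gamma_u$ used above.

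\textbf{Second identity.} It reads $\beta_a(\beta_a^{-1}(u)\circ v)=u\circ\beta_{\gamma_u^{-1}(a)}(v)$, i.e. $u\circ v=u\circ v$, which is trivial.

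\textbf{Fourth condition.} Its conclusion $\beta_a(u)=u$ always holds, so it is trivial.

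\textbf{Third condition (the only point needing care).} Suppose $\gamma_u(\gamma_u^{-1}(a)\circ a)=a$. By multiplicativity this says $a\circ\gamma_u(a)=a$, and I must deduce $\gamma_u(a)=a$.

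This does not look automatic for an arbitrary automorphism, so I would argue through the inverse-semigroup structure. Multiplying $a\circ\gamma_u(a)=a$ on the left by $a^-$ gives $a^-\circ a\circ\gamma_u(a)=a^-\circ a$. I would then try to exploit that $\gamma_u$ is invertible with inverse $\gamma_{u^-}$ only up to the idempotent $u\circ u^-$, noting that $\gamma_{u\circ u^-}=\id_S$ by the argument of \cref{rem:idemp-alpha-beta}.

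If this fails in full generality, there is a fallback. The third and fourth conditions of \cref{def_matched} are used only to guarantee that $(S\times T,\circ)$ is an inverse semigroup, via \cite[Theorem 12]{CaMaSt21}. In the semidirect case this is supplied directly by \cref{th:char-semiprod}: $S$ and $T$ are inverse and $\gamma(T)\subseteq\Aut(S)$, so $(S\times T,\circ)$ is inverse.

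With that in hand, the distributive law is checked componentwise exactly as in the proof of \cref{18}. The first component uses that $S$ is an inverse brace and that $\gamma_u$ is additive:
$$a\circ\gamma_u(b+c)=a\circ\gamma_u(b)-a+a\circ\gamma_u(c).$$
The second component is the brace identity in $T$. This route avoids the delicate condition entirely, and it is the one I would actually write down, with the matched-product identification given as a remark.
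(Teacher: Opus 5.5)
Your final argument is correct and is essentially the paper's route: you use Preston's characterization (\cref{th:char-semiprod}) to get that $(S\times T,\circ)$ is inverse, check the brace identity componentwise as in \cref{18}, and present $S\rtimes_\gamma T$ as the $\beta_a=\id_T$ case of the matched product, which is how the paper introduces it before citing Preston's theorem. You were right to be cautious about the third condition of \cref{def_matched}, since it can fail here (e.g.\ $S=(\mathbb{Z},\min,\min)$, $T$ the trivial brace on $(\mathbb{Z},+)$, $\gamma_u(x)=x+u$, $u=1$, $a=0$), so relying on Preston's theorem instead of applying \cref{18} verbatim is the right call.
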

\smallskip

\begin{ex}
Let $S=\{1,x,y\}$ be a set and define the operation $\cdot$ as
$$\begin{tabular}{c | c c c c }
    $\cdot$ & $1$ & $x$ & $y$  \\
    \hline
   $1$& $1$ & $1$ & $1$ \\
    $x$ & $1$ & $x$ & $1$ \\
    $y$& $1$ & $1$ & $y$ 
\end{tabular}$$
that is, $(S, \cdot)$ is the upper semilattice with join $1$. Then $\Aut(S)=\{\id_S, \tau\}$, where $\tau := (x\, y)$. Let $(S, \cdot, \cdot)$ be the trivial weak brace on $S$. \\
Moreover, let $T=\{a,b,c,d\}$ and consider the inverse brace $(T, +, \circ)$ in \cref{ex-4ele} given by
    $$\begin{tabular}{c | c c c c c  c}
    $+$ & $a$ & $b$ & $c$  &$d$ \\
    \hline
   $a$& $a$ & $a$ & $a$ &$d$\\
    $b$ & $a$ & $b$ & $c$ & $d$\\
    $c$& $a$ & $c$ & $b$ & $d$\\
    $d$ & $d$& $d$& $d$& $a$
\end{tabular}  \qquad \qquad{\rm and}\qquad \qquad \begin{tabular}{c | c c c c c  c}
    $\circ$ & $a$ & $b$ & $c$ &$d$  \\
    \hline
   $a$& $a$ & $a$ & $a$ & $d$\\
    $b$ & $a$ & $b$ & $c$ & $d$\\
    $c$& $a$ & $c$ & $c$ & $d$\\
    $d$& $d$ & $d$ & $d$ & $a$
\end{tabular}$$
Then it is a routine computation to check that the map $\gamma:T\to \Aut(S)$ defined by $$\gamma(a) = \gamma(b) = \gamma(c)= \id_S \quad \text{and} \quad \gamma(d) = \tau$$
is a homomorphism from $(T, \circ)$ into $\Aut(S)$. 
Hence, by \cref{cor:semidirect}, 
$S\rtimes_{\gamma}T$ is a new inverse brace. 
\end{ex}

\medskip

\subsection{Strong semilattice of inverse braces}

\smallskip

We present a method for constructing inverse braces using the technique of strong semilattices of inverse braces. This technique has also been successfully applied to other brace-like structures and has proven particularly effective for dual weak braces, which can be shown to be strong semilattices of skew braces \cite[Theorem 1]{CaMaSt24}. The description of Clifford semigroups inspires this technique as they are \emph{strong semilattices of groups} (see \cite[Theorem II.2.6]{Pe84}). To make this paper self-contained, let us initially recall this description.

Let $Y$ be a (lower) semilattice and $\{G_{\alpha}\ | \ \alpha\in Y\}$ a family of disjoint groups. For all $\alpha,\beta\in Y$ such that $\alpha \geq \beta$, let $\phii{\alpha}{\beta}:G_{\alpha}\to G_{\beta}$ be a homomorphism of groups such that
 \begin{enumerate}
     \item $\phii{\alpha}{\alpha}$ is the identical automorphism of $G_{\alpha}$, for every $\alpha \in Y$;
     \item $\phii{\beta}{\gamma}{}\phii{\alpha}{\beta}{} = \phii{\alpha}{\gamma}{}$, for all $\alpha, \beta, \gamma \in Y$ such that $\alpha \geq \beta \geq \gamma$.
 \end{enumerate} 
Then, $S = \displaystyle \bigcup_{\alpha\in Y}\,G_{\alpha}$ endowed with the operation defined by
$$
    ab:= \phii{\alpha}{\alpha\beta}(a)\ \phii{\beta}{\alpha\beta}(b),
$$
for all $a\in G_{\alpha}$ and $b\in G_{\beta}$, is a \emph{Clifford semigroup}, also called \emph{strong semilattice $Y$ of groups $G_{\alpha}$}, usually written as $S=[Y; G_{\alpha}; \phii{\alpha}{\beta}]$. Conversely, any Clifford semigroup is of this form.

\smallskip

We can give the following result.
\begin{theor}\label{teo_semila}
Let $Y$ be a (lower) semilattice, $\left\{S_{\alpha}\ \left|\ \alpha \in Y\right.\right\}$ a family of disjoint inverse braces. For each pair $\alpha,\beta$ of elements of $Y$ such that $\alpha \geq \beta$, let $\phii{\alpha}{\beta}:S_{\alpha}\to S_{\beta}$ be an inverse brace homomorphism such that
\begin{enumerate}
    \item $\phii{\alpha}{\alpha}=\id_{S_{\alpha}}$, for every $\alpha \in Y$,
    \item $\phii{\beta}{\gamma}{}\phii{\alpha}{\beta}{} = \phii{\alpha}{\gamma}{}$, for all $\alpha, \beta, \gamma \in Y$ such that $\alpha \geq \beta \geq \gamma$.
\end{enumerate}
Then, $S = \bigcup\left\{S_{\alpha}\ \left|\ \alpha\in Y\right.\right\}$ endowed with the addition and the multiplication, respectively, defined by
\begin{align*}
    a+b:= \phii{\alpha}{\alpha\beta}(a)+\phii{\beta}{\alpha\beta}(b)
    \quad\text{and}\quad
     a\circ b:= \phii{\alpha}{\alpha\beta}(a)\circ\phii{\beta}{\alpha\beta}(b),
\end{align*}
for all $a\in S_{\alpha}$ and $b\in S_{\beta}$, is an inverse brace, called \emph{strong semilattice of inverse braces} and
denoted by $S=[Y; S_\alpha;\phii{\alpha}{\beta}]$. \end{theor}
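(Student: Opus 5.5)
The plan is to reduce everything to the classical fact that a strong semilattice of inverse semigroups is again an inverse semigroup, and then to check the brace identity by pushing all elements into a single component. First, $(S,+)$ is the strong semilattice $[Y;(S_\alpha,+);\phii{\alpha}{\beta}]$ and $(S,\circ)$ is $[Y;(S_\alpha,\circ);\phii{\alpha}{\beta}]$. Here each $\phii{\alpha}{\beta}$ is a homomorphism for both operations, since it is an inverse brace homomorphism.

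For the semigroup part, I will verify associativity in the standard way. Take $a\in S_\alpha$, $b\in S_\beta$, $c\in S_\gamma$. Using the transitivity condition (2), both $(a+b)+c$ and $a+(b+c)$ equal
\[
\phii{\alpha}{\alpha\beta\gamma}(a)+\phii{\beta}{\alpha\beta\gamma}(b)+\phii{\gamma}{\alpha\beta\gamma}(c),
\]
computed inside $S_{\alpha\beta\gamma}$. The same argument works for $\circ$.

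Next I will show that $(S,+)$ is inverse. For $a\in S_\alpha$, its inverse $-a$ taken in $S_\alpha$ is an inverse in $S$, because all the relevant products stay in $S_\alpha$ and condition (1) gives $\phii{\alpha}{\alpha}=\id$. For uniqueness, I plan to show that idempotents commute. Any idempotent $e\in S_\alpha$ is an idempotent of $S_\alpha$. For $e\in S_\alpha$ and $f\in S_\beta$ we have
\[
e+f=\phii{\alpha}{\alpha\beta}(e)+\phii{\beta}{\alpha\beta}(f),
\]
which is a sum of two idempotents of the inverse semigroup $S_{\alpha\beta}$, and those commute. So $S$ is regular with commuting idempotents, hence inverse. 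The identical argument applies to $(S,\circ)$, with $a^-$ computed in $S_\alpha$. As a consequence, inverses in $S$ are the componentwise inverses, and $\phii{\alpha}{\beta}(-a)=-\phii{\alpha}{\beta}(a)$, since homomorphisms of inverse semigroups preserve inverses.

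The distributive law is where some care is needed, though I expect it to be short. Let $a\in S_\alpha$, $b\in S_\beta$, $c\in S_\gamma$, and set $\delta=\alpha\beta\gamma$. Write $\bar x$ for the image of $x$ in $S_\delta$. Then $a\circ(b+c)$ lies in $S_\delta$, and by condition (2) it equals $\bar a\circ(\bar b+\bar c)$. Applying the inverse brace identity in $S_\delta$ gives
\[
\bar a\circ\bar b-\bar a+\bar a\circ\bar c .
\]
On the other side, $a\circ b\in S_{\alpha\beta}$, $-a\in S_\alpha$ and $a\circ c\in S_{\alpha\gamma}$, and the meet of these indices is again $\delta$. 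Mapping each term to $S_\delta$ via the transitive $\phii{}{}$'s and using that $\phii{\alpha}{\delta}(-a)=-\bar a$ yields the same element.

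The main obstacle is mild bookkeeping: I have to make sure that the iterated sum $a\circ b-a+a\circ c$ in $S$ really is computed by projecting everything to the meet index. This follows from the associativity computation above.
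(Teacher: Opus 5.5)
Your proposal is correct and follows the paper's proof. As in the paper, you verify the distributive law by projecting $a$, $b$, $c$ into $S_{\alpha\beta\gamma}$ via the transitive maps and applying the brace identity there.

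The only difference is in showing that $(S,+)$ and $(S,\circ)$ are inverse semigroups. The paper simply cites Petrich for this. You prove it directly: associativity from condition (2), regularity from componentwise inverses, and commutation of idempotents inside $S_{\alpha\beta}$. Doing so also makes explicit the identity $\phi_{\alpha,\beta}(-a)=-\phi_{\alpha,\beta}(a)$, which the paper's computation uses tacitly.
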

\begin{proof}
Initially, $(S, +)$ and $(S, \circ)$ are inverse semigroups as a consequence of \cite[Ex.(ii), p.90]{Pe84}. Moreover, if $\alpha, \beta, \gamma \in Y$ and $a \in S_\alpha$, $b \in S_\beta$, and $c \in S_\gamma$, we obtain
\begin{align*}
    a \circ (b+c)
    &=\phii{\alpha}{\alpha\beta\gamma}(a) \circ \phii{\beta\gamma}{\alpha\beta\gamma}\left(\phii{\beta}{\beta\gamma}(b)+\phii{\gamma}{\beta\gamma}(c)\right)\\
    &=\phii{\alpha}{\alpha\beta\gamma}(a) \circ \left(\phii{\beta}{\alpha\beta\gamma}(b)+\phii{\gamma}{\alpha\beta\gamma}(c)\right)\\
    &=\phii{\alpha}{\alpha\beta\gamma}(a) \circ \phii{\beta}{\alpha\beta\gamma}(b)-\phii{\alpha}{\alpha\beta\gamma}(a)+\phii{\alpha}{\alpha\beta\gamma}(a) \circ  \phii{\gamma}{\alpha\beta\gamma}(c)\\
    &=\phii{\alpha\beta}{\alpha\beta\gamma}\left(\phii{\alpha}{\alpha\beta}(a) \circ \phii{\beta}{\alpha\beta}(b)\right)-\phii{\alpha\beta\gamma}{\alpha\beta\gamma}\phii{\alpha}{\alpha\beta\gamma}(a)+\phii{\alpha}{\alpha\beta\gamma}(a) \circ \phii{\gamma}{\alpha\beta\gamma}(c)\\
    &=\phii{\alpha}{\alpha\beta}(a) \circ \phii{\beta}{\alpha\beta}(b)-\phii{\alpha}{\alpha\beta\gamma}(a)+\phii{\alpha}{\alpha\beta\gamma}(a) \circ \phii{\gamma}{\alpha\beta\gamma}(c)\\
    &=\phii{\alpha}{\alpha\beta}(a) \circ \phii{\beta}{\alpha\beta}(b)-\phii{\alpha\beta}{\alpha\beta\gamma}\phii{\alpha}{\alpha\beta}(a)+\phii{\alpha\gamma}{\alpha\beta\gamma}\left(\phii{\alpha}{\alpha\gamma}(a) \circ \phii{\gamma}{\alpha\gamma}(c)\right)\\
    &=\phii{\alpha\beta}{\alpha\beta}\left(\phii{\alpha}{\alpha\beta}(a) \circ \phii{\beta}{\alpha\beta}(b)\right)-\phii{\alpha}{\alpha\beta}(a)+\phii{\alpha}{\alpha\gamma}(a) \circ \phii{\gamma}{\alpha\gamma}(c)\\
    &=\phii{\alpha}{\alpha\beta}(a) \circ \phii{\beta}{\alpha\beta}(b) - a+\phii{\alpha}{\alpha\gamma}(a) \circ \phii{\gamma}{\alpha\gamma}(c) \\
    &=a \circ b -a+a \circ c.
\end{align*}
Then $(S, +, \circ)$ is an inverse brace.
\end{proof}

\smallskip

\begin{theor}\label{teo_solu_semil}
 Let  $S=[Y; S_\alpha;\phii{\alpha}{\beta}]$ be a strong semilattice of inverse braces, each of which satisfies \eqref{A} and \eqref{B}. Then the map $r_S:S \times S \to S  \times S$ defined as in \eqref{r_S} is a solution. Moreover, $r_S$ is the strong semilattice of the solutions $r_\alpha$ associated with each inverse brace $S_\alpha$, namely,
     	\begin{align}\label{solu_semil}
		r_S\left(a, b\right)= 
		r_{\alpha\beta}\left(\phi_{\alpha,\alpha\beta}\left(a\right),
		\phi_{\beta, \alpha\beta}\left(b\right) \right),
	\end{align}	
    for all $a\in S_{\alpha}$ and $b\in S_{\beta}$.
    \begin{proof}
        Initially, we show that the equations \eqref{A} and \eqref{B} are valid in $S$. Let $a \in S_\alpha$ and $e \in \E\left(S, \circ \right)$. Then there exists $\beta \in Y$ such that $e \in \E\left(S_\beta, \circ \right)$. Hence,  we have
        \begin{align*}
            \sigma_a(e)&=\phii{\alpha}{\alpha\beta}(a) \circ \left(\phii{\alpha}{\alpha\beta}(a^-)+\phii{\beta}{\alpha\beta}(e)\right)=\sigma^{[\alpha\beta]}_{\phii{\alpha}{\alpha\beta}(a)}\left(\phii{\beta}{\alpha\beta}(e)\right)\\
            &=\phii{\alpha}{\alpha\beta}(a)  \circ \phii{\beta}{\alpha\beta}(e) \circ \phii{\alpha}{\alpha\beta}(a^-) =a \circ e \circ a^-,
        \end{align*}
       where we use \eqref{A} for the inverse brace $S_{\alpha\beta}$. Moreover, if $b \in S_\beta$, we get
        \begin{align*}
           \sigma_a\left( b \circ b^-\right)&=\phii{\alpha}{\alpha\beta}(a) \circ \left(\phii{\alpha}{\alpha\beta}(a^-)+\phii{\beta}{\alpha\beta}\left(b \circ b^-\right)\right)=\sigma^{[\alpha\beta]}_{\phii{\alpha}{\alpha\beta}(a)}\left(\phii{\beta}{\alpha\beta}\left(b \circ b^-\right)\right)\\
           &=\sigma^{[\alpha\beta]}_{\phii{\alpha}{\alpha\beta}(a)}\left(\phii{\beta}{\alpha\beta}(b)\right) \circ \sigma^{[\alpha\beta]}_{\phii{\alpha}{\alpha\beta}(a)}\left(\phii{\beta}{\alpha\beta}(b)\right)^-=\sigma_a(b) \circ \sigma_a(b)^-,
        \end{align*}
       where, again, we use \eqref{B} for the inverse brace $S_{\alpha\beta}$.
        Thus, by \cref{teo_solu}, the map $r_S$ is a solution. Furthermore, by \cite[Theorem 4.1]{CCoSt20x-2}, to get the second claim, we have to show that
        \begin{align*}
            \left(\phi_{\alpha,\beta}\times \phi_{\alpha,\beta}\right)r_{\alpha}
		= r_{\beta}\left(\phi_{\alpha, \beta}\times \phi_{\alpha, \beta}\right), 
        \end{align*}
for all $\alpha, \beta\in Y$ such that  $\alpha \geq \beta$. This follows by
\begin{align*}
\left(\phi_{\alpha,\beta}\times \phi_{\alpha,\beta}\right)r_{\alpha}(a, a')&=  \left(\phi_{\alpha,\beta}\left(\sigma^{[\alpha]}_a\left(a'\right)\right), \phi_{\alpha,\beta}\left(\rho^{[\alpha]}_{a'}\left(a\right)\right)\right)\\
 &=\left(\sigma^{[\beta]}_{\phii{\alpha}{\beta}(a)}\left(\phii{\alpha}{\beta}\left(a'\right)\right), \rho^{[\beta]}_{\phii{\alpha}{\beta}\left(a'\right)}\left(\phii{\alpha}{\beta}\left(a\right)\right)\right),
\end{align*}
for all $a, a' \in S_\alpha$, since $\phi_{\alpha,\beta}$ is an inverse brace homomorphism.
    \end{proof}
\end{theor}

\smallskip

\begin{ex}
Let $S=\{0,x,y\}$ and $\mathbb{Z}_2=\{a,b\}$. Consider the inverse braces $(S, +, \circ)$ and $\left(\mathbb{Z}_2, +, \circ\right)$ in \cref{ex_cliff}  and in \cref{esempiowang}, respectively. Moreover, consider $Y=\{\gamma, \delta \}$ with $\gamma \geq \delta$ and set $S_\gamma:=S$ and $S_\delta:=\mathbb{Z}_2$. Then the map $\phi_{\gamma, \delta } : S_\gamma \to S_\delta$, given by
    \begin{align*}
       \phi_{\gamma, \delta }(0)=\phi_{\gamma, \delta }(x)=a, \quad \phi_{\gamma, \delta }(y)=b
    \end{align*}
    is a homomorphism from the inverse brace $S_\alpha$ into the inverse brace $S_\beta$. Hence, by \cref{teo_semila}, set $X=S_\alpha \cup S_\beta$, we have that $X:=[Y; S_{\alpha}, \phi_{\alpha, \beta}]$ is an inverse brace, whose operations are the following
       $$\begin{tabular}{c | c c c c c}
    $+$ & $0$ & $x$ & $y$  &$a$ &$b$\\
    \hline
$0$& $0$ & $x$ & $y$  &$a$ &$b$\\
    $x$ & $x$ & $x$ & $y$ & $a$  &$b$\\
    $y$& $y$ & $y$ & $x$ & $b$  &$a$\\
    $a$ & $a$& $a$& $b$& $a$  &$b$\\
    $b$& $b$& $b$& $a$& $b$  &$a$
\end{tabular} 
\qquad \quad{\rm and}\qquad \quad 
\begin{tabular}{c | c c c c c  c c}
    $\circ$ & $0$ & $x$ & $y$  &$a$ &$b$  \\
    \hline
$0$& $0$ & $x$ & $y$  &$a$ &$b$\\
    $x$ & $x$ & $x$ & $y$ & $a$  &$b$\\
    $y$& $y$ & $y$ & $y$ & $b$  &$b$\\
    $a$ & $a$& $a$& $b$& $a$  &$b$\\
    $b$& $b$& $b$& $b$& $b$  &$b$
\end{tabular}
$$
    In addition, by \cref{teo_solu_semil}, the map $r:X \times X \to X \times X$ defined as in \eqref{solu_semil} is a solution on $X$. 
\end{ex}

\medskip

\section{Final remarks and further directions}
In this brief section, we present some open problems and discuss possible research directions.

\smallskip

First, it is natural to remark that there exist inverse braces for which the map $\sigma$ is not a homomorphism. The inverse brace in \cref{nhomo} is such an example. The following provides another example.

\begin{ex} 
 Let $S=\{0, a,b, c, d\}$. Consider the inverse monoids $(S, +)$ and $(S, \circ)$ both with identity $0$ and binary operations given by 
  $$\begin{tabular}{c | c c c c c}
    $+$ & $0$ & $a$ & $b$ & $c$ & $d$ \\
    \hline
    $0$ & $0$ & $a$ & $b$ & $c$ & $d$ \\
    $a$  & $a$ & $a$ & $b$ & $c$ & $d$ \\
    $b$ & $b$ & $b$ & $a$ & $d$ & $c$ \\
    $c$ & $c$ & $c$ & $d$ & $a$ & $b$\\
    $d$ & $d$ & $d$ & $c$ & $b$ & $a$
\end{tabular} \qquad \text{and} \qquad 
\begin{tabular}{c | c c c c c } $\circ$ & $0$ & $a$ & $b$ & $c$ & $d$ 
\\ \hline
    $0$ & $0$ & $a$ & $b$ & $c$ & $d$ \\
    $a$  & $a$ & $a$ & $b$ & $c$ & $d$ \\
    $b$ & $b$ & $b$ & $b$ & $b$ &  $b$  \\
    $c$ & $c$ & $c$ & $b$ & $d$ & $a$\\
    $d$ & $d$ & $d$ & $b$ & $a$ & $c$
\end{tabular}
$$
Then $(S,+,\circ)$ is an inverse brace that does not satisfy \eqref{A}. 
Indeed, for instance, 
\[d\circ b \circ b^- \circ d^- = b
\quad \text{and} \quad 
\sigma_d\left(b \circ b^-\right)
=c.\]
\end{ex}
In all the examples considered, including computations performed with GAP up to order $5$, a map of the form in \eqref{r_S}, or of the form in \cref{teo_solu_weak} involving the $\lambda$ map, is not a solution.
These structures may be studied independently, but it is currently unknown whether they give rise to solutions written in a different form.

\medskip

Another open question is whether $(S, +)$ is always a Clifford semigroup or whether suitable additional assumptions force it to be a Clifford semigroup. For weak braces, this property necessarily holds (see \cite[Theorem 8]{CaMaMiSt22}). In the proof, the additional condition on idempotents makes the argument easier, suggesting why the result holds in this setting.

A computation performed with GAP shows that, for orders up to $5$, there are no inverse braces whose additive structure is a Brandt semigroup (cf. \cite[p. 152]{Ho95}), the only inverse semigroup of order $5$ that is not Clifford, up to isomorphism.

 \medskip

Finally, another possible research direction would be to investigate the existence of \emph{deformed solutions} associated with inverse braces. Following the approach of \cite{DoRy24, MaRySt25}, it is natural to ask whether these structures also admit a family of parameter-dependent solutions, where the parameter belongs to a distributive-like structure. In the specific, by \cite[Theorem 3.9]{MaRySt25}, in the case of a dual weak brace $S$, given an element $z \in S$, the map $r_{z}:S \times S \to S \times S$ defined by
\begin{align*}
       r_{z}(a,b)=\left(-a\circ z+a\circ b\circ z, \left(-a\circ z+a\circ b\circ z\right)^{-} \circ a \circ b \right),
    \end{align*}
    for all $a, b \in S$, is a solution if and only if $z \in \mathcal{D}_r(S)$, where
\begin{align*}
    \mathcal{D}_r(S)=\{z \in S \, \mid \, \forall \, a,b \in S \quad (a+b) \circ z=a\circ z-z+b \circ z\}.
\end{align*}
In particular, $\E(S) \subseteq \mathcal{D}_r(S)$, and hence each idempotent of $S$ gives rise to a deformation of the classical solution associated with $S$. This observation is significant because, in the case of skew braces, the only idempotent is the identity $0$ of the two groups, so no non-trivial deformation arises with $z=0$.

In the setting of inverse braces, the first step is to understand how to deform the solution given in \eqref{r_S}, and, if possible, to determine the conditions on the parameter and on the underlying structure under which the deformed map yields a solution.

\bigskip
\bibliography{bibliography}

@preamble{
   "\def\cprime{$'$} "
}

@incollection {Dr92,
    AUTHOR = {Drinfel\cprime d, V. G.},
     TITLE = {On some unsolved problems in quantum group theory},
 BOOKTITLE = {Quantum groups ({L}eningrad, 1990)},
    SERIES = {Lecture Notes in Math.},
    VOLUME = {1510},
     PAGES = {1--8},
 PUBLISHER = {Springer, Berlin},
      YEAR = {1992},
   MRCLASS = {17B37 (16W30 81R50)},
  MRNUMBER = {1183474},
MRREVIEWER = {Yvette Kosmann-Schwarzbach},
       DOI = {10.1007/BFb0101175},
       URL = {https://doi.org/10.1007/BFb0101175},
}

@article {JePi25,
    AUTHOR = {Jedlicka, Premysl and Pilitowska, Agata},
     TITLE = {Diagonals of solutions of the {Y}ang-{B}axter equation},
   JOURNAL = {Forum Math.},
  FJOURNAL = {Forum Mathematicum},
    VOLUME = {38},
      YEAR = {2026},
    NUMBER = {2},
     PAGES = {321--338},
      ISSN = {0933-7741,1435-5337},
   MRCLASS = {16T25 (08A05 20B30)},
  MRNUMBER = {5009092},
       DOI = {10.1515/forum-2024-0409},
       URL = {https://doi.org/10.1515/forum-2024-0409},
}

@article {GoWa26,
    AUTHOR = {Gong, Xiaoqian and Wang, Shoufeng},
     TITLE = {Post {C}lifford semigroups, the {Y}ang-{B}axter equation,
              relative {R}ota-{B}axter {C}lifford semigroups and dual weak
              left braces},
   JOURNAL = {Comm. Algebra},
  FJOURNAL = {Communications in Algebra},
    VOLUME = {54},
      YEAR = {2026},
    NUMBER = {1},
     PAGES = {249--273},
      ISSN = {0092-7872,1532-4125},
   MRCLASS = {16T25 (16Y99 17B38 20M18)},
  MRNUMBER = {4984587},
       DOI = {10.1080/00927872.2025.2522905},
       URL = {https://doi.org/10.1080/00927872.2025.2522905},
}

@article {LiWa25,
    AUTHOR = {Liu, Qianxue and Wang, Shoufeng},
     TITLE = {Set-theoretic solutions of the {Y}ang-{B}axter equation
              associated to weak left {$\star$}-braces},
   JOURNAL = {Semigroup Forum},
  FJOURNAL = {Semigroup Forum},
    VOLUME = {110},
      YEAR = {2025},
    NUMBER = {3},
     PAGES = {615--639},
      ISSN = {0037-1912,1432-2137},
   MRCLASS = {16T25 (20M17)},
  MRNUMBER = {4920048},
       DOI = {10.1007/s00233-025-10539-w},
       URL = {https://doi.org/10.1007/s00233-025-10539-w},
}

@article {DoRy24,
    AUTHOR = {Doikou, Anastasia and Rybolowicz, Bernard},
     TITLE = {Novel non-involutive solutions of the {Y}ang-{B}axter equation
              from (skew) braces},
   JOURNAL = {J. Lond. Math. Soc. (2)},
  FJOURNAL = {Journal of the London Mathematical Society. Second Series},
    VOLUME = {110},
      YEAR = {2024},
    NUMBER = {4},
     PAGES = {Paper No. e12999, 23},
      ISSN = {0024-6107,1469-7750},
   MRCLASS = {16T25},
  MRNUMBER = {4803940},
MRREVIEWER = {Marzia\ Mazzotta},
       DOI = {10.1112/jlms.12999},
       URL = {https://doi.org/10.1112/jlms.12999},
}

@book {Law26,
    AUTHOR = {Lawson, Mark V.},
     TITLE = {Inverse semigroups---the theory of partial symmetries},
   EDITION = {Second},
 PUBLISHER = {World Scientific Publishing Co. Pte. Ltd., Hackensack, NJ},
      YEAR = {[2026] \copyright 2026},
     PAGES = {xviii+459},
      ISBN = {[9789819816767]; [9789819816774]; [9789819816781]},
   MRCLASS = {20M18},
  MRNUMBER = {5000115},
}

@article{MaSteWi26x,
author = {Mazzotta, Marzia and Stefanelli, Paola and Wiertel, Magdalena},
title = {Quasi racks, quasi bijective and quasi non-degenerate set-theoretic solutions of the {Y}ang–{B}axter equation},
journal = {Journal of Algebra and Its Applications},
volume = {0},
number = {0},
pages = {2750198},
year = {0},
doi = {10.1142/S0219498827501982},

URL = { 
    
        https://doi.org/10.1142/S0219498827501982
    
    

},
}

@article {Xi88,
    AUTHOR = {Du, Xian Kun},
     TITLE = {The rings with regular adjoint semigroups},
   JOURNAL = {Northeast. Math. J.},
  FJOURNAL = {Northeastern Mathematical Journal. Dongbei Shuxue},
    VOLUME = {4},
      YEAR = {1988},
    NUMBER = {4},
     PAGES = {463--468},
      ISSN = {1000-1778},
   MRCLASS = {16A22 (16A30 20M17)},
  MRNUMBER = {987071},
MRREVIEWER = {Jan\ Okni\'nski},
}

@article {CaMaSt23,
    AUTHOR = {Catino, Francesco and Mazzotta, Marzia and Stefanelli, Paola},
     TITLE = {Rota-{B}axter operators on {C}lifford semigroups and the
              {Y}ang-{B}axter equation},
   JOURNAL = {J. Algebra},
  FJOURNAL = {Journal of Algebra},
    VOLUME = {622},
      YEAR = {2023},
     PAGES = {587--613},
      ISSN = {0021-8693,1090-266X},
   MRCLASS = {20M18 (16T25 81R50)},
  MRNUMBER = {4551911},
MRREVIEWER = {P.\ M.\ Higgins},
       DOI = {10.1016/j.jalgebra.2023.02.013},
       URL = {https://doi.org/10.1016/j.jalgebra.2023.02.013},
}

@misc{Wang25x,
      title={Some characterizations of weak left braces, \textnormal{preprint arXiv:2502.05861}}, 
      author={Shoufeng Wang},
      year={2025},
      eprint={2502.05861},
      archivePrefix={arXiv},
      primaryClass={math.GR},
      url={https://arxiv.org/abs/2502.05861}, 
}

@article {CaMaMiSt22,
    AUTHOR = {Catino, Francesco and Mazzotta, Marzia and Miccoli, Maria
              Maddalena and Stefanelli, Paola},
     TITLE = {Set-theoretic solutions of the {Y}ang-{B}axter equation
              associated to weak braces},
   JOURNAL = {Semigroup Forum},
  FJOURNAL = {Semigroup Forum},
    VOLUME = {104},
      YEAR = {2022},
    NUMBER = {2},
     PAGES = {228--255},
      ISSN = {0037-1912,1432-2137},
   MRCLASS = {16Y99 (16T99 20M18)},
  MRNUMBER = {4412793},
MRREVIEWER = {Bernard\ Rybo\l owicz},
       DOI = {10.1007/s00233-022-10264-8},
       URL = {https://doi.org/10.1007/s00233-022-10264-8},
}

@article {MaRySt25,
    AUTHOR = {Mazzotta, Marzia and Rybolowicz, Bernard and Stefanelli,
              Paola},
     TITLE = {Deformed solutions of the {Y}ang-{B}axter equation associated
              to dual weak braces},
   JOURNAL = {Ann. Mat. Pura Appl. (4)},
  FJOURNAL = {Annali di Matematica Pura ed Applicata. Series IV},
    VOLUME = {204},
      YEAR = {2025},
    NUMBER = {2},
     PAGES = {711--731},
      ISSN = {0373-3114,1618-1891},
   MRCLASS = {16T25 (16Y99 20M18 81R50)},
  MRNUMBER = {4883233},
       DOI = {10.1007/s10231-024-01502-7},
       URL = {https://doi.org/10.1007/s10231-024-01502-7},
}

@article {CaMaSt24,
    AUTHOR = {Catino, Francesco and Mazzotta, Marzia and Stefanelli, Paola},
     TITLE = {Solutions of the {Y}ang-{B}axter equation and strong
              semilattices of skew braces},
   JOURNAL = {Mediterr. J. Math.},
  FJOURNAL = {Mediterranean Journal of Mathematics},
    VOLUME = {21},
      YEAR = {2024},
    NUMBER = {2},
     PAGES = {Paper No. 67, 22},
      ISSN = {1660-5446,1660-5454},
   MRCLASS = {16T25 (16Y99 20M18 81R50)},
  MRNUMBER = {4718672},
MRREVIEWER = {Kun\ Hao},
       DOI = {10.1007/s00009-024-02611-6},
       URL = {https://doi.org/10.1007/s00009-024-02611-6},
}

@book {Ho95,
	AUTHOR = {Howie, John M.},
	TITLE = {Fundamentals of semigroup theory},
	SERIES = {London Mathematical Society Monographs. New Series},
	VOLUME = {12},
	NOTE = {{O}xford Science Publications},
	PUBLISHER = {The Clarendon Press, Oxford University Press, New York},
	YEAR = {1995},
	PAGES = {x+351},
	ISBN = {0-19-851194-9},
	MRCLASS = {20Mxx (20-02)},
	MRNUMBER = {1455373},
	MRREVIEWER = {P. M. Higgins},
}

@article {CeJeOk14,
    AUTHOR = {Ced\'{o}, Ferran and Jespers, Eric and Okni\'{n}ski, Jan},
     TITLE = {Braces and the {Y}ang-{B}axter equation},
   JOURNAL = {Comm. Math. Phys.},
  FJOURNAL = {Communications in Mathematical Physics},
    VOLUME = {327},
      YEAR = {2014},
    NUMBER = {1},
     PAGES = {101--116},
      ISSN = {0010-3616},
   MRCLASS = {81Q05 (16T25)},
  MRNUMBER = {3177933},
MRREVIEWER = {Gangcheng Wang},
       DOI = {10.1007/s00220-014-1935-y},
       URL = {https://doi.org/10.1007/s00220-014-1935-y},
}

@article {Ku83,
    AUTHOR = {Kunze, M.},
     TITLE = {Zappa products},
   JOURNAL = {Acta Math. Hungar.},
  FJOURNAL = {Acta Mathematica Hungarica},
    VOLUME = {41},
      YEAR = {1983},
    NUMBER = {3-4},
     PAGES = {225--239},
      ISSN = {0236-5294},
   MRCLASS = {20M10 (20M35)},
  MRNUMBER = {703736},
MRREVIEWER = {Heinrich Seidel},
       DOI = {10.1007/BF01961311},
       URL = {https://doi.org/10.1007/BF01961311},
}

@article {Ru07,
    AUTHOR = {Rump, Wolfgang},
     TITLE = {Braces, radical rings, and the quantum {Y}ang-{B}axter
              equation},
   JOURNAL = {J. Algebra},
  FJOURNAL = {Journal of Algebra},
    VOLUME = {307},
      YEAR = {2007},
    NUMBER = {1},
     PAGES = {153--170},
      ISSN = {0021-8693},
   MRCLASS = {16Y99 (16W30)},
  MRNUMBER = {2278047},
MRREVIEWER = {Gigel Militaru},
       DOI = {10.1016/j.jalgebra.2006.03.040},
       URL = {https://doi.org/10.1016/j.jalgebra.2006.03.040},
}

@article {GuVe17,
    AUTHOR = {Guarnieri, L. and Vendramin, L.},
     TITLE = {Skew braces and the {Y}ang-{B}axter equation},
   JOURNAL = {Math. Comp.},
  FJOURNAL = {Mathematics of Computation},
    VOLUME = {86},
      YEAR = {2017},
    NUMBER = {307},
     PAGES = {2519--2534},
      ISSN = {0025-5718},
   MRCLASS = {16T25 (81R50)},
  MRNUMBER = {3647970},
MRREVIEWER = {Paola Stefanelli},
       DOI = {10.1090/mcom/3161},
       URL = {https://doi.org/10.1090/mcom/3161},
}

@article {Ni83,
    AUTHOR = {Nico, William R.},
     TITLE = {On the regularity of semidirect products},
   JOURNAL = {J. Algebra},
  FJOURNAL = {Journal of Algebra},
    VOLUME = {80},
      YEAR = {1983},
    NUMBER = {1},
     PAGES = {29--36},
      ISSN = {0021-8693},
   MRCLASS = {20M10},
  MRNUMBER = {690701},
MRREVIEWER = {Yu. G. Koshelev},
       DOI = {10.1016/0021-8693(83)90015-7},
       URL = {https://doi.org/10.1016/0021-8693(83)90015-7},
}

@article{Pr86,
    AUTHOR = {Preston, G. B.},
     TITLE = {Semidirect products of semigroups},
   JOURNAL = {Proc. Roy. Soc. Edinburgh Sect. A},
  FJOURNAL = {Proceedings of the Royal Society of Edinburgh. Section A.
              Mathematics},
    VOLUME = {102},
      YEAR = {1986},
    NUMBER = {1-2},
     PAGES = {91--102},
      ISSN = {0308-2105},
   MRCLASS = {20M10},
  MRNUMBER = {837162},
MRREVIEWER = {John M. Howie},
       DOI = {10.1017/S0308210500014505},
       URL = {https://doi.org/10.1017/S0308210500014505},
}

@article {CCoSt20,
    AUTHOR = {Catino, Francesco and Colazzo, Ilaria and Stefanelli, Paola},
     TITLE = {The matched product of set-theoretical solutions of the
              {Y}ang-{B}axter equation},
   JOURNAL = {J. Pure Appl. Algebra},
  FJOURNAL = {Journal of Pure and Applied Algebra},
    VOLUME = {224},
      YEAR = {2020},
    NUMBER = {3},
     PAGES = {1173--1194},
      ISSN = {0022-4049},
   MRCLASS = {16T25 (16N20 16Y99 81R50)},
  MRNUMBER = {4009573},
       DOI = {10.1016/j.jpaa.2019.07.012},
       URL = {https://doi.org/10.1016/j.jpaa.2019.07.012},
}

@article {CCoSt20x-2,
    AUTHOR = {Catino, Francesco and Colazzo, Ilaria and Stefanelli, Paola},
     TITLE = {Set-theoretic solutions to the {Y}ang--{B}axter equation and
              generalized semi-braces},
   JOURNAL = {Forum Math.},
  FJOURNAL = {Forum Mathematicum},
    VOLUME = {33},
      YEAR = {2021},
    NUMBER = {3},
     PAGES = {757--772},
      ISSN = {0933-7741},
   MRCLASS = {16T25 (16N20 16Y99 81R50)},
  MRNUMBER = {4250483},
       DOI = {10.1515/forum-2020-0082},
       URL = {https://doi.org/10.1515/forum-2020-0082},
}

@book{Pe84,
    AUTHOR = {Petrich, Mario},
     TITLE = {Inverse semigroups},
    SERIES = {Pure and Applied Mathematics (New York)},
      NOTE = {A Wiley-Interscience Publication},
 PUBLISHER = {John Wiley \& Sons, Inc., New York},
      YEAR = {1984},
     PAGES = {x+674},
      ISBN = {0-471-87545-7},
   MRCLASS = {20-02 (20Mxx)},
  MRNUMBER = {752899},
MRREVIEWER = {N. R. Reilly},
}

@article {Ya67,
    AUTHOR = {Yang, C. N.},
     TITLE = {Some exact results for the many-body problem in one dimension
              with repulsive delta-function interaction},
   JOURNAL = {Phys. Rev. Lett.},
  FJOURNAL = {Physical Review Letters},
    VOLUME = {19},
      YEAR = {1967},
     PAGES = {1312--1315},
      ISSN = {0031-9007},
   MRCLASS = {81.20},
  MRNUMBER = {261870},
MRREVIEWER = {S. Deser},
       DOI = {10.1103/PhysRevLett.19.1312},
       URL = {https://doi.org/10.1103/PhysRevLett.19.1312},
}

@article {Ba72,
    AUTHOR = {Baxter, Rodney J.},
     TITLE = {Partition function of the eight-vertex lattice model},
   JOURNAL = {Ann. Physics},
  FJOURNAL = {Annals of Physics},
    VOLUME = {70},
      YEAR = {1972},
     PAGES = {193--228},
      ISSN = {0003-4916},
   MRCLASS = {82.46},
  MRNUMBER = {290733},
MRREVIEWER = {S. Sherman},
       DOI = {10.1016/0003-4916(72)90335-1},
       URL = {https://doi.org/10.1016/0003-4916(72)90335-1},
}

@article {CaMaSt21,
    AUTHOR = {Catino, Francesco and Mazzotta, Marzia and Stefanelli, Paola},
     TITLE = {Inverse semi-braces and the {Y}ang-{B}axter equation},
   JOURNAL = {J. Algebra},
  FJOURNAL = {Journal of Algebra},
    VOLUME = {573},
      YEAR = {2021},
     PAGES = {576--619},
      ISSN = {0021-8693},
       DOI = {10.1016/j.jalgebra.2021.01.009},
       URL = {https://doi.org/10.1016/j.jalgebra.2021.01.009},
}

@book{La98,
    AUTHOR = {Lawson, Mark V.},
     TITLE = {Inverse semigroups},
      NOTE = {{T}he theory of partial symmetries},
 PUBLISHER = {World Scientific Publishing Co., Inc., River Edge, NJ},
      YEAR = {1998},
     PAGES = {xiv+411},
      ISBN = {981-02-3316-7},
   MRCLASS = {20M18 (18B40)},
  MRNUMBER = {1694900},
MRREVIEWER = {Peter R. Jones},
       DOI = {10.1142/9789812816689},
       URL = {https://doi.org/10.1142/9789812816689},
}

\bigskip

\end{document}